\documentclass[a4paper,twoside,10pt]{article}

\usepackage[a4paper,left=3cm,right=3cm, top=3cm, bottom=3cm]{geometry}
\usepackage[latin1]{inputenc}
\usepackage{mathrsfs}
\usepackage{graphicx}
\usepackage{epstopdf}
\epstopdfsetup{
    suffix=,
}
\usepackage{amsmath}
\usepackage{amsthm}
\usepackage{amssymb}
\usepackage{hyperref}
\usepackage{stmaryrd}
\usepackage{float}
\usepackage{bigints}
\usepackage{cite}
\usepackage{color}
\usepackage[abs]{overpic}
\usepackage[font=footnotesize,labelfont=bf]{caption}
\usepackage{cases}
\usepackage{tikz}
\usepackage{algorithmic}
\usepackage{rotating}
\usepackage{blkarray}
\usetikzlibrary{matrix,calc,arrows}
\usepackage[author={Lorenzo}]{pdfcomment}
\usepackage{soul,xcolor}
\usepackage{enumitem}  
\usepackage{verbatim}
\usepackage{graphicx}
\usepackage{subcaption}
\usepackage{mwe}
\usepackage{algorithm}

\restylefloat{table}
\theoremstyle{plain}
\newtheorem{thm}{Theorem}[section]
\newtheorem{cor}[thm]{Corollary}
\newtheorem{lem}[thm]{Lemma}
\newtheorem{prop}[thm]{Proposition}
\theoremstyle{definition}

\theoremstyle{remark}
\newtheorem{remark}{Remark}

\newcommand{\C}{\mathbb{C}}
\newcommand{\R}{\mathbb{R}}
\newcommand{\N}{\mathbb{N}}


\newcommand{\im}{\textup{i}} 
\newcommand{\E}{K} 
\newcommand{\taun}{\mathcal T_n} 
\newcommand{\e}{e} 
\newcommand{\Pip}{\Pi_{\p}^{\E}} 
\newcommand{\Pie}{\Pi^{0,\e}_\p} 
\newcommand{\Pib}{\Pi^{0,\partial \Omega}_\p} 
\newcommand{\SE}{S^\E} 
\newcommand{\un}{u_h} 
\newcommand{\vn}{v_h} 
\newcommand{\w}{w} 
\newcommand{\zn}{z_h} 
\renewcommand{\k}{k} 
\newcommand{\n}{\mathbf n} 
\renewcommand{\Re}{\text{Re}} 
\renewcommand{\a}{a} 
\renewcommand{\b}{b} 
\newcommand{\aE}{\a^\E} 
\newcommand{\anE}{\aE_\h} 
\newcommand{\an}{\a_\h} 
\newcommand{\bn}{b_\h} 
\newcommand{\fn}{F_\h} 
\newcommand{\gn}{\g_\h} 
\newcommand{\PW}{\mathbb {PW}} 
\newcommand{\PWtaun}{\mathbb {PW}_\p(\taun)} 
\newcommand{\PWc}{\mathbb {PW}_\p^c} 
\newcommand{\PWE}{\mathbb {PW}_\p(\E)} 
\newcommand{\PWDe}{\mathbb {PW}_\p(\De)} 
\newcommand{\V}{V} 
\newcommand{\Vnp}{V_{\h}} 
\newcommand{\VE}{V_\h(\E)} 
\newcommand{\wlE}{w_\ell^{\E}} 
\newcommand{\wle}{w_\ell^e} 
\newcommand{\wje}{w_j^e} 
\newcommand{\wEDiamond}{w^{\E^{\pm}}} 
\newcommand{\dOmega}{d_\Omega} 
\newcommand{\h}{h} 
\newcommand{\hE}{\h_\E} 
\newcommand{\hDe}{\h_{\De}} 
\newcommand{\he}{\h_e} 
\newcommand{\p}{p} 
\newcommand{\pe}{p_e} 
\newcommand{\pE}{p_\E} 
\newcommand{\uPW}{w^{\taun}} 
\newcommand{\uI}{u_I} 
\newcommand{\gammaIK}{\gamma_I^K} 
\newcommand{\deltan}{\delta_\h} 
\newcommand{\Nn}{\mathcal N_\h} 
\newcommand{\g}{g} 
\newcommand{\psiE}{\psi^\E} 
\newcommand{\psiPW}{\psi^{\taun}} 
\newcommand{\psiI}{\psi_I} 
\newcommand{\Hnc}{H^{1,nc}(\taun)} 
\newcommand{\En}{\mathcal E_n} 
\newcommand{\Enb}{\mathcal E_n^B} 
\newcommand{\EnI}{\mathcal E_n^I} 
\newcommand{\EE}{\mathcal E^{\E}} 
\newcommand{\x}{\textbf{\textup{x}}} 
\newcommand{\xE}{\textbf{\textup{x}}_\E} 
\newcommand{\xe}{\textbf{\textup{x}}_{e}} 
\newcommand{\nc}{nc} 
\newcommand{\dstar}{\textbf{\textup{d}}_*} 
\newcommand{\dl}{\textbf{\textup{d}}_\ell} 
\newcommand{\djj}{\textbf{\textup{d}}_j} 
\newcommand{\cPW}{c_{PW}} 
\newcommand{\cBA}{c_{BA}} 
\newcommand{\cE}{c_{\E}} 
\newcommand{\ctilde}{\widetilde c} 
\newcommand{\ncTVEM}{nonconforming Trefftz-VEM} 
\newcommand{\ncTVE}{nonconforming Trefftz-VE} 
\newcommand{\alphah}{\alpha_\h}

\newcommand{\card}{\textup{card}} 

\newcommand{\CT}{C_T} 
\newcommand{\CP}{C_P} 
\newcommand{\CTP}{C_0} 
\newcommand{\CDelta}{C_\Delta} 
\newcommand{\ds}{\text{d}s}
\newcommand{\dx}{\text{d}x}
\newcommand{\lin}{\textup{span}}
\newcommand{\xbold}{\mathbf x} 

\newcommand{\J}{\mathcal{J}} 
\newcommand{\Je}{\mathcal{J}_e} 
\newcommand{\Jeprime}{\mathcal{J}_{e}'}

\newcommand{\dof}{\textup{dof}} 

\newcommand{\nE}{\textbf{\textup{n}}_\E} 
\newcommand{\normale}{\textbf{\textup{n}}_\e} 

\newcommand{\wE}{w^\E} 
\newcommand{\wDe}{w^{\De}} 
\newcommand{\we}{w^\e} 
\newcommand{\wb}{w^{\partial \E}} 

\newcommand{\De}{D_\e} 


\setcounter{secnumdepth}{4}
\setcounter{tocdepth}{4}

\begin{tiny}
\author{
\normalsize{
}}
\end{tiny}

\date{}

\title{A nonconforming Trefftz virtual element method for the Helmholtz problem}
\date{}
\author{Lorenzo Mascotto\thanks{Faculty of Mathematics, University of Vienna, 1090 Vienna, Austria (lorenzo.mascotto@univie.ac.at, ilaria.perugia@univie.ac.at, alex.pichler@univie.ac.at)},\ Ilaria Perugia\footnotemark[1],\ 
Alexander Pichler\footnotemark[1]}

\begin{document}
\maketitle

\begin{abstract}
We introduce a novel virtual element method (VEM) for the two dimensional Helmholtz problem endowed with impedance boundary conditions.
Local approximation spaces consist of Trefftz functions,
i.e., functions belonging to the kernel of the Helmholtz operator. 
The global trial and test spaces are not fully discontinuous, but rather interelement continuity is imposed
in a nonconforming fashion. Although their functions are only implicitly defined, as typical of the VEM framework, they contain discontinuous subspaces made of functions known in closed form and with good approximation properties (plane waves, in our case).
We carry out an abstract error analysis of the method, and derive $\h$-version error estimates. Moreover, we initiate its numerical investigation by presenting a first test, which demonstrates the theoretical convergence rates.
	
\medskip\noindent
\textbf{AMS subject classification}: 35J05, 65N12, 65N15, 65N30
	
\medskip\noindent
\textbf{Keywords}: virtual element method, Trefftz methods, nonconforming methods, Helmholtz problem, plane waves, polygonal meshes
\end{abstract}

\section{Introduction} \label{section introduction}
The virtual element method (VEM) is a recent generalization of the
finite element method (FEM) to polytopal grids  \cite{VEMvolley,
  hitchhikersguideVEM}. It
has been investigated in connection with a widespread number of
problems and engineering applications, a short list of them being
\cite{vacca2018h, BLV_StokesVEMdivergencefree, VEMchileans, hpVEMbasic, Berrone-VEM, BrennerSung_VEMsmallfaces}.
In particular, 
VEM where the continuity constraints are imposed in a nonconforming way have been the object of an extensive study
\cite{nonconformingVEMbasic, cangianimanzinisutton_VEMconformingandnonconforming, CGM_nonconformingStokes, gardini2018nonconforming, VEM_fullync_biharmonic, zhao2016nonconforming, nc_VEM_NavierStokes, hpHarmonicVEMnc}.

The main feature of VEM is that test and trial spaces consist of functions that are not known in closed form, but that are solutions to local differential problems mimicking the target one.
Despite this fact, the method is made fully computable by defining two
tools, namely suitable mappings from local approximation spaces into spaces of known functions (typically polynomials), and suitable bilinear/sesquilinear stabilization forms.

For the Helmholtz problem, a virtual version of the classical partition of unity method~\cite{BabuskaMelenk_PUMintro} was introduced in~\cite{Helmholtz-VEM}.
That method is based on discrete approximation functions given by the product of low order harmonic VE functions with plane waves.

In this work, we present a novel VE approach for the Helmholtz equation, which differs from the plane wave VEM of \cite{Helmholtz-VEM} in the two following aspects: in our method
\begin{itemize}
\item local test and trial spaces consist of functions that belong to the kernel of the Helmholtz operator;
\item no global $H^1$-conformity in the approximation space is required; instead, zero jumps of Dirichlet traces across interfaces are imposed in a nonconforming fashion.
\end{itemize}
This new method, which will be referred to as nonconforming Trefftz-VEM, does not fall into the partition of unity setting, but rather into the Trefftz one.
On the other hand, it also differs from the Trefftz methods in the Helmholtz literature, which typically employ fully discontinuous trial and test functions;
this is the case of the ultra weak variational formulation \cite{cessenatdespres_basic},
plane wave discontinuous Galerkin methods \cite{TDGPW_pversion}, 
discontinuous methods based on Lagrange multipliers \cite{farhat2001discontinuous},
wave based methods \cite{wavebasedmethod_overview}, 
the least square formulation \cite{monk1999least}, and of the variational theory of complex rays \cite{riou2008multiscale}; see \cite{PWDE_survey} for a survey.

The nonconforming approach adopted here provides an elegant
theoretical framework, where the best approximation error for
Trefftz-VE functions can be bounded in terms of the best approximation error for piecewise discontinuous Trefftz functions. 
Such property is not valid, at the best of our understanding, when employing conforming spaces.
This extends a results of \cite{hpHarmonicVEMnc}, where the best approximation error for nonconforming harmonic VE functions
was bounded by that for piecewise discontinuous harmonic polynomials.
In this sense, the nonconforming approach in the Trefftz-VE technology
provides a common framework for problems of different nature.

The \ncTVEM\,thus extends the nonconforming harmonic VEM of \cite{hpHarmonicVEMnc}, which in turn was a nonconforming version of the harmonic VEM of \cite{conformingHarmonicVEM}, to the Helmholtz problem.
It has the advantages of Trefftz methods, as it reduces considerably the number of degrees of freedom needed for achieving a given accuracy as compared to standard polynomial methods.
At the same time, it makes use of basis functions with some sort of continuity.
To be more precise, the impedance traces of the functions in the \ncTVE\,spaces 
at the boundaries of the mesh elements are prescribed to be traces of plane waves, and
the degrees of freedom are chosen to be Dirichlet moments on each edge with respect to plane waves; this allows to build global spaces with continuity of such Dirichlet moments.
In this way, information regarding the behavior of the discrete solution on the mesh skeleton can be recovered.
As compared to the partition of unity approach~\cite{BabuskaMelenk_PUMintro}, the \ncTVEM\, neither needs to have at disposal an explicit partition of unity nor requires volume quadrature formulas.
This however comes at the price of substituting the original sesquilinear form with a computable one.

In the construction of the method, we start with a larger number of degrees of freedom than for other Trefftz methods, e.g. plane wave discontinuous Galerkin method.
However, as the basis functions are associated with the mesh edges, an edgewise or\-tho\-go\-na\-li\-za\-tion-and-fil\-te\-ring process, as described in~\cite{TVEM_Helmholtz_num},
allows to significantly reduce the number of degrees of freedom without deteriorating the accuracy.
The numerical experiments presented in~\cite{TVEM_Helmholtz_num} show that the nonconforming approach becomes in this way competitive with other Trefftz methods.

\medskip\noindent
The structure of the paper is the following.
After presenting the model problem and some notation, we introduce in Section~\ref{section continuous problem} the functional setting and we describe in Section~\ref{section definition local space} the \ncTVE\,method.
An abstract error analysis and $\h$-version error estimates are
derived in Section~\ref{section a priori error analysis}. 
Finally, we present a numerical test in Section~\ref{section numerical results} and we state some conclusions in Section~\ref{section conclusions}. 

We refer to~\cite{TVEM_Helmholtz_num} a wide set of numerical experiments of the $\h$-, the $\p$-, and the $\h\p$-versions of the method, the comparison with other methods, and the description of its implementation aspects,
including the or\-tho\-go\-na\-li\-za\-tion-and-fil\-te\-ring process.

\paragraph*{Model problem.}
The model problem we consider is the following.
Given a bounded convex polygon $\Omega \subset \R^2$ with boundary $\partial \Omega$, and $\g\in H^{-\frac{1}{2}}(\partial \Omega)$, we consider the homogeneous Helmholtz problem with impedance boundary condition
\begin{equation} \label{HH continuous problem}
\left\{
\begin{alignedat}{2}
-\Delta u - \k^2 u &= 0 &&\quad \text{ in } \Omega, \\
\nabla u \cdot \textbf{\textup{n}}_\Omega + \im\k u &= \g &&\quad \text{ on } \partial \Omega, 
\end{alignedat}
\right.
\end{equation}
where $k>0$ is the wave number, $\im$ is the imaginary unit, and $\n_\Omega$ denotes the unit normal vector on $\partial \Omega$ pointing outside $\Omega$.

The corresponding variational formulation reads
\begin{equation} \label{HH problem weak formulation}
\begin{cases}
\text{find } u \in \V \text{ such that}\\
\b(u,v) = \int_{\partial \Omega} g \overline{v} \, \ds \quad  \forall v \in \V,\\
\end{cases}
\end{equation} 
where $\V:=H^1(\Omega)$ and where
\begin{equation} \label{sesquilinearform b}
\b(u,v) := \a(u,v) + \im\k \int_{\partial \Omega} u \overline{v} \, \ds\quad \forall u,\,v\in\V,
\end{equation} 
with
\begin{equation*}
\a(u,v):=\int_{\Omega} \nabla u \cdot \overline{\nabla v} \, \dx - \k^2 \int_{\Omega} u \overline{v} \, \dx\quad \forall u,\,v\in\V.
\end{equation*}
Problem~\eqref{HH problem weak formulation} is well-posed for all wave numbers $k$ and, due to the convexity assumption on $\Omega$, $u \in H^2(\Omega)$, if we assume in addition $\g \in H^{\frac{1}{2}}(\partial \Omega)$, see e.g. \cite[Proposition 8.1.4]{melenk_phdthesis}.

\paragraph*{Notation.}
We will employ the standard notation for Sobolev spaces, norms, seminorms, and sesquilinear forms with values in the complex field $\mathbb C$.
More precisely, given $s\in \mathbb N$ and $D\subset\mathbb R^d$, $d \in~\mathbb N$, $H^s(D,\mathbb C) =: H^s(D)$ denotes the space of Lebesgue measurable functions with $s$ square integrable weak derivatives.
Sobolev spaces with fractional order can be defined, e.g. via interpolation theory \cite{Triebel}.
The standard norms, seminorms, and inner products are denoted, respectively, by
\[
\Vert \cdot \Vert_{s,D},\quad\quad \vert \cdot \vert_{s,D},\quad\quad (\cdot, \cdot)_{s,D}.
\]
We highlight separately the definition of the $H^{\frac{1}{2}}(\partial D)$ inner product:
\[
(u,v)_{\frac{1}{2}, \partial D} = (u,v)_{0,\partial D} + \int_{\partial D} \int_{\partial D} \frac{(u(\boldsymbol{\xi})- u(\boldsymbol \eta))  \overline{(v(\boldsymbol \xi)- v(\boldsymbol \eta))} }{\vert \boldsymbol \xi - \boldsymbol \eta \vert^2}d \boldsymbol \xi\, d \boldsymbol\eta,
\]
where $\vert \cdot \vert$ denotes the Euclidean distance.

Moreover, given $r\in \mathbb R$, we denote by~$\mathbb R_{>r}$ and~$\mathbb R_{\ge r}$ the set of all real numbers that are greater than, and greater than or equal to~$r$, respectively;
in addition, given $m \in \mathbb N$, we denote by~$\mathbb N_{\ge m}$ the set of all natural numbers that are greater than or equal to $m$.
We define by~$B_r(\mathbf{x}_0)$ the ball centered at $\mathbf{x}_0 \in \mathbb R^2$ and with radius $r$.

Finally, we highlight that, given two positive quantities~$a$ and~$b$, we write~$a \lesssim b$ in lieu of~$a \le c\, b$ for some positive constant~$c$ independent of the discretization parameters and on the problem data.

\section{The functional setting} \label{section continuous problem}
In this section, we discuss some tools which are instrumental for the construction of the method.
More precisely, we firstly introduce the concept of regular polygonal decompositions of the physical domain in Section \ref{subsection regular polygonal decompositions}.
In Section \ref{subsection broken Sobolev spaces and pw spaces}, we recall some functional inequalities and we define broken Sobolev spaces and plane wave spaces.

\subsection{Regular polygonal decompositions and assumptions} \label{subsection regular polygonal decompositions}
We introduce here the concept of regular polygonal decompositions of the physical domain $\Omega$.

Given $\{\taun\}_{n \in \N}$ a sequence of polygonal decompositions of $\Omega$,
for every $n \in \N$ we denote by $\En$, $\EnI$ and $\Enb$ the set of edges, interior edges, and boundary edges, respectively. Moreover, for any polygon $\E \in \taun$, we denote by $\EE$ the set of its edges and we define
\begin{equation*}
\hE:=\textup{diam}(\E),  \quad n_\E:=\card(\EE).
\end{equation*}
We also define the local sesquilinear form
\begin{equation} \label{definition aE}
\aE(u,v):=\int_{\E} \nabla u \cdot \overline{\nabla v} \, \dx - \k^2 \int_{\E} u \overline{v} \, \dx \quad \forall u,v \in H^1(\E).
\end{equation}
Note that
\[
\a(u,v) = \sum_{\E \in \taun} \aE(u,v)\quad \forall u,\,v\in \V.
\]
Finally, given any $\e\in\EE$, we denote by $\he$ its length. 

For a given mesh, we define its mesh size by
\begin{equation*}
h:=\underset{\E \in \taun}{\max} \, \hE,
\end{equation*}
A sequence of polygonal decompositions $\{\taun\}_{n \in \N}$ is said to be {\it regular} if the following geometric assumptions are satisfied:
\begin{itemize}
\item[(\textbf{G1})] ({\it uniform star-shapedness}) there exist $\rho \in (0,\frac{1}{2}]$, $0<\rho_0 \le \rho$, such that, for all $n\in \mathbb N$ and for all $\E \in \taun$,
there exist points $\mathbf{x}_{0,\E}\in \E$ for which
the ball $B_{\rho \hE}(\mathbf{x}_{0,\E})$ is contained in $\E$, and $\E$ is star-shaped with respect to $B_{\rho_0 \hE}(\mathbf{x}_{0,\E})$;
\item[(\textbf{G2})] ({\it uniformly non-degenerating edges}) for all $n\in \mathbb N$ and for all $\E \in \taun$, it holds $\he \ge \rho_0 \hE$ for all edges $\e$ of $\E$, where $\rho_0$ is the same constant as in (\textbf{G1});
\item[(\textbf{G3})] ({\it uniform boundedness of the number of edges}) there exists a constant $\Lambda \in \mathbb N$ such that, for all $n\in \mathbb N$ and for all $\E \in \taun$, $n_\E \le \Lambda$, i.e., the number of edges of each element is uniformly bounded.
\end{itemize}

\subsection{Broken Sobolev spaces and plane wave spaces} \label{subsection broken Sobolev spaces and pw spaces}

Given $\E \in \taun$, we denote the impedance trace of a function $v \in H^1(K)$ on $\partial \E$ by 
\begin{equation} \label{def impedance trace}
\gammaIK(v):=\nabla v \cdot \nE +\im \k v,
\end{equation}
where $\nE$ is the unit normal vector on $\partial \E$ pointing outside $\E$.
We recall the following {\it trace inequality}, see e.g. \cite{gagliardo1957}:
\begin{align} 
\lVert v \rVert^2_{0,\partial \E} &\le \CT \left(\hE^{-1} \lVert v \rVert^2_{0,\E} + \hE \lvert v \rvert^2_{1,\E} \right) \quad \forall v \in H^1(\E) \label{trace inequality},
\end{align}
where $\CT$ depends solely on the shape of $\E$.
Further, we recall the following Poincar\'{e}-Friedrichs inequalities, see e.g. \cite{brenner2003poincare}:
\begin{align}
&\lVert \xi \rVert_{0,\E} \le \CP \hE \left( \lvert \xi \rvert_{1,\E} + \text{meas}(\Upsilon)^{-1}\left| \int_{\Upsilon} \xi \, \ds \right| \right) \quad \forall \xi \in H^1(\E), \label{Poincare Friedrich partial}\\
&\lVert \xi \rVert_{0,\E} \le \CP \hE \left( \lvert \xi \rvert_{1,\E} + \hE^{-2}\left| \int_{\E} \xi \, \dx \right| \right) \quad \forall \xi \in H^1(\E), \label{Poincare Friedrich full}
\end{align}
where~$\Upsilon$ is a measurable subset of~$\partial \E$ with 1D positive measure, and $\CP>0$ depends only on the shape of $\E$.

\begin{remark}
The constants appearing in the inequalities~\eqref{trace inequality},~\eqref{Poincare Friedrich partial}, and~\eqref{Poincare Friedrich full}, depend on the shape of the domain $\E$.
It can be proven that, if such inequalities are applied to the elements in the polygonal decompositions $\taun$ above, then these constants depend solely on
the parameters~$\rho$ and~$\Lambda$ introduced in the assumptions (\textbf{G1})-(\textbf{G2})-(\textbf{G3}), see \cite{MascottoPhDthesis} .
For the sake of simplicity, we will avoid to mention such a dependence, whenever it is clear from the context.
\end{remark}

Next, we define the broken Sobolev spaces of order $s$, for all $s\in \N$, subordinated to a decomposition $\taun$, 
\begin{equation*}
H^s(\taun):=\prod_{\E \in \taun} H^s(\E) = \{ v\in L^2(\Omega): v_{|_\E} \in H^s(\E) \quad \forall \E \in \taun \},
\end{equation*}
together with the corresponding broken Sobolev seminorms and the weighted broken Sobolev norms
\begin{equation*}
\lvert v \rvert^2_{s,\taun}:=\sum_{\E \in \taun} |v|^2_{s,\E}, \quad
\lVert v \rVert_{s,\k,\taun}^2:=\sum_{\E \in \taun} \lVert v \rVert_{s,\k,\E}^2,
\end{equation*}
respectively, where, for every $\E \in \taun$,
\begin{equation} \label{definition local weighted Sobolev norm}
\lVert v \rVert_{s,k,K}^2:=\sum_{j=0}^s \k^{2(s-j)} \lvert v \rvert_{j,K}^2.
\end{equation}
Now, we define the plane wave spaces. To this purpose, given $p=2q+1$ for some $q \in \N$, we introduce the set of indices $\J:=\{1,\dots,p\}$ and the set of pairwise different normalized directions $\{ \dl \}_{\ell \in \J}$.
For every $\E \in \taun$ and $\ell \in \J$, we define the plane wave traveling along the direction~$\dl$ as
\begin{equation} \label{plane wave bulk}
\wlE(\x):=e^{\im\k \dl \cdot (\x-\xE)}{}_{|_{\E}},
\end{equation}
and we define the local plane wave space on the element $\E \in \taun$ by
\begin{equation} \label{plane wave bulk space}
\PWE:=\lin \left\{ \wlE \, , \, \ell \in \J \right\}.
\end{equation}
We make the following assumption on the plane wave directions:
\begin{itemize}
\item[(\textbf{D1})] ({\it minimum angle}) there exists a constant $0<\delta \le 1$ with the property that the directions $\{\dl\}_{\ell \in \J}$ are such that
the minimum angle between two directions is larger than or equal to $\frac{2 \pi}{p} \delta$, and the angle between two neighbouring directions is strictly smaller than $\pi$. 
\end{itemize}


The global discontinuous plane wave space with uniform $\p$ is given by 
\begin{equation*}
\PWtaun:=\prod_{\E \in \taun} \PWE=\{  v \in L^2(\Omega) : v_{|_\E}\in \PWE \quad \forall \E\in \taun \}.
\end{equation*}
For the same $\p$, we also introduce the spaces of traces of plane waves on the mesh edges. Given $e \in \En$, we define, for any $\ell \in \J$,
\begin{equation} \label{plane wave edge}
\wle(\x):=e^{\im\k \dl \cdot (\x-\xe)}\,_{|_\e}.
\end{equation}
We observe that, while the dimension of $\PWE$ is equal to $\p$ for all $\E\in \taun$, the dimension of $\lin \{\wle,\, \ell\in \J\}$ could in principle be smaller. In fact, if
\begin{equation} \label{filter_rel}
\djj \cdot (\x-\xe) =\dl \cdot (\x - \xe) \quad \forall \x \in \e
\end{equation}
for some $j,\ell \in \{1,\dots,p\}$, $j>\ell$, then $\wje(\x)=\wle(\x)$.

Thus, we have to check for all the indices in $\J$ whether~\eqref{filter_rel} is satisfied. Whenever this is the case, we remove, without loss of generality, the index $j$ from $\J$.
This procedure is defined as the \emph{filtering process}.
We denote by $\Jeprime$ the resulting set of indices after the \emph{filtering process}. Clearly, it holds $\card(\Jeprime) \le p$.

In addition to plane waves, in the forthcoming analysis (see Section \ref{section a priori error analysis}) we will need to employ constant functions on the edges. 
To this purpose, we observe that, if there exists a direction $\dstar  \in \{\dl\}_{\ell \in \J}$ such that
\begin{equation} \label{condition adding constant}
\dstar \cdot (\x-\xe) =0 \quad \forall \x \in \e,
\end{equation}
that is, $\dstar$ is orthogonal to the edge $\e$, then $\lin\{\wle,\, \ell\in \Jeprime\}$ already contains the constant functions. Therefore, we proceed as follows.

If there exists an index $t \in \Jeprime$ such that $\mathbf{d}_t$ fulfils~\eqref{condition adding constant}, we simply set $\Je:=\Jeprime$; otherwise, we define $\Je:=\Jeprime \cup \{ p+1 \}$ and set $w_{p+1}^e(\x):=1$. Finally, we set $\pe:=\card(\Je)$.

With these definitions of the set of indices $\Je$ and of the corresponding functions $\wle$ on each edge $e \in \En$, we define the plane wave trace space of dimension $\pe$ as
\begin{equation} \label{edge space with constant}
\PWc(e):=\lin \left\{ \wle \, , \, \ell \in \Je \right\},
\end{equation}
where the superscript $c$ indicates that the space includes the constants.

We denote the space of piecewise discontinuous traces over $\partial \E$ as
\begin{equation} \label{boundary space with constants}
\PWc(\partial \E) = \{\wb\in L^2(\partial \E) : \wb{}_{|_{\e}} \in \PWc(e) \quad \forall \e\in \EE\}.
\end{equation}
In Figure \ref{fig:directions after filtering}, we depict the \emph{filtering process} applied to all the possible configurations along a given edge $\e \in \EE$.
\begin{figure}[h]
\centering
\begin{subfigure}[b]{0.4\textwidth}
\centering
\includegraphics[width=0.8\textwidth]{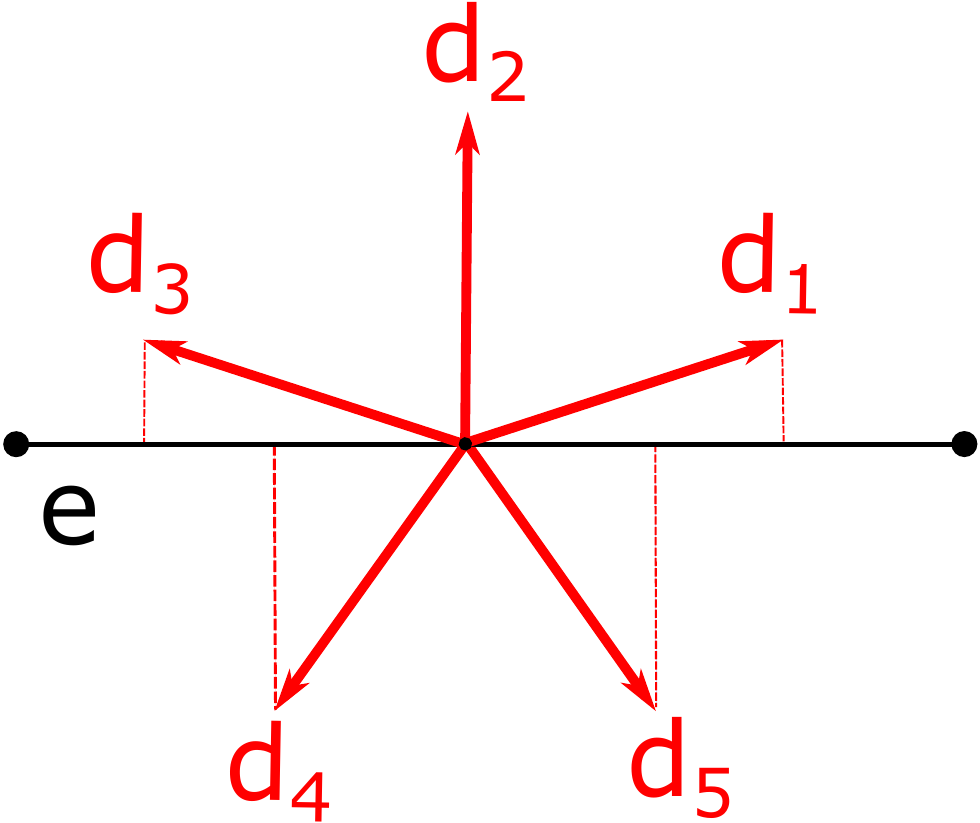}
\caption[]
{{\small No direction eliminated, orthogonal direction already included.}}    
\end{subfigure}
\hfill
\begin{subfigure}[b]{0.4\textwidth}  
\centering 
\includegraphics[width=0.8\textwidth]{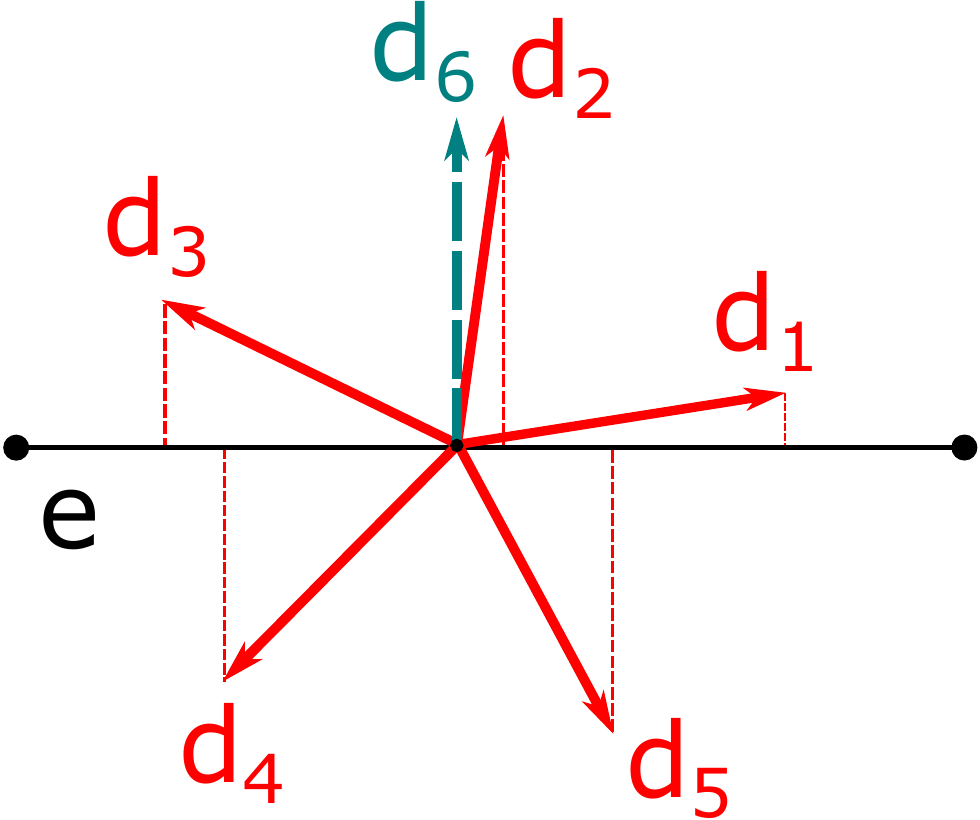}
\caption[]%
{{\small No direction eliminated, orthogonal direction not yet included.}}    
\end{subfigure}
\vskip\baselineskip
\begin{subfigure}[b]{0.4\textwidth}   
\centering 
\includegraphics[width=0.8\textwidth]{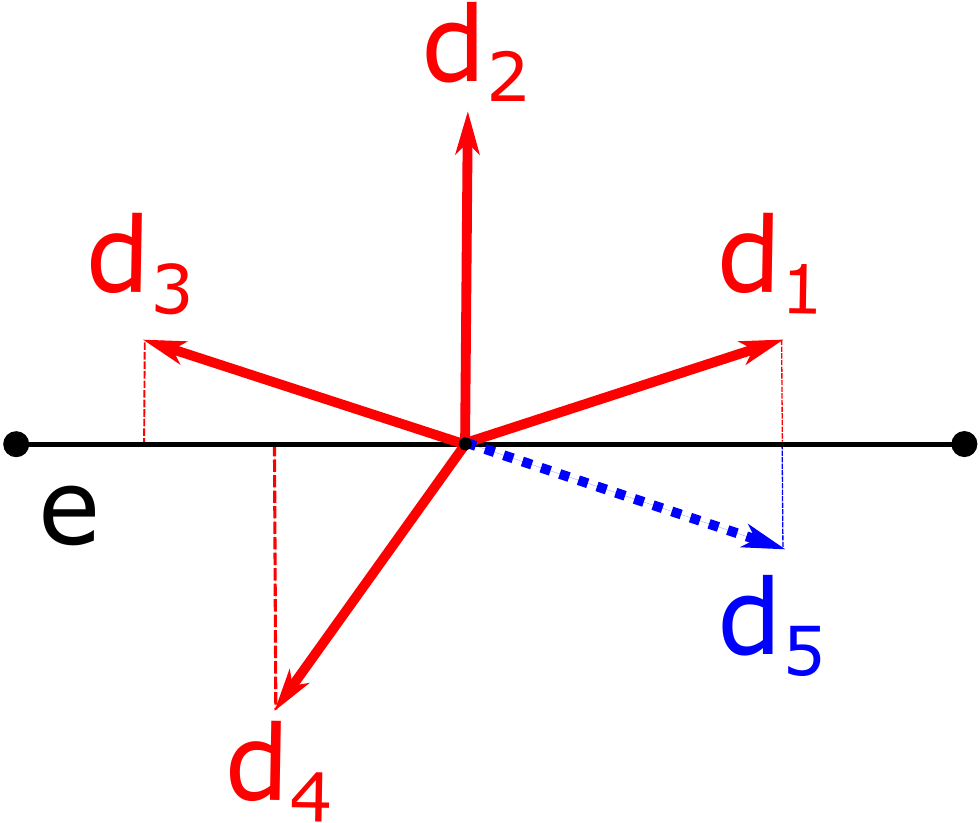}
\caption[]%
{{\small One direction eliminated, orthogonal direction already included.}}    
\end{subfigure}
\hfill
\begin{subfigure}[b]{0.4\textwidth}   
\centering 
\includegraphics[width=0.8\textwidth]{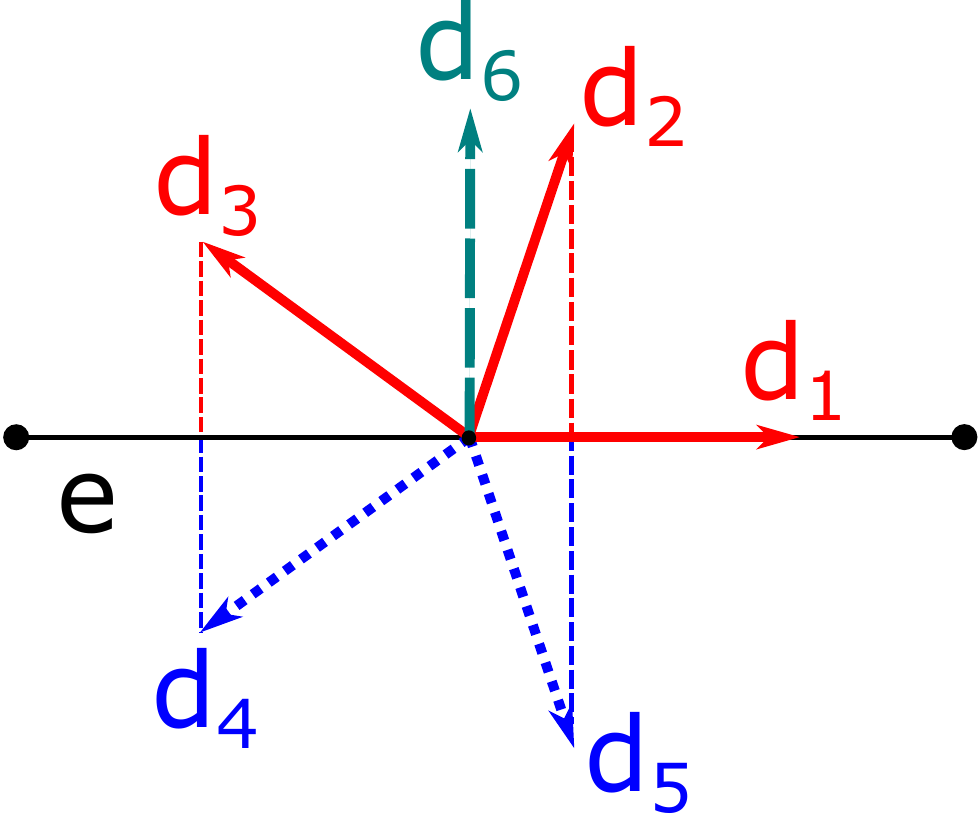}
\caption[]%
{{\small Two directions eliminated, orthogonal direction not yet included.}}    
\end{subfigure}
\caption[The average and standard deviation of critical parameters]
{\small \emph{Filtering process}.
We depict all the possible configurations. In solid lines, the directions that are kept; in dotted lines, the directions that are eliminated accordingly with~\eqref{filter_rel}; in dashed lines, the orthogonal direction that has to be possibly added in order to include constants.}
\label{fig:directions after filtering}
\end{figure}

We finally introduce the global nonconforming Sobolev space $H^{1,\nc}(\taun)$. To this end, we need to fix some additional notation.
In particular, given any internal edge $\e \in \mathcal E_n^I$ shared by the polygons $\E^-$ and $\E^+$ in $\taun$,
we denote by $\n_{\E^\pm}^e$ the two outer normal unit vectors with respect to $\partial \E^{\pm}$. For the sake of simplicity, when no confusion occurs, we will write $\n_{\E^\pm}$ in lieu of $\n_{\E^\pm}^e$.
Having this, for any $v \in H^1(\taun)$, we define the jump operator across an edge $\e \in \mathcal E^I_n$ as
\begin{equation*} 
\llbracket v \rrbracket :=
v_{|_{\E^+}} \n_{\E^+} + v_{|_{\E^-}} \n_{\E^-}.
\end{equation*}
Notice that $\llbracket v \rrbracket$ is vector-valued.

The global nonconforming Sobolev space with respect to the decomposition $\taun$ and underlying plane wave spaces with parameter~$\p \in \mathbb N$, reads
\begin{equation} \label{global non conforming space}
H^{1,nc}(\taun):=\left\{ v \in H^1(\taun): \, \int_{e} \llbracket v \rrbracket \cdot \n^e \, \overline{\we} \, \ds =0 \, \quad \forall \we \in \PWc(e), \, \forall e \in \EnI \right\},
\end{equation}
where $\n^e$ is either $\n_{\E^+}^e$ or $\n_{\E^-}^e$, but fixed.

\section{Nonconforming Trefftz virtual element methods} \label{section definition local space}
In this section, we construct a \ncTVEM\,for the approximation of solutions to the Helmholtz boundary value problem~\eqref{HH problem weak formulation}
and derive \textit{a priori} error estimates.

Our aim is to design a numerical method having the following structure:
\begin{equation} \label{HH problem weak discrete formulation}
\begin{cases}
\text{find } \un \in \Vnp \text{ such that}\\
\bn(\un,\vn) = \fn(\vn) \quad  \forall \vn \in  \Vnp,\\
\end{cases}
\end{equation} 
where, for all $n\in \mathbb N$, $\Vnp$ is a finite dimensional space subordinated to the mesh $\taun$, $\bn(\cdot,\cdot)~: \Vnp \times \Vnp \to \C$ is a {\it computable} sesquilinear form mimicking its continuous counterpart $b(\cdot,\cdot)$ defined in~\eqref{sesquilinearform b},
and the functional $\fn(\cdot):\, \Vnp \to \C$ is a {\it computable} counterpart of $\int_{\partial \Omega} \g \overline{v} \, \ds$.

The reason why we do not employ the continuous sesquilinear forms and right-hand side is that the functions in the \ncTVE\,spaces are not known in closed form;
therefore, the continuous sesquilinear forms and right-hand side are not computable.

The outline of this section is the following.
In Section \ref{subsection Trefftz-VE spaces}, local Trefftz-VE spaces, as well as the global space $\Vnp$ in~\eqref{HH problem weak discrete formulation}, are introduced.
Then, in Section \ref{subsection local projectors}, local projectors mapping from the local Trefftz-VE spaces into spaces of plane waves are defined;
such projectors will allow to define suitable $\bn(\cdot,\cdot)$ and $\fn(\cdot)$ in~\eqref{HH problem weak discrete formulation}, see Section \ref{subsection construction of the method}.
We point out that we do not discuss here the details of the implementation, but we refer the reader to~\cite{TVEM_Helmholtz_num}.

\subsection{Nonconforming Trefftz virtual element spaces} \label{subsection Trefftz-VE spaces}
The aim of this section is to specify the \ncTVE\,space $\Vnp$ that defines the method in~\eqref{HH problem weak discrete formulation}.

Given $p \in \N$, for every $n \in \N$ and $\E \in \taun$, we first introduce the {\it local} Trefftz-VE space
\begin{equation} \label{local Trefftz-VE space}
\VE:=\left\{\vn \in H^1(\E) \,\, | \,\, \Delta \vn+k^2 \vn = 0 \, \text{ in }\E, \quad \gammaIK(\vn)_{|_\e} \in \PWc(\e) \; \forall e \in \EE \right\},
\end{equation}
where we recall that $\gammaIK$ is defined in~\eqref{def impedance trace} and $\PWc(\e)$ is given in~\eqref{edge space with constant}.
In words, this space consists of all functions in $H^1(\E)$ in the kernel of the Helmholtz operator, whose impedance traces are edgewise equal to traces of plane waves including constants. 

It can be easily seen that $\PWE \subset \VE$.
However, the space $\VE$ also contains other functions not available in closed form, whence the term \textit{virtual} in the name of the method.

For each~$\E\in \taun$, the dimension~$\pE$ of the discrete space~$\VE$ in~\eqref{local Trefftz-VE space} coincides with the sum over all~$\e \in \EE$ of the dimension~$\pe$ of the edge plane wave spaces~$\PWc(\e)$ in~\eqref{edge space with constant}.
Therefore, we have
\begin{equation*}
\pE:=\dim \VE=\sum_{e \in \EE} \pe,
\end{equation*}
where we recall that $\pe \le \p+1$.

Having this, we define the set of local degrees of freedom on $\E \in \taun$ as the moments on each edge $\e \in \EE$ with respect to functions in the space $\PWc(\e)$ defined in~\eqref{edge space with constant}. More precisely, given $\vn \in \VE$, its degrees of freedom are
\begin{equation} \label{dofs}
\dof_{\e,\ell}(\vn) = \frac{1}{\he} \int_\e \vn \overline{\wle} \, \ds \quad  \forall e \in \EE, \, \forall \ell \in \Je.
\end{equation}
Besides, we denote by $\{ \varphi_{\e,\ell}  \}_{\e\in \EE,\, \ell \in \Je}$ the local canonical basis, where
\begin{equation} \label{local basis}
\dof_{\widetilde \e, \widetilde\ell} (\varphi_{\e,\ell}) = \delta_{(\e,\ell),(\widetilde \e, \widetilde \ell)} = \begin{cases}
1 & \text{if } (\e,\ell) = (\widetilde \e, \widetilde \ell) \\
0 & \text{otherwise}.\\
\end{cases}
\end{equation}

We prove that the set of degrees of freedom~\eqref{dofs} is unisolvent for all $\E \in \taun$, provided that the following assumption on the wave number $\k$ is satisfied:  
\begin{itemize}
\item[(\textbf{A1})] the wave number $\k$ is such that $\k^2$ is not a Dirichlet-Laplace eigenvalue on $\E$ for all $\E\in \taun$.
\end{itemize}
For a given $\E \in \taun$, the assumption (\textbf{A1}) results in a condition on the product $\hE  \k$. To be more precise, 
for any simply connected element $\E$, the smallest Dirichlet-Laplace eigenvalue on $\E$ satisfies
\[
\lambda_1\ge\frac{a}{\rho_\E^2},
\]
where $\rho_\E$ denotes the radius of the largest ball contained in $\E$ and where $a\ge 0.6197$, see e.g. \cite{banuelos1994brownian}.

As a consequence, assuming that
\begin{equation} \label{condition for Dirichlet-Laplace}
\hE \k \le \sqrt{c_0 a}
\end{equation}
for some $c_0\in (0,1]$, we deduce
\[
\k^2 = \frac{\hE^2 \k^2}{\hE^2} \le \frac{c_0 a}{\hE^2} \le \frac{c_0 a}{\rho_\E^2}\le \lambda_1,
\]
which means that the condition~\eqref{condition for Dirichlet-Laplace} guarantees that $\k^2$ is not a Dirichlet-Laplace eigenvalue on~$\E$.

\begin{lem} \label{lemma unisolvency}
Suppose that the assumption (\textbf{A1}) holds true. Then, for every $\E \in \taun$, the set of degrees of freedom~\eqref{dofs} is unisolvent for $\VE$.
\end{lem}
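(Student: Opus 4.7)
The plan is to use a dimension-count argument: since the number of degrees of freedom listed in~\eqref{dofs} is exactly $\sum_{e \in \EE} \pe = \pE = \dim \VE$, unisolvency is equivalent to showing that the only $\vn \in \VE$ annihilated by all the functionals $\dof_{\e,\ell}$ is $\vn \equiv 0$. So I would fix such a $\vn$ and aim to deduce $\vn = 0$ in $\E$ by reducing to a homogeneous Dirichlet problem for the Helmholtz operator, which has only the trivial solution thanks to assumption (\textbf{A1}).

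The central computation is an integration by parts. Using that $\Delta \vn + \k^2 \vn = 0$ in $\E$,
\[
0 = \int_\E (\Delta \vn + \k^2 \vn)\overline{\vn}\,\dx = -\int_\E |\nabla \vn|^2\,\dx + \int_{\partial \E} (\nabla \vn \cdot \nE)\,\overline{\vn}\,\ds + \k^2 \int_\E |\vn|^2\,\dx.
\]
Writing $\nabla \vn \cdot \nE = \gammaIK(\vn) - \im\k \vn$, this becomes
\[
\int_\E |\nabla \vn|^2\,\dx - \k^2 \int_\E |\vn|^2\,\dx + \im\k \int_{\partial \E} |\vn|^2\,\ds = \int_{\partial \E} \gammaIK(\vn)\,\overline{\vn}\,\ds.
\]

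The key step is to observe that the right-hand side vanishes. By definition of $\VE$, the impedance trace $\gammaIK(\vn)_{|_\e}$ lies in $\PWc(\e) = \lin\{\wle : \ell \in \Je\}$ for each $\e \in \EE$, so I can expand $\gammaIK(\vn)_{|_\e} = \sum_{\ell \in \Je} c_\ell^\e\, \wle$. Then on each edge,
\[
\int_\e \gammaIK(\vn)\,\overline{\vn}\,\ds = \sum_{\ell \in \Je} c_\ell^\e \int_\e \wle\,\overline{\vn}\,\ds = \he \sum_{\ell \in \Je} c_\ell^\e\, \overline{\dof_{\e,\ell}(\vn)} = 0,
\]
by the vanishing of all degrees of freedom. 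Summing over $\e \in \EE$ gives $\int_{\partial \E} \gammaIK(\vn)\,\overline{\vn}\,\ds = 0$.

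Taking the imaginary part of the resulting identity yields $\k \int_{\partial \E} |\vn|^2\,\ds = 0$, so $\vn = 0$ on $\partial \E$. Thus $\vn$ solves the homogeneous Dirichlet problem $\Delta \vn + \k^2 \vn = 0$ in $\E$, $\vn = 0$ on $\partial \E$, and assumption (\textbf{A1}) forces $\vn \equiv 0$ in $\E$. I do not expect any real obstacle: the only point that requires some care is making sure that the expansion of $\gammaIK(\vn)$ on each edge really pairs with $\overline{\vn}$ to produce complex conjugates of the degrees of freedom, which is exactly how $\PWc(\e)$ has been defined.
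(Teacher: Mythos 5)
Your proof is correct and follows essentially the same route as the paper: a dimension count reduces unisolvency to injectivity, an integration by parts combined with the fact that $\gammaIK(\vn)_{|_\e}\in\PWc(\e)$ and the vanishing of the degrees of freedom kills the boundary term, the imaginary part gives $\vn=0$ on $\partial\E$, and (\textbf{A1}) concludes. The only cosmetic difference is that the paper places the complex conjugate on the impedance trace rather than on $\vn$, which changes nothing in the argument.
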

\begin{proof}
Given $\E \in \taun$, we first observe that the dimension of the local space $\VE$ is equal to the number of functionals in~\eqref{dofs}. Thus, we only need to prove that, given any $\vn \in \VE$ such that all degrees of freedom~\eqref{dofs} are zero, then $\vn=0$.
 
 To this end, we observe that an integration by parts, together with the fact that $\vn$ belongs to the kernel of the Helmholtz operator, yields
\begin{equation} \label{equation for unisolvency}
\begin{split}
& \vert \vn \vert^2_{1,\E} - \k^2 \Vert \vn \Vert^2_{0,\E} - \im\k\Vert \vn \Vert^2_{0,\partial \E} = \int_\E \nabla \vn \cdot \overline{\nabla \vn} \, \dx - \k^2 \int_\E \vn \overline{\vn} \, \dx - \im\k \int_{\partial \E} \vn \overline{\vn} \, \ds \\
& = \int_\E \vn \overline{\underbrace{(-\Delta \vn - \k^2 \vn)}_{=0}} \, \dx + \int_{\partial \E} \vn \overline{(\nabla \vn \cdot \nE + \im\k \vn)} \, \ds = \sum_{\e \in \mathcal E^\E} \int_{\e} \vn \overline{\gammaIK(\vn)_{|_e}} \, \ds = 0,\\
\end{split}
\end{equation}
where in the last identity we also used the facts that, owing to the definition of the space $\VE$ in~\eqref{local Trefftz-VE space}, the impedance trace of $\vn$ is an element of the space~\eqref{boundary space with constants},
and that the degrees of freedom~\eqref{dofs} of $\vn$ are zero. Thus, the imaginary part on the left-hand side of~\eqref{equation for unisolvency} is zero and one deduces that $\vn=0$ on $\partial \E$.
Since $\vn$ is also solution to a homogeneous Helmholtz equation, the assertion follows thanks to the assumption (\textbf{A1}).
\end{proof}
The \emph{global} Trefftz-VE space is given by
\begin{equation} \label{global Trefftz space}
\Vnp:=\{ \vn \in H^{1,nc}(\taun): \, v_{\h|_\E} \in \VE \quad \forall \E \in \taun \},
\end{equation}
where $H^{1,nc} (\taun)$ is defined in~\eqref{global non conforming space} with the same uniform~$\p$ as in the definition of the local Trefftz-VE spaces in~\eqref{local Trefftz-VE space}.
Consequently, the set of global degrees of freedom is obtained by coupling the local degrees of freedom on the interfaces between elements.

We underline that the definition of the degrees of freedom~\eqref{dofs} is actually tailored for building discrete trial and test spaces that are nonconforming in the sense of~\eqref{global non conforming space}.
Besides, they will be used in the construction of projectors mapping onto spaces of plane waves. This is the topic of the next Section \ref{subsection local projectors}.

\begin{remark} \label{remark actual dimension of the space}
Under the choice of the degrees of freedom in~\eqref{dofs}, the dimension of the global space is larger than that of plane wave discontinuous Galerkin methods \cite{MoiolaPhDthesis}.
However, at the practical level,
an ed\-ge-by-ed\-ge or\-tho\-go\-na\-li\-za\-tion-and-fil\-te\-ring process can be implemented, in order to reduce the dimension of the \ncTVE\,space without losing in terms of accuracy.
This procedure is described in detail in~\cite{TVEM_Helmholtz_num}. There, it is also observed that the two methods have a comparable behavior in terms of accuracy versus number of degrees of freedom and,
in some occasions, the method presented herein performs better.
\end{remark}

\subsection{Local projectors} \label{subsection local projectors}
In this section, we introduce local projectors mapping functions in local Trefftz-VE spaces~\eqref{local Trefftz-VE space} onto plane waves.
Such projectors will play a central role in the construction of the computable sesquilinear form $\bn(\cdot,\cdot)$ and functional $\fn(\cdot)$ for the method~\eqref{HH problem weak discrete formulation}.

To start with, given $\E \in \taun$, we define the local projector
\begin{equation} \label{projector aK}
\begin{split}
\Pip: \, &\VE \rightarrow \PWE \\
&\aE(\Pip \un, \wE) = \aE(\un, \wE) \quad \forall \un \in \VE,\, \forall \wE \in \PWE.
\end{split}
\end{equation}
Note that this projector is {\it computable} by means of the degrees of freedom~\eqref{dofs} without the need of explicit knowledge of the functions of $\VE$.
Indeed, an integration by parts and the fact that any plane wave $\wE\in \PWE$ belongs to the kernel of the Helmholtz operator lead to
\begin{equation*}
\begin{split}
\aE(\un ,\wE) 	& = \int_{\E} \nabla \un \cdot \overline{\nabla \wE} \, \dx -k^2 \int_{\E} \un \overline{\wE} \, \dx \\
& = \sum_{\e \in \EE} \int_{\e} \un \overline{(\nabla \wE \cdot \nE)} \, \ds \quad \forall \un \in \VE,\, \forall \wE \in \PWE.\\
\end{split}
\end{equation*}
Since $(\nabla \wE \cdot \n)_{\E|_e} \in \PWc(e)$ for all $e \in \EE$, computability is guaranteed by the choice of the degrees of freedom in~\eqref{dofs}. 

In the following proposition we prove that $\Pip$ is well-defined.
\begin{prop}
Assume that $\E$ is an element of a mesh that satisfies the assumption (\textbf{G1}). Then, the following two statements hold true:
\begin{enumerate}
\item Denoting by $\mu_2$ the smallest positive Neumann-Laplace eigenvalue in $\E$, it holds 
\begin{equation*}
\mu_2 \geqslant \frac{\CDelta \pi^2}{h^2_\E},
\end{equation*}
where $\CDelta \in (0,1]$ only depends on the shape of $\E$, i.e., on $\rho_0$ and $\rho$ in the assumption (\textbf{G1}).
\item Assume that the assumption (\textbf{D1}) holds true. If $\hE k$ is such that there exists a constant $C_1>0$ with
\begin{equation*}
0<\hE k < C_1 \le \min\left\{\frac{\sqrt{\CDelta} \pi}{\sqrt{2}}, 0.5538 \right\},
\end{equation*}
then $\k^2<\mu_2$, and in particular it follows that $\Pip$ is well-defined and continuous. More precisely, there exists a constant $\beta(\hE k)>0$, uniformly bounded away from zero as $\hE k \to 0$, such that 
\begin{equation*}
\lVert \Pip \un \rVert_{1,\k,\E} \le \frac{1}{\beta(\hE \k)} \lVert \un \rVert_{1,\k,\E} \quad \forall \un \in \VE.
\end{equation*}
\end{enumerate}
Note that, whenever $\E$ is convex, $\CDelta=1$, see e.g.~\cite{payne1960optimal}, and hence $\min\left\{\frac{\sqrt{\CDelta} \pi}{\sqrt{2}}, 0.5538 \right\} = 0.5538.$
\end{prop}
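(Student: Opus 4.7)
The plan is to prove the eigenvalue bound first, and then bootstrap it into stability of $\Pip$ via a T-coercivity argument on $H^1(\E)$.

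For the first item, I would treat the convex case via the classical Payne--Weinberger inequality, $\mu_2(D) \ge \pi^2/d^2$ for a convex domain $D$ of diameter $d$: taking $d=\hE$ gives $\CDelta=1$. For the general star-shaped case under (\textbf{G1}), a comparison argument exploiting the inclusion $B_{\rho \hE}(\mathbf{x}_{0,\E}) \subset \E$ together with the star-shapedness with respect to $B_{\rho_0 \hE}(\mathbf{x}_{0,\E})$ yields $\CDelta \in (0,1]$ depending only on $\rho$ and $\rho_0$, analogous to the Poincar\'e-type estimates on star-shaped polygons known in the VEM literature.

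For the second item, first combine the hypothesis $\hE \k < \sqrt{\CDelta}\pi/\sqrt{2}$ with the eigenvalue bound to deduce $\k^2 < \CDelta \pi^2/(2\hE^2) \le \mu_2/2 < \mu_2$. The stability is then obtained via T-coercivity: define the involution $Tu := u - 2\bar u_\E$ on $H^1(\E)$, where $\bar u_\E := |\E|^{-1} \int_\E u$, and split $u = \bar u_\E + u_0$ with $u_0$ of zero mean. A direct computation yields
\begin{equation*}
\aE(u, Tu) = |u_0|^2_{1,\E} - \k^2 \|u_0\|^2_{0,\E} + \k^2 |\E|\, |\bar u_\E|^2.
\end{equation*}
The Neumann--Poincar\'e inequality $\|u_0\|^2_{0,\E} \le \mu_2^{-1} |u_0|^2_{1,\E}$ (a consequence of the eigenvalue bound) together with $\k^2 < \mu_2/2$ then gives $\aE(u, Tu) \ge \beta(\hE \k)^2 \|u\|^2_{1,\k,\E}$ with $\beta(\hE \k)^2 := (\mu_2 - \k^2)/(\mu_2 + \k^2)$, which is bounded away from zero as $\hE \k \to 0$ since $\k^2/\mu_2 \to 0$.

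The main obstacle is transferring this $H^1(\E)$-level T-coercivity into a stability bound for $\Pip$, whose range is $\PWE$: the involution $T$ does not preserve $\PWE$ (constants are not plane waves), so $T(\Pip \un) \notin \PWE$ and cannot be substituted into the defining Galerkin relation for $\Pip$. To circumvent this, I would separately establish non-degeneracy of the restricted form $\aE|_{\PWE \times \PWE}$ by exploiting that, for $w \in \PWE$ a Helmholtz solution, integration by parts gives $\aE(w, v) = \int_{\partial \E} (\nabla w \cdot \nE)\, \overline{v}\, \ds$ for all $v \in H^1(\E)$; the condition $\aE(w, v) = 0$ for all $v \in \PWE$ then becomes a boundary-orthogonality relation which, combined with (\textbf{A1}) and the Neumann--Poincar\'e bound, forces $w = 0$. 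Continuity of $\Pip$ in $\|\cdot\|_{1,\k,\E}$ with constant $1/\beta(\hE \k)$ then follows from the standard discrete inf-sup argument applied to the defining identity of $\Pip$.
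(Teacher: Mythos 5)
The paper does not actually prove this proposition: it disposes of part~1 by citing \cite{bramble1962bounds,ladeveze1978bounds} and of part~2 by citing \cite[Propositions 2.1 and 2.3]{Helmholtz-VEM}, so you are attempting strictly more than the authors do. Your part~1 (Payne--Weinberger in the convex case, a star-shapedness comparison in general) is consistent with those references, and your $T$-coercivity computation on $H^1(\E)$ is correct: with $Tu=u-2\bar u_\E$ one does get $\aE(u,Tu)=|u_0|^2_{1,\E}-\k^2\|u_0\|^2_{0,\E}+\k^2|\E|\,|\bar u_\E|^2$, and the constant $\beta(\hE\k)^2=(\mu_2-\k^2)/(\mu_2+\k^2)$ follows from the Neumann--Poincar\'e inequality and stays bounded away from zero.

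The genuine gap is exactly the step you flag and then wave at: transferring this coercivity to the subspace $\PWE$, which is the entire content of the cited Propositions 2.1 and 2.3 of \cite{Helmholtz-VEM}. Your proposed circumvention does not close it. First, you invoke (\textbf{A1}), which is not a hypothesis of this proposition (only (\textbf{G1}), (\textbf{D1}) and the smallness of $\hE\k$ are assumed), and in any case (\textbf{A1}) concerns Dirichlet--Laplace eigenvalues, whereas the obstruction here is the missing constant mode, a Neumann-type issue. Second, the claim that $\aE(w,v)=0$ for all $v\in\PWE$ ``forces $w=0$'' is not derived: taking $v=w$ only gives $|w_0|^2_{1,\E}=\k^2\|w\|^2_{0,\E}$, which bounds $\|w_0\|_{0,\E}$ by $|\bar w_\E|$ via the Neumann--Poincar\'e inequality but leaves $\bar w_\E$ uncontrolled precisely because the constant is not an admissible test function in $\PWE$. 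Third, even granting nondegeneracy, that only yields well-definedness of $\Pip$ with \emph{some} constant; the stated bound requires a quantitative discrete inf-sup constant on $\PWE\times\PWE$ that is uniform as $\hE\k\to0$, i.e.\ a test function $v_w\in\PWE$ (not $Tw\notin\PWE$) with $|\aE(w,v_w)|\gtrsim\|w\|^2_{1,\k,\E}$ and $\|v_w\|_{1,\k,\E}\lesssim\|w\|_{1,\k,\E}$. This is where the hypothesis $\hE\k\le 0.5538$ enters in \cite{Helmholtz-VEM} (one approximates the constant mode within $\PWE$ well enough for small $\hE\k$); your argument never uses that bound, only $\hE\k<\sqrt{\CDelta}\pi/\sqrt{2}$, which is a reliable sign that the decisive step is missing.
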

\begin{proof}
For the proof of the first part, one can refer to \cite{bramble1962bounds,ladeveze1978bounds}, and for the second part, to \cite[Propositions 2.1 and 2.3]{Helmholtz-VEM}.
\end{proof}
Given a function~$\vn \in \VE$, in addition to the projector~$\Pip$, we define on every edge~$\e \in \EE$ the projector
\begin{equation} \label{projector edge L2}
\begin{split}
\Pie: \, & \VE_{|_\e} \rightarrow \PWc(\e) \\
&\int_\e \Pie (\vn{}_{|\e}) \overline{\we} \, \ds = \int_e \vn{}_{|_\e} \overline{\we} \, \ds \quad \forall \vn\in\VE,\,\forall \we \in \PWc(\e).
\end{split}
\end{equation}
The computability of this projector for functions in $\VE$ is again provided by the choice of the degrees of freedom in~\eqref{dofs}.
Clearly, $\Pie (\un{}_{|\e})$ coincides with the $L^2(\e)$ projection of~$\un{}_{|_\e}$ onto~$\PWc(\e)$.
\begin{remark}
The projector $\Pie$ is not defined for functions in the nonconforming space $\Vnp$ in~\eqref{global Trefftz space},
but rather for the restrictions of such functions to the elements of the mesh.
However, in order to avoid a cumbersome notation in the following, we will not highlight such restrictions whenever it is clear from the context.
\end{remark}

The following approximation result holds true.
\begin{prop} \label{proposition best approx edge}
Let $\E \in \taun$ and $e \in \EE$. For all $u \in H^1(\E)$, it holds
\begin{equation} \label{bound u-Piu}
\Vert u - \Pie u \Vert_{0,\e} \le \CTP \hE^{\frac{1}{2}} \vert u - \wE \vert_{1,\E} \quad \forall \wE \in \PWE,
\end{equation}
where the constant $C_0>0$ only depends on the shape of $\E$.
\end{prop}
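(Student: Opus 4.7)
The plan is to exploit the optimality of the edge $L^2$-projection together with the explicit structure of $\PWc(\e)$. The critical observation is that, for every bulk plane wave $\wE \in \PWE$, its trace on $\e$ satisfies $\wE{}_{|_\e} \in \PWc(\e)$: indeed, $w_\ell^\E{}_{|_\e}(\x) = e^{\im\k \dl \cdot (\xe - \xE)} \wle(\x)$, which is a scalar multiple of $\wle$, and after the filtering process the remaining $\wle$'s still span this set of traces. Moreover, by construction, $\PWc(\e)$ also contains the constant function $1$ (this is exactly the purpose of the augmentation step adding $w_{\p+1}^\e$ when needed). Therefore, for every $\wE \in \PWE$ and every $c \in \C$, the function $\wE{}_{|_\e} + c$ lies in $\PWc(\e)$.

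Fixing an arbitrary $\wE \in \PWE$ and setting $v := u - \wE$, the best-approximation property of the $L^2(\e)$-projection $\Pie$ onto $\PWc(\e)$ would give $\lVert u - \Pie u \rVert_{0,\e} \le \lVert v - c \rVert_{0,\e}$ for any constant $c$. I would select $c := |\e|^{-1} \int_\e v \, \ds$, the edge average of $v$. Applying the trace inequality~\eqref{trace inequality} to the function $v - c$ yields
\[
\lVert v - c \rVert_{0,\e}^2 \le \lVert v - c \rVert_{0,\partial \E}^2 \le \CT \bigl( \hE^{-1} \lVert v - c \rVert_{0,\E}^2 + \hE \lvert v \rvert_{1,\E}^2 \bigr),
\]
where I used that $|v - c|_{1,\E} = |v|_{1,\E}$ since $c$ is constant.

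To dispose of the zero-order term I would invoke the Poincar\'e-Friedrichs inequality~\eqref{Poincare Friedrich partial} with $\Upsilon = \e$ and $\xi = v - c$: by the choice of $c$ we have $\int_\e (v - c)\, \ds = 0$, so $\lVert v - c \rVert_{0,\E} \le \CP \hE |v|_{1,\E}$. Substituting this into the previous estimate gives $\lVert v - c \rVert_{0,\e}^2 \le \CT (\CP^2 + 1)\, \hE\, \lvert v \rvert_{1,\E}^2$, and taking the square root together with $|v|_{1,\E} = |u - \wE|_{1,\E}$ produces the claim with $\CTP := \sqrt{\CT(\CP^2+1)}$. The only nontrivial point in this argument is the opening observation: without the simultaneous presence of both bulk plane wave traces \emph{and} constants in $\PWc(\e)$, one could not subtract $\wE$ to generate the $H^1$-seminorm of $u-\wE$ on the right-hand side while also using Poincar\'e-Friedrichs to eliminate the $L^2$-norm coming from the trace bound; everything else is a standard combination of the trace and Poincar\'e inequalities and requires no regularity of $u$ beyond $H^1(\E)$.
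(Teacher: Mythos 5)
Your proof is correct and follows essentially the same route as the paper: bound $\Vert u - \Pie u\Vert_{0,\e}$ by the distance to $\wE{}_{|_\e}+c$ (which lies in $\PWc(\e)$), pass to $\partial\E$, apply the trace inequality, and absorb the zero-order term via a Poincar\'e--Friedrichs inequality, yielding $\CTP^2=\CT(\CP^2+1)$. The only (immaterial) difference is that you take $c$ to be the edge average and invoke~\eqref{Poincare Friedrich partial}, whereas the paper takes the element average and uses~\eqref{Poincare Friedrich full}; your explicit justification that traces of bulk plane waves plus constants survive the filtering and lie in $\PWc(\e)$ is a point the paper leaves implicit.
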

\begin{proof}
We first note that, for each $\E \in \taun$ and $e \in \EE$, the definition of $\Pie$ in~\eqref{projector edge L2} yields
\[
\Vert u - \Pie u \Vert_{0,\e} \le \Vert u - c - \wE \Vert_{0,\e}\le \Vert u - c - \wE \Vert_{0,\partial \E} \quad \forall \wE \in \PWE, \, \forall c \in \mathbb C.
\]
By selecting
\begin{equation*}
c=\frac{1}{|\E|} \int_{\E} (u - \wE) \, \dx,
\end{equation*}
and by using the trace inequality~\eqref{trace inequality} together with the Poincar\'e-Friedrichs inequality~\eqref{Poincare Friedrich full}, we get
\begin{equation*} 
\begin{split}
\lVert u - c - \wE \rVert^2_{0,\partial \E} 
&\le \CT \left(\hE^{-1} \lVert u - c - \wE \rVert^2_{0,\E} + \hE \lvert u  - \wE \rvert^2_{1,\E} \right) \\
&\le \CT (\CP^2+1) \hE \lvert u - \wE \rvert^2_{1,\E},
\end{split}
\end{equation*}
from which we have~\eqref{bound u-Piu} with $\CTP^2:=\CT (\CP^2+1)$.
\end{proof}
For future use, we also denote by $\Pib$ the $L^2$ projector
\begin{equation} \label{projector boundary L2}
\begin{split}
\Pib: \, L^2(\partial \Omega) \rightarrow \prod_{e \in \Enb} \PWc(\e).
\end{split}
\end{equation}
We highlight that, for any~$\vn \in V_\h$, the identity~$(\Pib(\vn{}_{|_{\partial \Omega}}))_{|_\e} = \Pie (\vn{}_{|_\e})$ holds for all boundary edges~$\e \in \Enb$.

\subsection{Discrete sesquilinear forms and right-hand side} \label{subsection construction of the method}
We introduce the sesquilinear form $\bn(\cdot,\cdot)$ and the functional $\fn(\cdot)$ characterizing the method~\eqref{HH problem weak discrete formulation}. 

\subsubsection*{Construction of $\bn(\cdot,\cdot)$}
Following the VEM gospel \cite{VEMvolley}, the definition of $\Pip$ in~\eqref{projector aK} gives
\begin{equation} \label{Pythagoras theorem}
\aE(\un,\vn) = \aE( \Pip \un, \Pip \vn) + \aE( (I-\Pip) \un, (I-\Pip) \vn) \quad \forall \un,\,\vn \in \VE.
\end{equation}
The first term on the right-hand side of~\eqref{Pythagoras theorem} is computable, whereas the second one is not, and thus has to be replaced by a proper \emph{computable} sesquilinear form $\SE(\cdot,\cdot)$, which henceforth goes under the name of \textit{stabilization};
see~\eqref{D-recipe} for an explicit choice. Having this, we set
\begin{equation} \label{local discrete bf}
\anE(\un,\vn) := \aE( \Pip \un, \Pip \vn) + \SE\left( (I-\Pip) \un, (I-\Pip) \vn \right) \quad \forall \un,\,\vn \in \VE.
\end{equation}
We point out that the local sesquilinear form $\anE(\cdot,\cdot)$ satisfies the following \emph{plane wave consistency property}:
\begin{equation} \label{pw consistency}
\anE(\wE,\vn) = \aE(\wE,\vn), \quad \anE(\vn,\wE) = \aE(\vn,\wE) \quad \forall \wE \in \PWE, \, \forall \vn \in \VE.
\end{equation}
Moreover, we replace the boundary integral term in $\b(\cdot,\cdot)$ in~\eqref{sesquilinearform b} with
\begin{equation*}
\im\k \int_{\partial \Omega} \un \overline{\vn} \, \ds 
\quad \mapsto \quad \im\k \int_{\partial \Omega} (\Pib \un) \overline{(\Pib \vn)} \, \ds \quad \forall \un,\vn \in \Vnp,
\end{equation*}
where $\Pib$ is defined in~\eqref{projector boundary L2}. Altogether, the global sesquilinear form $\bn(\cdot,\cdot)$ in~\eqref{HH problem weak discrete formulation} is given by
\begin{equation} \label{definition bn}
\bn(\un,\vn):=\an(\un,\vn) + \im\k \int_{\partial \Omega} (\Pib \un) \overline{(\Pib \vn)} \, \ds  \quad \forall \un,\vn \in \Vnp,
\end{equation} 
where 
\begin{equation*}
\an(\un,\vn):=\sum_{\E \in \taun} \anE(\un,\vn).
\end{equation*}
\medskip

In the subsequent error analysis of the method~\eqref{HH problem weak discrete formulation}, see Theorem~\ref{theorem abstract theory} below,
we will require continuity of the local sesquilinear forms $\anE(\cdot, \cdot)$ given in~\eqref{local discrete bf}, as well as a discrete G\aa{}rding inequality for $\bn(\cdot, \cdot)$ defined in~\eqref{definition bn}.

If the stabilization forms~$\SE(\cdot,\cdot)$ satisfy
\begin{equation} \label{equations 34 and 35}
\alphah \Vert \vn \Vert_{1,\k,\E}^2 - 2\k^2 \Vert \vn \Vert^2_{0,\E} \le \SE(\vn,\vn) \le \gamma_\h \Vert \vn \Vert^2_{1,\k,\E}\quad \forall \vn \in \ker (\Pip),\, \forall \E \in \taun,
\end{equation}
for some positive constants~$\alphah$ and~$\gamma_\h$, then, by proceeding as in Theorem \cite[Proposition 4.1]{Helmholtz-VEM},
one can prove that the local continuity assumptions and the local G\r arding inequalities of Theorem~\ref{theorem abstract theory} are satisfied.

\subsubsection*{Construction of $\fn(\cdot)$}
We set $\gn:=\Pib g$, where $\Pib$ is defined in~\eqref{projector boundary L2}.
Using the approximation $\gn$ instead of $\g$, allows us to define the computable functional
\begin{equation} \label{discrete rhs}
\fn(\vn):=\int_{\partial \Omega} \gn \overline{\vn} \, \ds
= \int_{\partial \Omega} \g \overline{(\Pib \vn)} \, \ds.
\end{equation}
In order to avoid additional complications in the forthcoming analysis, we will assume that the integral in~\eqref{discrete rhs} can be computed exactly. In practice, one approximates such integrals using high order quadrature formulas.

\section{\textit{A priori} error analysis} \label{section a priori error analysis}
In this section, we first prove approximation properties of functions in Trefftz-VE spaces in Section~\ref{subsection best approximation estimates in Trefftz-VE spaces}.
Then, in Section~\ref{subsection abstract error analysis}, we deduce an abstract error result which is instrumental for the derivation of \textit{a priori} error estimates in Section \ref{subsection a priori error bound}.



\subsection{Approximation properties of functions in Trefftz virtual element spaces} \label{subsection best approximation estimates in Trefftz-VE spaces}
In order to discuss the approximation properties for the \ncTVE\,spaces,
we recall the following local $\h$-version best approximation result from \cite[Theorem 5.2]{moiola2009approximation} for plane wave spaces in two dimensions.

\begin{thm} \label{theorem local h-best approximation plane waves}
Assume that $\E$ is an element of a mesh $\taun$ satisfying the assumption (\textbf{G1}). In addition, let $u \in H^{s+1}(\E)$, $s \in \mathbb R_{\ge 1}$, be such that $\Delta u +\k^2 u=0$,
and let $\PWE$ be the plane wave space with directions $\{ \dl \}_{\ell=1,\dots,p}$, $p = 2q+1$, $q \in \N_{\ge 2}$, satisfying the assumption (\textbf{D1}).
Then, for every $L \in \mathbb R$ with $1 \le L \le \min(q,s)$, there exists $\wE \in \PW_p(\E)$ such that, for every $0 \le j \le L$, it holds
\begin{equation*}
\lVert u-\wE \rVert_{j,\k,\E} \le \cPW (\k\hE) \hE^{L+1-j} \lVert u \rVert_{L+1,\k,\E},
\end{equation*}
where 
\begin{equation} \label{cPW definition}
\cPW (t) := C e^{\left( \frac{7}{4}-\frac{3}{4}\rho \right)t} \left( 1+t^{j+q+8} \right),
\end{equation}
and the constant $C>0$ depends on $q$, $j$, $L$, $\rho$, $\rho_0$, and the directions $\{ \dl \}$, but is independent of $\k$, $\hE$, and $u$.
Note that the constant $\cPW(\k\hE)$ in~\eqref{cPW definition} is uniformly bounded as $\hE \to 0$.
\end{thm}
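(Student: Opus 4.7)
The strategy I would follow is the Vekua-operator-based approach developed in Moiola--Hiptmair--Perugia. The argument splits naturally into three reductions: (i) represent the Helmholtz solution via the Vekua transform in terms of an auxiliary harmonic function on $\E$; (ii) approximate that harmonic function by harmonic polynomials and transfer the estimate back through the inverse Vekua operator, yielding approximation of $u$ by generalized harmonic polynomials (i.e., truncated Fourier--Bessel series); (iii) approximate each circular wave by a linear combination of the plane waves in $\PWE$ using the Jacobi--Anger identity.

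For step (i), I would introduce the Vekua operator $V$ and its inverse as explicit integral operators between harmonic functions and Helmholtz solutions on the star-shaped element $\E$. Using (\textbf{G1}) together with weighted Sobolev bounds on $V$ and $V^{-1}$, one obtains operator norms whose dependence on $\k\hE$ is essentially exponential with exponent controlled by $\tfrac{7}{4}-\tfrac{3}{4}\rho$; this is precisely the source of the prefactor $e^{(7/4-3\rho/4)t}$ in~\eqref{cPW definition}. For step (ii), I would apply a Bramble--Hilbert-type best approximation estimate for harmonic functions on a domain star-shaped with respect to $B_{\rho_0 \hE}(\xE)$, producing an $\hE^{L+1-j}$ rate in the $H^j$-seminorm, and then push the estimate back through $V^{-1}$. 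The output is approximation of $u$, in the weighted norm $\lVert\cdot\rVert_{j,\k,\E}$, by elements of the span of the circular waves $\{J_n(\k r)e^{\im n \theta}\}_{|n|\le q}$ in polar coordinates centered at $\xE$.

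For step (iii), I would expand each plane wave $\wlE$ through the Jacobi--Anger identity
\[
e^{\im\k \dl \cdot (\x-\xE)} = \sum_{n\in\Z} \im^n J_n(\k|\x-\xE|)\, e^{\im n(\theta-\phi_\ell)},
\]
where $\phi_\ell$ is the polar angle of $\dl$. Matching the $p=2q+1$ leading Fourier modes of a target circular wave reduces to inverting a Vandermonde-type linear system in the variables $e^{\im\phi_\ell}$; its invertibility and a bound on its condition number follow from the minimum-angle hypothesis (\textbf{D1}). The residual tail in the Jacobi--Anger expansion is controlled via the asymptotics $J_n(t)\sim (t/2)^n/n!$ for large $n$, which yields a polynomial factor in $\k\hE$. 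Combining this tail with the Vandermonde conditioning and with the passage from $L^2(\E)$ to the $H^j$-topology via inverse estimates on $\PWE$ produces the polynomial factor $(1+t^{j+q+8})$ in~\eqref{cPW definition}.

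The main technical obstacle I would expect is the careful bookkeeping in the composition of these three steps: tracking the $\k\hE$ dependence of the Vekua operator, of the Vandermonde condition number, and of the Bessel tail, all in the $H^j$-norm rather than $L^2$, is what pins down the exact exponents in $\cPW(\k\hE)$. Since this is exactly the content of~\cite[Theorem~5.2]{moiola2009approximation}, I would cite that analysis rather than redo the asymptotic bookkeeping in detail.
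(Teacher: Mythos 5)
The paper does not prove this theorem; it simply recalls it from \cite[Theorem 5.2]{moiola2009approximation}, and your outline (Vekua transform, harmonic polynomial approximation transferred to generalized harmonic polynomials, then Jacobi--Anger plus a Vandermonde system controlled by (\textbf{D1})) is an accurate summary of exactly that reference's argument. Since you conclude by citing the same result rather than redoing the bookkeeping, your proposal matches the paper's approach.
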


In the ensuing result, we prove that the best approximation error of functions in the \ncTVE\,space $\Vnp$ can be bounded by the best error in (discontinuous) plane wave spaces.

\begin{thm} \label{theorem best approximation result}
Consider a family of meshes $\{\taun\}_{n \in \N}$ satisfying the assumptions (\textbf{G1})-(\textbf{G3}) and (\textbf{A1}), and let $\Vnp$ be the nonconforming Trefftz-VE space defined in~\eqref{global Trefftz space}
with directions $\{ \dl \}_{\ell=1,\dots,p}$, $p = 2q+1$, $q \in \N_{\ge 2}$, satisfying the assumption (\textbf{D1}). Further, assume that, on every element $\E \in \taun$, $k$ and $\hE$ are such that $\k \hE$ is sufficiently small,
see condition~\eqref{best approx condition on h,k} below. Then, for any $u \in H^1(\Omega)$, there exists a function $\uI \in \Vnp$ such that
\begin{equation} \label{best approximation u-uI}
\Vert u - \uI \Vert_{1,\k,\taun} \le \cBA(\k\h) \lVert u - \uPW \rVert_{1,\k,\taun} \quad \forall \uPW \in \PWtaun,
\end{equation} 
where
\begin{equation} \label{def cBA}
\cBA(t):=2 \frac{\delta}{\delta-1} (1+\CP^2 t^2) \left(\CT (\CP^2+1) t + 2\right),
\end{equation}
with $\CT$ from~\eqref{trace inequality}, $\CP$ from~\eqref{Poincare Friedrich partial}, and $\delta>1$ from condition~\eqref{best approx condition on h,k} below, remains uniformly bounded as $t \to 0$.
\end{thm}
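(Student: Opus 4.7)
My plan is to construct $u_I$ elementwise by matching degrees of freedom, verify it lies in the nonconforming space, and then bound the element contributions through the energy identity that drove the unisolvency proof.

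For each $\E \in \taun$, I would define $u_I{}_{|_\E}\in \VE$ to be the unique function (by Lemma~\ref{lemma unisolvency}) whose local degrees of freedom~\eqref{dofs} match those of $u$, i.e., $\dof_{\e,\ell}(u_I)=\frac{1}{h_e}\int_\e u \overline{\wle}\,\ds$ for every $\e\in\EE$ and $\ell \in \Je$. Equivalently, $\Pie(u_I{}_{|_\e}) = \Pie(u{}_{|_\e})$ on every edge. Since this moment-matching is done from both sides along any interior edge, for all $\we\in\PWc(\e)$ and every $\e \in \EnI$ one has $\int_\e u_I{}_{|_{\E^+}} \overline{\we}\,\ds = \int_\e u \overline{\we}\,\ds = \int_\e u_I{}_{|_{\E^-}} \overline{\we}\,\ds$, hence $\int_\e \llbracket u_I \rrbracket \cdot \n^\e \overline{\we}\,\ds = 0$. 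Thus $u_I\in\Vnp$.

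For the estimate, I would fix an arbitrary $\uPW \in \PWtaun$, write $\wE := \uPW{}_{|_\E}\in\PWE\subset\VE$, set $z:=\wE - u_I \in \VE$, and use the triangle inequality
\begin{equation*}
\lVert u - u_I\rVert_{1,\k,\E} \le \lVert u - \wE\rVert_{1,\k,\E} + \lVert z\rVert_{1,\k,\E}.
\end{equation*}
The core step is to bound $\lVert z\rVert_{1,\k,\E}$ by $\lVert u - \wE\rVert_{1,\k,\E}$, uniformly as $\k\hE\to 0$. Repeating the Green-identity computation that produced~\eqref{equation for unisolvency} for $z\in\VE$, I would get
\begin{equation*}
\lvert z\rvert_{1,\E}^2 - \k^2\lVert z\rVert_{0,\E}^2 - \im\k\lVert z\rVert_{0,\partial \E}^2 = \sum_{\e\in\EE}\int_\e z\,\overline{\gammaIK(z)_{|_\e}}\,\ds.
\end{equation*}
Since $\gammaIK(z)_{|_\e}\in\PWc(\e)$ by the definition of $\VE$, I would replace $z$ by $\Pie z = \Pie(\wE-u) = \Pie(\wE - u_I)$ (the second equality uses the moment-matching defining $u_I$), obtaining on each edge
\begin{equation*}
\int_\e z\,\overline{\gammaIK(z)_{|_\e}}\,\ds = \int_\e (\wE - u)\,\overline{\gammaIK(z)_{|_\e}}\,\ds.
\end{equation*}
Taking real and imaginary parts isolates $\lvert z\rvert_{1,\E}^2 - \k^2\lVert z\rVert_{0,\E}^2$ and $\k\lVert z\rVert_{0,\partial \E}^2$, and the boundary term on the right is estimated by Cauchy--Schwarz, the trace inequality~\eqref{trace inequality}, and Proposition~\ref{proposition best approx edge} applied to the residual $(I-\Pie)(\wE - u)$. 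To control $\lVert z\rVert_{0,\E}^2$ (needed to pass from $\lvert z\rvert_{1,\E}^2 - \k^2\lVert z\rVert_{0,\E}^2$ to $\lVert z\rVert_{1,\k,\E}^2$), I would use the Poincar\'e--Friedrichs inequality~\eqref{Poincare Friedrich partial} with $\Upsilon=e$ and observe that $\int_\e z = \int_\e(\wE-u)$ (the constant direction belongs to $\PWc(\e)$ by construction), so this boundary mean is again controlled by $\lVert u-\wE\rVert_{1,\k,\E}$.

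Putting these pieces together, I expect an inequality of the schematic form
$\lVert z\rVert_{1,\k,\E}^2 \le (C_P\k\hE)^2\lVert z\rVert_{1,\k,\E}^2 + G(\k\hE)\,\lVert u - \wE\rVert_{1,\k,\E}\,\lVert z\rVert_{1,\k,\E} + (\text{lower order})$,
where the leading factor $(1+C_P^2(\k\hE)^2)$ and the trace/projection factor $\CT(\CP^2+1)(\k\hE)+2$ emerge naturally. Under the smallness condition~\eqref{best approx condition on h,k}, which I would choose precisely to make $(C_P\k\hE)^2\le 1/\delta$ for some $\delta>1$, an absorption argument produces the multiplicative factor $\delta/(\delta-1)$. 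Summing over $\E\in\taun$ and invoking the triangle inequality once more yields~\eqref{best approximation u-uI} with $\cBA(\k\h)$ as in~\eqref{def cBA}. The main obstacle is the delicate bookkeeping of the $\k\hE$-scaling: ensuring that the pieces involving $\lVert \wE - u\rVert_{0,\e}$ do not introduce negative powers of $\k\hE$, which is precisely where Proposition~\ref{proposition best approx edge} and the orthogonality of $(I-\Pie)$ against $\gammaIK(z)_{|_\e}$ must be used (rather than a crude trace inequality on $\wE-u$).
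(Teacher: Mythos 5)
Your overall strategy is the paper's: define $\uI$ elementwise by matching the edge moments of $u$ (which automatically places $\uI$ in $\Hnc$), compare $\uI$ with an arbitrary $\wE\in\PWE\subset\VE$ via the triangle inequality, exploit the Green identity together with the fact that $\gammaIK$ of the difference lies edgewise in $\PWc(\e)$ so that the moment matching lets you trade the unknown interpolant for $u$ in the boundary pairing, and finally absorb under a smallness condition on $\k\hE$. However, there is a genuine gap at the central estimate. After your substitution you are left with
\begin{equation*}
\sum_{\e\in\EE}\int_\e (\wE-u)\,\overline{\gammaIK(z)_{|_\e}}\,\ds
=\int_{\partial\E}(\wE-u)\,\overline{\nabla z\cdot\nE}\,\ds-\im\k\int_{\partial\E}(\wE-u)\,\overline{z}\,\ds,
\end{equation*}
and you propose to bound this by Cauchy--Schwarz, the trace inequality~\eqref{trace inequality}, and Proposition~\ref{proposition best approx edge}. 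None of these controls $\lVert\nabla z\cdot\nE\rVert_{0,\partial\E}$: $z$ is a virtual function, so no inverse estimate is available; the trace inequality bounds the trace of an $H^1$ function, not of its normal derivative; and knowing that $\gammaIK(z)_{|_\e}$ belongs to the finite-dimensional space $\PWc(\e)$ gives no bound on its $L^2(\e)$ norm in terms of $\lVert z\rVert_{1,\k,\E}$. (Also note that, since $\gammaIK(z)_{|_\e}\in\PWc(\e)$, the residual $(I-\Pie)(\wE-u)$ is exactly what drops out of this pairing, so Proposition~\ref{proposition best approx edge} never enters.) The missing idea is a \emph{second} integration by parts: the pairing of $\nabla z\cdot\nE$ with $\overline{(u-\wE)}$ must be sent back into the element, yielding $\int_\E\nabla z\cdot\overline{\nabla(u-\wE)}\,\dx+\int_\E\Delta z\,\overline{(u-\wE)}\,\dx$ with $\Delta z=-\k^2 z$; these volume terms are then bounded by $H^1$-type norms via Cauchy--Schwarz (the terms $Z_1$, $Z_2$ in the paper's proof), and the only boundary term that survives carries $z$ itself rather than its normal derivative.

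A secondary point: in that surviving boundary term one must subtract an elementwise constant $\cE$ (chosen as the volume average of $u-\wE$, legitimate because the impedance trace of a constant again lies in $\PWc(\e)$) before applying~\eqref{trace inequality} and~\eqref{Poincare Friedrich full}; otherwise $\lVert u-\wE\rVert_{0,\partial\E}$ produces a factor $\hE^{-1/2}\lVert u-\wE\rVert_{0,\E}$ and precisely the negative power of $\k\hE$ you worry about at the end. You flag this danger but do not specify the mechanism that removes it.
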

\begin{proof}
Given $u \in H^1(\Omega)$, we define its ``interpolant'' $\uI$ in $\Vnp$ in terms of its degrees of freedom as follows:
\begin{equation} \label{definition TVEM interpolant}
\int_\e (\uI - u) \overline{\wle} \, \ds = 0 \quad \forall \ell \in \Je, \, \forall \e \in \EE, \, \forall \E \in \taun,
\end{equation}
where the functions $\wle$ are defined in~\eqref{plane wave edge}.

We stress that, with this definition, $\uI$ is automatically an element of~$\Hnc$ introduced in~\eqref{global non conforming space}.
Moreover, the definition~\eqref{definition TVEM interpolant} implies that the average of $u-\uI$ on every edge $e \in \EE$, $\E \in \taun$, is zero, thanks to the fact that
the space $\PWc(\e)$ contains the constants for all edges $\e$.
This, together with the Poincar\'{e}-Friedrichs inequality~\eqref{Poincare Friedrich partial}, gives, for each element $\E \in \taun$,
\begin{equation} \label{Poincare inequality}
\Vert u - \uI \Vert_{0,\E} \le \CP \hE \vert u -\uI \vert_{1,\E}.
\end{equation}
In order to obtain~\eqref{best approximation u-uI}, we start by proving local approximation estimates. To this end, let  $\E \in \taun$ be fixed. By using the triangle inequality, we obtain
\begin{equation} \label{initial triangular inequality}
\vert u - \uI \vert _{1,\E} \le \vert u - \wE \vert_{1,\E} + \vert \uI - \wE \vert_{1,\E} \quad \forall \wE \in \PWE.
\end{equation}
For what concerns the second term, by using an integration by parts, taking into account that both $\uI$ and $\wE$ belong to the kernel of the Helmholtz operator,
and by employing the definition of the impedance trace $\gammaIK$, we get, for every constant $\cE \in \mathbb C$,
\begin{equation} \label{best approx relation 1}
\begin{split}
\vert \uI - \wE \vert^2_{1,\E} &= \int_{\E} \nabla (\uI - \wE) \cdot \overline{\nabla(\uI - \wE)} \, \dx  \\
& = -\int_\E \Delta (\uI - \wE)\overline{(\uI - \wE)} \, \dx+ \int_{\partial \E} \nabla(\uI - \wE -\cE) \cdot \nE \, \overline{(\uI - \wE)} \, \ds  \\
& = \k^2 \int_\E (\uI - \wE) \overline{(\uI - \wE)} \, \dx +\int_{\partial \E} \gammaIK(\uI-\wE -\cE) \overline{(\uI-\wE)} \, \ds  \\
&\quad - \im\k \int_{\partial \E} (\uI-\wE -\cE)\overline{(\uI-\wE)} \, \ds.
\end{split}
\end{equation}
Taking now into account that $\gammaIK(\uI-\wE -\cE)_{|_e}$ belongs to the space $\PWc(\e)$ introduced in~\eqref{edge space with constant}, for each edge $e \in \EE$, 
the definition of $\uI$ in~\eqref{definition TVEM interpolant} implies
\begin{equation} \label{relation interpolant}
\int_{\partial \E} \gammaIK(\uI-\wE -\cE) \overline{(\uI-\wE)} \, \ds = \int_{\partial \E} \gammaIK(\uI-\wE -\cE) \overline{(u-\wE)} \, \ds.
\end{equation}
Using the definition of impedance traces, inserting~\eqref{relation interpolant} in~\eqref{best approx relation 1}, integrating by parts back, and using that both $\uI$ and $\wE$ belong to the kernel of the Helmholtz operator lead to
\begin{equation} \label{best approx relation 2} 
\begin{split}
\vert \uI - \wE \vert^2_{1,\E} 
& = \k^2 \int_\E (\uI - \wE) \overline{(\uI - \wE)} \, \dx +\int_{\E} \nabla(\uI-\wE) \cdot \overline{\nabla(u-\wE)} \, \dx  \\
&\quad + \int_{\E} \Delta(\uI-\wE) \overline{(u-\wE)} \, \dx + \im\k \int_{\partial \E} (\uI-\wE -\cE) \overline{(u-\uI)} \, \ds  \\
& = \k^2 \int_\E (\uI - \wE) \overline{(\uI - u)} \, \dx + \int_{\E} \nabla(\uI-\wE) \cdot \overline{\nabla(u-\wE)} \, \dx  \\ 
&\quad + \im\k \int_{\partial \E} (\uI-\wE -\cE) \overline{(u-\uI)} \, \ds =: Z_1 + Z_2 + Z_3.
\end{split}
\end{equation}
We bound the three terms on the right-hand side of~\eqref{best approx relation 2} separately.
For $Z_1$, we use the Cauchy-Schwarz and the triangle inequalities, the inequality~\eqref{Poincare inequality}, and the bound $a^2+ab \leqslant \frac{1}{2} (3a^2+b^2)$, to get
\begin{equation} \label{bound first term}
\begin{split}
\vert Z_1\vert &= \left \vert  \k^2 \int_\E (\uI- \wE) \overline{(\uI-u)} \, \dx \right \vert 
\le \k^2 \left( \Vert u - \uI \Vert^2_{0,\E} + \Vert u - \wE\Vert_{0,\E} \Vert u - \uI \Vert_{0,\E} \right) \\
& \le \k^2 \left\{ \CP^2 \hE^2 \vert u - \uI \vert^2_{1,\E} + \CP \hE \vert u - \uI \vert_{1,\E} \Vert u -\wE \Vert_{0,\E} \right\} \\
& \le \frac{\k^2}{2} \left\{ 3\CP^2 \hE^2\vert u - \uI \vert^2_{1,\E} +  \Vert u- \wE\Vert^2_{0,\E} \right\}.
\end{split}
\end{equation}
The term $Z_2$ can be bounded by applying the Cauchy-Schwarz inequality and the bound $ab \le \frac{1}{2}(a^2+b^2)$:
\begin{equation} \label{bound second term}
\begin{split}
\vert Z_2 \vert = \left\vert \int_\E \nabla(\uI - \wE) \cdot \overline{\nabla(u-\wE)} \, \dx \right\vert 
\le \frac{1}{2} \left(\vert \uI -\wE \vert^2_{1,\E} + \vert u - \wE \vert^2_{1,\E}\right).
\end{split}
\end{equation}
Finally, for the term $Z_3$, by employing the Cauchy-Schwarz and the triangle inequalities, and again the bound $a^2+ab \leqslant \frac{1}{2} (3a^2+b^2)$, we obtain
\begin{equation} \label{bound third term}
\begin{split}
\vert Z_3 \vert & = \left\vert \im\k \int_{\partial \E} (\uI - \wE -\cE) \overline{(u - \uI)} \, \ds \right\vert  \\
& \le \k \left( \Vert u - \uI \Vert^2_{0,\partial \E} + \Vert u - \wE  -\cE\Vert_{0,\partial \E} \Vert u - \uI \Vert_{0,\partial \E} \right)\\
& \le \frac{\k}{2} \left( 3\Vert u - \uI \Vert^2_{0,\partial \E} + \Vert u - \wE  -\cE\Vert^2_{0,\partial \E} \right).
\end{split}
\end{equation}
Combining the trace inequality~\eqref{trace inequality} with~\eqref{Poincare inequality} yields
\begin{equation} \label{best approx relation 3}
\Vert u - \uI \Vert^2_{0,\partial \E}  \le  \CT \left(\hE^{-1} \lVert u - \uI \rVert^2_{0,\E} + \hE \lvert u - \uI \rvert^2_{1,\E} \right)
\le \CT (\CP^2+1) \hE \vert u - \uI \vert^2_{1,\E}.
\end{equation}
Similarly, making use of the trace inequality~\eqref{trace inequality} and the Poincar\'{e}-Friedrichs inequality~\eqref{Poincare Friedrich full}, after selecting $\cE=\frac{1}{|\E|} \int_\E (u-\wE) \, \dx$, leads to 
\begin{equation} \label{best approx relation 4}
\begin{split}
\Vert u - \wE -\cE \Vert^2_{0,\partial \E}  &\le \CT \left(\hE^{-1} \lVert u - \wE  -\cE\rVert^2_{0,\E} + \hE \lvert u - \wE \rvert^2_{1,\E} \right) \\
&\le \CT (\CP^2+1) \hE \lvert u - \wE \rvert^2_{1,\E}.
\end{split}
\end{equation}
By plugging~\eqref{best approx relation 3} and~\eqref{best approx relation 4} into~\eqref{bound third term}, we obtain
\begin{equation} \label{bound third term part 2}
\vert Z_3 \vert \le 
\frac{1}{2} \CT (\CP^2+1) \k\hE \left( 3\vert u - \uI \vert^2_{1,\E} + \vert u - \wE \vert^2_{1,\E} \right).
\end{equation}
Inserting the three bounds~\eqref{bound first term},~\eqref{bound second term}, and~\eqref{bound third term part 2} into~\eqref{best approx relation 2}, and moving the contribution $\frac{1}{2} \vert \uI -\wE \vert^2_{1,\E}$ to the left-hand side, yield
\begin{equation} \label{best approx relation 5}
\begin{split}
\frac{1}{2} \vert \uI - \wE \vert^2_{1,\E} &\le
\frac{3}{2} \k\hE \left( \CP^2 \k\hE +  \CT (\CP^2+1) \right) \vert u - \uI \vert^2_{1,\E} \\
&\quad + \frac{\k^2}{2} \Vert u- \wE\Vert^2_{0,\E}  
+ \frac{1}{2} \left(1+ \CT (\CP^2+1) \k\hE \right)\vert u - \wE \vert^2_{1,\E}.
\end{split}
\end{equation}
From~\eqref{initial triangular inequality}, the bound $(a+b)^2 \leqslant 2(a^2+b^2)$, and~\eqref{best approx relation 5}, we get, further taking the definition of the norm $\lVert \cdot \rVert_{1,\k,\E}$ into account,
\begin{equation*} 
\begin{split}
\vert u - \uI \vert^2 _{1,\E} 
&\le 2 \vert u - \wE \vert^2_{1,\E} + 2 \vert \uI - \wE \vert^2_{1,\E} \\
&\le 6\k\hE (\CP^2 \k\hE+\CT (\CP^2+1))  \vert u - \uI \vert^2_{1,\E} + 2\k^2 \lVert u - \wE  \rVert^2_{0,\E} \\
&\quad + 2\left(\CT (\CP^2+1) \k\hE + 2\right) \lvert u - \wE \rvert^2_{1,\E} \\
& \le 6\k\hE (\CP^2 \k\hE+\CT (\CP^2+1))  \vert u - \uI \vert^2_{1,\E} + 2\left(\CT (\CP^2+1) \k\hE + 2\right) \lVert u - \wE \rVert^2_{1,\k,\E}.
\end{split}
\end{equation*}
Under the assumption that $\k$ and $\hE$ are such that
\begin{equation} \label{best approx condition on h,k}
6\k\hE (\CP^2 \k\hE+\CT (\CP^2+1)) \le \frac{1}{\delta}
\end{equation}
for some $\delta>1$, we obtain
\begin{equation} \label{local estimate u-uI in H1 seminorm}
\begin{split}
\vert u - \uI \vert^2 _{1,\E} 
&\le 2\frac{\delta}{\delta-1} \left(\CT (\CP^2+1) \k\hE + 2\right) \lVert u - \wE \rVert^2_{1,\k,\E} \quad \forall \wE \in \PWE.
\end{split}
\end{equation}
From the definition of the norm $\lVert \cdot \rVert_{1,\k,\E}$ in~\eqref{definition local weighted Sobolev norm}, inequality~\eqref{Poincare inequality}, and the estimate~\eqref{local estimate u-uI in H1 seminorm}, we get
\begin{equation*}
\begin{split}
\lVert &u-\uI \rVert_{1,\k,\E}^2
=\lvert u-\uI \rvert_{1,\E}^2 + \k^2 \lVert u-\uI \rVert_{0,\E}^2
\le (1+\CP^2 (\k \hE)^2) \vert u -\uI \vert^2_{1,\E} \\
&\le 2\frac{\delta}{\delta-1} (1+\CP^2 (\k \hE)^2) \left(\CT (\CP^2+1) \k\hE + 2\right) \lVert u - \wE \rVert^2_{1,\k,\E}.
\end{split}
\end{equation*}
The assertion follows by summing over all elements $\E \in \taun$ and taking the square root.
\end{proof}
By combining Theorem \ref{theorem local h-best approximation plane waves} with Theorem \ref{theorem best approximation result}, we have the following best approximation error bound.
\begin{cor} \label{corollary balancing errors}
Under the assumptions of Theorems \ref{theorem local h-best approximation plane waves} and \ref{theorem best approximation result},
for $u \in H^{s+1}(\Omega)$, $s \in \R_{\ge 1}$, satisfying $\Delta u +\k^2 u=0$,  the following holds true:
\begin{equation*} 
\begin{split}
\Vert u - \uI \Vert_{1,\k,\taun} 
\le C^\ast(\k\h) \h^{\zeta} \lVert u \rVert_{\zeta+1,\k,\taun},
\end{split}
\end{equation*}
where $\zeta:=\min(q,s)$ and
\begin{equation*}
C^\ast(t):=C e^{\left( \frac{7}{4}-\frac{3}{4}\rho \right)t} \left( 1+t^{q+9} \right) 2 \frac{\delta}{\delta-1} (1+\CP^2 t^2) \left(\CT (\CP^2+1) t + 2\right),
\end{equation*}
with $C>0$ depending on $q$, $\rho$, $\rho_0$, and $\{ \dl \}_{\ell=1,\dots,\p}$, but independent of $\k$, $\h$, and~$u$, and with $\delta$ being as in Theorem~\ref{theorem best approximation result}.
\end{cor}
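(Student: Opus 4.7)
The corollary is obtained by chaining the two approximation results already at our disposal. Theorem~\ref{theorem best approximation result} produces a Trefftz-VE function $\uI \in \Vnp$ satisfying
\[
\Vert u - \uI \Vert_{1,\k,\taun} \le \cBA(\k \h) \, \lVert u - \uPW \rVert_{1,\k,\taun} \quad \forall \uPW \in \PWtaun.
\]
The strategy is to choose $\uPW$ element by element so as to minimize the right-hand side, invoking the local plane wave approximation estimate of Theorem~\ref{theorem local h-best approximation plane waves}, and then to collect the two constants into $C^\ast(\k\h)$.

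\textbf{Key steps.} Let $\zeta := \min(q,s) \ge 1$. For each $\E \in \taun$, the restriction $u_{|_\E} \in H^{s+1}(\E)$ is again a Helmholtz solution, so Theorem~\ref{theorem local h-best approximation plane waves} applied with $L = \zeta$ and $j = 1$ yields $\wE \in \PWE$ with
\[
\lVert u - \wE \rVert_{1,\k,\E} \le \cPW(\k \hE) \, \hE^{\zeta} \, \lVert u \rVert_{\zeta+1,\k,\E},
\]
where, evaluating~\eqref{cPW definition} at $j=1$, $\cPW(t) = C e^{(7/4 - 3/4\rho)t}(1 + t^{q+9})$. Since $\PWtaun = \prod_{\E \in \taun} \PWE$, the elementwise choices $\{\wE\}_{\E \in \taun}$ assemble into a single $\uPW \in \PWtaun$.

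\textbf{Assembly.} Both factors of $\cPW(t)$ are monotonically non-decreasing in $t \ge 0$, hence $\cPW(\k\hE) \le \cPW(\k\h)$, and $\hE \le \h$ by the definition of $\h$. Squaring the local bound, summing over $\E \in \taun$, extracting the uniform factors $\cPW(\k\h)^2$ and $\h^{2\zeta}$, and taking square roots yields
\[
\lVert u - \uPW \rVert_{1,\k,\taun} \le \cPW(\k\h) \, \h^{\zeta} \, \lVert u \rVert_{\zeta+1,\k,\taun}.
\]
Inserting this into the bound from Theorem~\ref{theorem best approximation result} gives the claim with $C^\ast(\k\h) = \cPW(\k\h) \cdot \cBA(\k\h)$, which matches the formula in the statement upon expanding~\eqref{def cBA}.

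\textbf{Obstacles.} There is no substantive difficulty: the argument is bookkeeping on top of two results already established. The only minor points requiring attention are verifying that the elementwise best plane waves assemble into an admissible global element of $\PWtaun$ (immediate from the definition of $\PWtaun$ as a Cartesian product), checking that the hypothesis $\k\hE$ small required by Theorem~\ref{theorem best approximation result} is compatible with the hypothesis $\k\hE$ bounded needed to keep $\cPW$ under control, and confirming that the specialization $j = 1$ in~\eqref{cPW definition} produces precisely the exponent $q+9$ appearing in~$C^\ast$.
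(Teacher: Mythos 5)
Your proposal is correct and follows exactly the route the paper takes: the corollary is stated there as an immediate combination of Theorem~\ref{theorem local h-best approximation plane waves} (applied elementwise with $j=1$ and $L=\zeta$) and Theorem~\ref{theorem best approximation result}, with $C^\ast = \cPW \cdot \cBA$. Your bookkeeping on the exponent $q+9$ and the monotonicity of $\cPW$ is sound, so nothing further is needed.
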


\subsection{Abstract error analysis} \label{subsection abstract error analysis}
In this section, we prove existence and uniqueness of the discrete solution to the method~\eqref{HH problem weak discrete formulation},
and derive \textit{a priori} error bounds, provided that the mesh size is sufficiently small.

To this purpose, we consider a variational formulation of~\eqref{HH continuous problem} obtained by testing~\eqref{HH continuous problem} with functions in $\Vnp$. Given $u$ the exact solution to problem~\eqref{HH continuous problem}, we have, for all functions $\vn \in \Vnp$, 
\begin{equation*}
0=\sum_{K \in \taun} \int_{K} (-\Delta u -k^2 u) \overline{\vn} \, \dx
=\sum_{K \in \taun} \left[ \int_{K} \left( \nabla u \cdot \overline{\nabla \vn} -k^2 u \overline{\vn} \right) \, \dx - \int_{\partial K} (\nabla u \cdot \textbf{\textup{n}}_K) \overline{\vn} \, \ds \right]
\end{equation*}
and therefore
\begin{equation} \label{exact discrete method}
\sum_{K \in \taun} a^K(u,\vn) - \Nn(u,\vn) + \im\k \int_{\partial \Omega} u \overline{\vn} \, \ds = \int_{\partial \Omega} g \overline{\vn} \, \ds,
\end{equation}
where the nonconformity term $\Nn(\cdot,\cdot)$ is defined as
\begin{equation} \label{definition Nn}
\Nn(u,\vn):=\sum_{K \in \taun}  \int_{\partial K \backslash \partial \Omega} (\nabla u \cdot \textbf{\textup{n}}_K) \, \overline{\vn} \, \ds
= \sum_{e \in \EnI} \int_{e} \nabla u \cdot \overline{\llbracket \vn \rrbracket} \, \ds.
\end{equation}
We have now all the ingredients to prove the following abstract error result.

\begin{thm} \label{theorem abstract theory}
Let the assumptions (\textbf{G1})-(\textbf{G3}), (\textbf{D1}) and (\textbf{A1}) hold true; moreover, assume that the solution $u$ to~\eqref{HH problem weak formulation} belongs to $H^2(\Omega)$.
Further, let the number of plane waves be~$\p=2q+1$, $q\in \mathbb N_{\ge 2}$, and the local stabilization forms~$\SE(\cdot,\cdot)$ be such that the following properties are valid:
\begin{itemize}
\item \textup{(local discrete continuity)} there exists a constant~$\gamma_h>0$ such that
\begin{equation} \label{theorem ass continuity}
\vert \anE(\vn,\zn) \vert  \le \gamma_\h \lVert \vn \rVert_{1,\k,\E} \lVert \zn \rVert_{1,\k,\E} \quad \forall \vn, \zn \in \VE,\, \forall \E \in \taun;
\end{equation}
\item \textup{(discrete G\r arding inequality)} there exists a constant~$\alpha_h>0$ such that
\begin{equation} \label{theorem ass Garding}
\Re[\bn(\vn,\vn)] + 2\k^2 \lVert \vn \rVert_{0,\Omega}^2  \ge \alpha_h \lVert \vn \rVert^2_{1,\k,\taun} \quad \forall \vn \in \Vnp.
\end{equation}
\end{itemize}
Then, provided that~$\k$ and~$\h$ are chosen such that~$\k^2 \h$ is sufficiently small, see condition~\eqref{h small condition} below, the method~\eqref{HH problem weak discrete formulation} admits a unique solution~$\un \in \Vnp$ which satisfies
\begin{equation} \label{abstract error estimate}
\begin{split}
\Vert u - \un \Vert_{1,\k,\taun} 
&\lesssim \aleph_1(\k,\h) \Vert u - \uPW \Vert_{1,\k,\taun} 
+ \h \, \aleph_2(\k,\h) \vert u - \uPW \vert_{2,\taun} \\
& \quad + \h^{\frac{1}{2}} \, \aleph_2(\k,\h) \Vert \g - \Pib \g \Vert_{0,\partial \Omega} \quad \forall \uPW \in \PWtaun,
\end{split}
\end{equation}
with 
\begin{equation} \label{alephs}
\begin{split}
\aleph_1(\k,\h)&:=\left\{\frac{(\k\h+\gamma_h+1) (\k  (\varsigma(\k,\h)+\vartheta(\k,\h)) + \cBA(\k\h) +1)}{\alpha_h} +\cBA(\k\h)  \right\}, \\
\aleph_2(\k,\h)&:=\frac{\k (1 + \vartheta(\k,\h)) + 1}{\alpha_h},
\end{split}
\end{equation}
where $\Pib$ is defined in~\eqref{projector boundary L2}, the hidden constants in~\eqref{abstract error estimate} are independent of $\h$ and $\k$, and
\begin{equation} \label{definition beta}
\varsigma(\k,\h):=(1+\k\h)(1+\dOmega\k)\h, \quad 
\vartheta(\k,\h):=\cBA(\k\h) \varsigma(\k,\h),
\end{equation}
$\cBA(\k\h)$ being given in~\eqref{def cBA} and $\dOmega$ a positive constant depending only on $\Omega$.
\end{thm}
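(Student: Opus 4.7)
The plan is to combine a Strang-type argument for nonconforming methods with an Aubin--Nitsche duality step to absorb the $L^2$ perturbation forced by the discrete G\r arding inequality~\eqref{theorem ass Garding}. Existence and uniqueness of $\un$ will then follow a posteriori from the error bound applied to the homogeneous problem $\g=0$: since~\eqref{HH problem weak discrete formulation} is a square linear system, injectivity of $\bn(\cdot,\cdot)$ on $\Vnp$ is enough.

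\textbf{Primal step.} Fix the interpolant $\uI \in \Vnp$ from Theorem~\ref{theorem best approximation result} and set $\deltan:=\uI-\un$. Since $\bn(\deltan,\deltan)=\bn(\uI,\deltan)-\fn(\deltan)$, the G\r arding inequality~\eqref{theorem ass Garding} gives
\begin{equation*}
\alphah \lVert \deltan \rVert_{1,\k,\taun}^2 \le \Re[\bn(\uI,\deltan)-\fn(\deltan)] + 2\k^2 \lVert \deltan \rVert_{0,\Omega}^2.
\end{equation*}
I would then insert an arbitrary $\uPW \in \PWtaun$ and split $\bn(\uI,\deltan)$ into $\an(\uI-\uPW,\deltan)+\sum_\E \aE(\uPW,\deltan)+\im\k\int_{\partial\Omega}\Pib\uI\,\overline{\Pib\deltan}\,\ds$ using the plane wave consistency~\eqref{pw consistency}. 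Testing the exact identity~\eqref{exact discrete method} against $\deltan$ replaces $\sum_\E \aE(u,\deltan)$ by $\int_{\partial\Omega}\g\overline{\deltan}\,\ds - \im\k\int_{\partial\Omega}u\,\overline{\deltan}\,\ds + \Nn(u,\deltan)$. Recombining, the residual reduces to four contributions controlled by, respectively, the continuity~\eqref{theorem ass continuity} applied to $\uI-\uPW$ combined with Theorem~\ref{theorem best approximation result}, the trace inequality~\eqref{trace inequality}, the boundary projection error $\lVert \g-\Pib \g\rVert_{0,\partial\Omega}$, and the nonconformity term. For the last one I would insert the edgewise projection onto $\PWc(\e)$ inside $\llbracket \cdot \rrbracket$ to exploit the defining constraint of $\Hnc$, then invoke Proposition~\ref{proposition best approx edge} and the $H^2$-regularity of $u$ to generate the $\h\,\lvert u-\uPW\rvert_{2,\taun}$ contribution.

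\textbf{Duality for the $L^2$ piece.} To handle $\k^2\lVert\deltan\rVert_{0,\Omega}^2$ I would introduce $z\in H^2(\Omega)$ as the solution of the adjoint impedance problem with datum $\deltan$; convexity of $\Omega$ yields $\lVert z \rVert_{2,\k,\Omega}\lesssim (1+\dOmega\k)\lVert\deltan\rVert_{0,\Omega}$. Its interpolant $z_I\in\Vnp$ supplied by Theorems~\ref{theorem best approximation result} and~\ref{theorem local h-best approximation plane waves} satisfies $\lVert z-z_I\rVert_{1,\k,\taun}\lesssim \cBA(\k\h)(1+\k\h)\h\,\lVert\deltan\rVert_{0,\Omega}$, which is precisely where the factors $\varsigma(\k,\h)$ and $\vartheta(\k,\h)$ in~\eqref{definition beta} originate. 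Using $\overline z$ as a witness in a continuous identity mirroring~\eqref{exact discrete method}, subtracting the discrete one, and estimating each residual by the same four-term decomposition as in the primal step (now with the roles of $u$ and $z$ swapped) produces an estimate for $\lVert\deltan\rVert_{0,\Omega}$ in terms of $\lVert u-\uPW\rVert_{1,\k,\taun}$, $\h\,\lvert u-\uPW\rvert_{2,\taun}$, and $\h^{1/2}\lVert \g-\Pib \g\rVert_{0,\partial\Omega}$, with exactly the prefactors that make up $\aleph_1,\aleph_2$.

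\textbf{Conclusion and main obstacle.} Plugging the dual estimate into the primal one, absorbing $\lVert\deltan\rVert_{1,\k,\taun}$ on the left under the smallness assumption~\eqref{h small condition}, and applying the triangle inequality $\lVert u-\un\rVert_{1,\k,\taun}\le \lVert u-\uI\rVert_{1,\k,\taun}+\lVert\deltan\rVert_{1,\k,\taun}$ together with Theorem~\ref{theorem best approximation result} yields~\eqref{abstract error estimate} with the prefactors in~\eqref{alephs}. The main obstacle is the bookkeeping in the duality step: one must track how plane-wave consistency~\eqref{pw consistency}, the boundary projection $\Pib$, and the nonconformity term $\Nn$ interact when the dual solution plays the role of the continuous witness, and quantify precisely how $\k\h$ enters each consistency piece so that~\eqref{h small condition} is sufficient to close the estimate without spoiling the $\k$-dependence encoded in $\aleph_1,\aleph_2$.
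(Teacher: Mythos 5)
Your proposal follows essentially the same route as the paper's proof: a Strang-type argument based on the discrete G\r arding inequality, the exact identity~\eqref{exact discrete method} tested against $\deltan$, plane wave consistency, edgewise projections to exploit the nonconformity constraint, and a Schatz-style duality step with the adjoint impedance problem to absorb the $\k^2\lVert\deltan\rVert_{0,\Omega}^2$ term, with existence and uniqueness deduced a posteriori. The only cosmetic differences are the sign convention for $\deltan$ and that the paper runs the duality argument with datum $u-\un$ rather than $\deltan$ (after first reducing $II$ to $\k\lVert u-\un\rVert_{0,\Omega}$ via the triangle inequality), which does not change the structure of the estimate.
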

\begin{proof}
We prove the error bound~\eqref{abstract error estimate} under a condition on $\k^2 \h$ in five steps. Existence and uniqueness of discrete solutions, under the same assumption on $\k^2\h$, will follow as in \cite{schatz1974observation}.

\medskip
\noindent\textit{Step~1: Triangle inequality:}
Let $\un$ satisfy~\eqref{HH problem weak discrete formulation}. By the triangle inequality, we get
\begin{equation} \label{triangular inequality in abstract analysis}
\Vert u - \un \Vert_{1,\k,\taun} \le \Vert u - \uI \Vert_{1,\k,\taun} + \Vert \un-\uI \Vert_{1,\k,\taun},
\end{equation}
where $\uI \in \Vnp$ is defined as in~\eqref{definition TVEM interpolant}.
The first term on the right-hand side of~\eqref{triangular inequality in abstract analysis} can be bounded by using Theorem~\ref{theorem best approximation result}. We focus on the second one.
By setting~$\deltan:=\un-\uI \in \Vnp$ and using the discrete G\r arding inequality~\eqref{theorem ass Garding}, we obtain
\begin{equation} \label{applying Garding}
\alpha_h \Vert \deltan \Vert^2_{1,\k,\taun} \le \Re\left[ \bn(\deltan,\deltan) \right] + 2\k^2 \Vert \deltan\Vert_{0,\Omega}^2 =: I + II.
\end{equation}
\textit{Step~2: Estimate of the term~$I$ in~\eqref{applying Garding}:}
The identity in~\eqref{HH problem weak discrete formulation}, the definitions of $\bn(\cdot,\cdot)$ in~\eqref{definition bn}, of $F_\h(\cdot)$ in~\eqref{discrete rhs}, and of the projector $\Pib$ in~\eqref{projector boundary L2},
together with the plane wave consistency property~\eqref{pw consistency} yield
\[
\begin{split}
\bn(\deltan,\deltan) 	
&= \bn(\un-u_I,\deltan) = \bn(\un,\deltan) - \bn(u_I,\deltan) \\
&= \int_{\partial \Omega} g \overline{(\Pib\deltan)} \, \ds - \an(u_I,\deltan) - \im\k \int_{\partial \Omega} (\Pib u_I) \overline{(\Pib\deltan)} \, \ds \\
&= \int_{\partial \Omega} (\Pib g) \overline{\deltan} \, \ds - \sum_{K \in \taun} \anE(u_I,\deltan) - \im\k \int_{\partial \Omega} (\Pib u_I) \overline{(\Pib\deltan)} \, \ds \\
&= \int_{\partial \Omega} (\Pib g-g) \overline{\deltan} \, \ds + \int_{\partial \Omega} g \overline{\deltan} \, \ds - \sum_{K \in \taun} \anE(u_I-\uPW,\deltan) - \sum_{K \in \taun} a^K(\uPW,\deltan) \\
&\quad \quad- \im\k \int_{\partial \Omega} (\Pib u_I) \overline{\deltan} \, \ds,\\
\end{split}
\]
where~$\uPW \in \PWtaun$; whence, by applying the identity~\eqref{exact discrete method}, we get
\[
\begin{split}
\bn(\deltan,\deltan) 	
&= \int_{\partial \Omega} (\Pib g-g) \overline{\deltan} \, \ds + \sum_{K \in \taun}  a^K(u,\deltan) - \Nn(u,\deltan) + \im\k \int_{\partial \Omega} u \overline{\deltan} \, \ds \\ 
&\quad \quad - \sum_{K \in \taun} \anE(u_I-\uPW,\deltan) - \sum_{K \in \taun} a^K(\uPW,\deltan) - \im\k \int_{\partial \Omega} (\Pib u_I) \overline{\deltan} \, \ds \\
& = \sum_{\E \in \taun} \aE(u - \uPW, \deltan) - \sum_{\E \in \taun} \anE(\uI-\uPW,\deltan) +  \im\k \int_{\partial \Omega} (u - \Pib \uI) \overline {\deltan} \, \ds \\
&\quad \quad + \int_{\partial \Omega} (\Pib \g - \g) \overline{\deltan} \, \ds -\Nn(u,\deltan) =: R_1 + R_2 + R_3 + R_4 + R_5.
\end{split}
\]
We note that
\begin{equation} \label{Reb bound five terms}
I = \Re\left[ \bn(\deltan, \deltan) \right] \le \vert \bn(\deltan,\deltan) \vert \le \vert R_1 \vert + \vert R_2 \vert + \vert R_3 \vert + \vert R_4 \vert + \vert R_5 \vert,
\end{equation}
and we proceed  by bounding each of the five terms appearing on the right-hand side of~\eqref{Reb bound five terms}.
The term $R_1$ can be bounded by using the continuity of the local continuous sesquilinear forms:
\begin{equation} \label{bound R_1}
\vert R_1 \vert = \bigg| \sum_{\E \in \taun} \aE(u - \uPW, \deltan) \bigg| \le \Vert u - \uPW \Vert_{1,\k,\taun} \Vert \deltan \Vert_{1,\k,\taun}.
\end{equation}
For $R_2$, we make use of the local discrete continuity assumption~\eqref{theorem ass continuity}:
\begin{equation} \label{bound R_2}
\begin{split}
\vert R_2 \vert &\le \bigg| \sum_{\E \in \taun} \anE(\uI-\uPW,\deltan) \bigg| \le \gamma_h \Vert \uI - \uPW \Vert_{1,\k,\taun} \Vert \deltan \Vert_{1,\k, \taun}  \\
&\le \gamma_h \left\{ \Vert u - \uI \Vert_{1,\k,\taun} + \Vert u - \uPW \Vert_{1,\k,\taun} \right\} \Vert \deltan \Vert_{1,\k,\taun}.
\end{split}
\end{equation}
For the term $R_3$, we consider the following splitting:
\begin{equation} \label{splitting of R_3}
\begin{split}
R_3 &= \im\k \int_{\partial \Omega} (u - \Pib \uI) \overline{\deltan} \, \ds = \im\k \int_{\partial \Omega} (u - \Pib u) \overline{\deltan} \, \ds + \im\k \int_{\partial \Omega} \Pib (u - \uI) \overline{\deltan} \, \ds \\
&=: R_3^A+R_3^B.
\end{split}
\end{equation}
By using the definition and the properties of the $L^2$ projector $\Pib$ in~\eqref{projector boundary L2}, and by applying the $L^2(e)$, for all $e \in \Enb$, and the $\ell^2$ Cauchy-Schwarz inequalities, we derive
\begin{equation} \label{initial bound R_3}
\begin{split}
\vert R_3^A \vert &= \left \vert \im\k \int_{\partial \Omega} (u-\Pib u)\overline{\deltan} \, \ds \right \vert 
= \left \vert \im\k \sum_{\e \in \Enb} \int_{\e} (u-\Pie u)\overline{(\deltan-c)} \, \ds \right \vert \\
&\le \k \sum_{\e \in \Enb} \Vert u - \Pie u \Vert_{0,\e} \Vert \deltan - c \Vert_{0,\e}
\le \k \left( \sum_{\e \in \Enb} \Vert u - \Pie u \Vert^2_{0,\e} \right)^{\frac{1}{2}} \left(\sum_{\e \in \Enb} \Vert \deltan - c \Vert^2_{0,\e} \right)^{\frac{1}{2}},
\end{split}
\end{equation}
for any edgewise complex constant function $c$.

We bound the two terms on the right-hand side of~\eqref{initial bound R_3} as follows.
Given $e \in \Enb$ an arbitrary edge on $\partial \Omega$, using~\eqref{bound u-Piu} and the definition of the norm $\lVert \cdot \rVert_{1,k,\E}$, we have
\begin{equation*} 
\Vert u - \Pie u \Vert_{0,\e} \lesssim \hE^{\frac{1}{2}} \vert u - \wE \vert_{1,\E} \le \hE^{\frac{1}{2}} \Vert u - \wE \Vert_{1,\k,\E} \quad \forall \wE \in \PWE,
\end{equation*}
where $\E$ is the unique polygon in $\taun$ such that $\e \in \EE \cap \Enb$.

Concerning the second term on the right-hand side of~\eqref{initial bound R_3}, we make use of the trace inequality~\eqref{trace inequality} and of the Poincar\'e-Friedrichs inequality~\eqref{Poincare Friedrich full}, choosing $c= \frac{1}{\vert \E \vert} \int_\E \deltan \, \dx \in \C$, to obtain
\begin{equation} \label{Poincare trace trick}
\rVert \deltan - c \lVert_{0,\e}  \le \rVert \deltan - c \lVert_{0,\partial \E} 
\lesssim \hE^{-\frac{1}{2}} \lVert \deltan -c \rVert_{0,\E} +  \hE^{\frac{1}{2}} \vert \deltan \vert_{1,\E} 
\lesssim \hE^{\frac{1}{2}} \vert \deltan \vert_{1,\E} 
\le \hE^{\frac{1}{2}} \Vert \deltan \Vert_{1,\k,\E}.
\end{equation}
Thus,
\begin{equation} \label{estimate R_3^A}
\vert R_3^A \vert
\lesssim \k\h \lVert u - \uPW \rVert_{1,\k, \taun} \Vert \deltan \Vert_{1,k,\taun} \quad \forall \uPW \in \PWtaun.
\end{equation}
The term $R_3^B$ on the right-hand side of~\eqref{splitting of R_3} can be bounded in a similar fashion. More precisely, we first note that, due to the definitions of $\Pie$ in $\eqref{projector edge L2}$ and of $\uI$ in~\eqref{definition TVEM interpolant}, we have
\begin{equation} \label{madness}
\int_{\e} \Pie(u-\uI) \, \overline{c} \, \ds =\int_{\e} (u-\uI) \, \overline{c} \, \ds =0,
\end{equation}
for all complex constant functions $c$. Owing to~\eqref{madness}, it follows
\begin{equation} \label{R_3^B}
\begin{split}
\vert R_3^B\vert &= \k \left\vert \int_{\partial \Omega} \Pib(u-\uI) \overline{\deltan} \, \ds \right\vert 
= \k \left\vert \sum_{\e \in \Enb} \int_{\e} \Pie(u-\uI) \overline{(\deltan - c)} \, \ds \right\vert \\
&\le \k \sum_{\e \in \Enb} \Vert u - \uI \Vert_{0,\e} \Vert \deltan-c \Vert_{0,\e}
\le \k \left( \sum_{\e \in \Enb} \Vert u - \uI \Vert^2_{0,\e} \right)^{\frac{1}{2}} \left(\sum_{\e \in \Enb} \Vert \deltan - c \Vert^2_{0,\e} \right)^{\frac{1}{2}},
\end{split}
\end{equation} 
where the stability of the projector $\Pie$ in the $L^2(\e)$ norm was used in the first inequality.

Due to the trace inequality~\eqref{trace inequality}, the Poincar\'e-Friedrichs inequality for $u-\uI$ in~\eqref{Poincare inequality}, and the definition of the norm $\Vert \cdot \Vert_{1,\k,\E}$, we obtain
\[
\Vert u - \uI \Vert_{0,\e} \le \Vert u - \uI \Vert_{0,\partial \E} \lesssim \hE^{\frac{1}{2}} \vert u - \uI \vert_{1,\E} \le \hE^{\frac{1}{2}} \Vert u - \uI \Vert_{1,\k,\E}.
\]
The term $\Vert \deltan - c \Vert^2_{0,\e}$ in~\eqref{R_3^B} can be estimated, for all boundary edges $\e \in \Enb$, as in~\eqref{Poincare trace trick}, by fixing $c$ as the average of $\deltan$ over $\E$, with $\E$ being the unique polygon in $\taun$ such that $\e \in \EE \cap \Enb$.
After inserting these estimates into~\eqref{R_3^B}, one gets
\begin{equation} \label{estimate R_3^B}
\vert R_3^B \vert
\lesssim \k\h \lVert u - \uI \rVert_{1,\k, \taun} \Vert \deltan \Vert_{1,k,\taun}.
\end{equation}
Combining~\eqref{estimate R_3^A} and~\eqref{estimate R_3^B} leads to
\begin{equation} \label{bound R_3}
\vert R_3 \vert \lesssim \k\h \left( \Vert u - \uPW \Vert_{1,\k, \taun} +\Vert u - \uI \Vert_{1,\k,\taun}\right) \Vert \deltan \Vert_{1,k,\taun}.
\end{equation}
For the term $R_4$, by mimicking what was done in~\eqref{initial bound R_3} and~\eqref{Poincare trace trick}, i.e., making appear an edgewise constant on $\partial \Omega$ and using the Poincar\'e inequality, we get
\begin{equation} \label{bound R_4}
\vert R_4 \vert = \left\vert \int_{\partial \Omega} (\g- \Pib \g) \overline{\deltan} \, \ds \right\vert 
\lesssim \h^{\frac{1}{2}} \Vert \g- \Pib \g \Vert_{0,\partial \Omega} \Vert \deltan \Vert_{1,\k,\taun}.
\end{equation}
Finally, we have to study the nonconformity term $R_5$ on the right-hand side of~\eqref{Reb bound five terms}.

Using the definitions of the nonconforming space $\Vnp$ in~\eqref{global Trefftz space} and of the projector $\Pie$ in~\eqref{projector edge L2}, together with the Cauchy-Schwarz inequality, yield
\begin{equation} \label{initial bound R_5}
\begin{split}
\vert R_5 \vert &= \left\vert \Nn (u, \deltan) \right\vert 
= \left\vert \sum_{\e \in \mathcal E^I_n} \int_\e \nabla u \cdot \overline{\llbracket \deltan \rrbracket} \, \ds \right\vert
= \left\vert \sum_{\e \in \mathcal E_n^I} \int_\e (\nabla u - \Pie (\nabla u)) \cdot \normale \overline{(\deltan^+ - \deltan^- )} \, \ds \right\vert  \\
&\le \left\vert \sum_{\e \in \mathcal E_n^I} \int_\e (\nabla u - \Pie (\nabla u)) \cdot \normale \overline{(\deltan^+  -{c}^+)} \, \ds \right\vert
+\left\vert \sum_{\e \in \mathcal E_n^I} \int_\e (\nabla u - \Pie (\nabla u)) \cdot \normale \overline{(\deltan^-  - {c}^-)} \, \ds \right\vert \\
& \le  \left( \sum_{\e \in \EnI} \Vert \nabla u \cdot \normale - \Pie(\nabla u \cdot \normale) \Vert^2_{0,\e}   \right)^{\frac{1}{2}} \left( \sum_{\e \in \EnI} \Vert \delta_\h^+  - c^+  \Vert^2_{0,\e}   \right)^{\frac{1}{2}}\\
& \quad  + \left( \sum_{\e \in \EnI} \Vert \nabla u \cdot \normale - \Pie(\nabla u \cdot \normale) \Vert^2_{0,\e}   \right)^{\frac{1}{2}} \left( \sum_{\e \in \EnI} \Vert \delta_\h^-  - c^-  \Vert^2_{0,\e}   \right)^{\frac{1}{2}},\\
\end{split}
\end{equation}
for any edgewise complex constant functions ${c}^+$ and ${c}^- \in \C$.

After applying the Cauchy-Schwarz inequality to both terms on the right-hand side of~\eqref{initial bound R_5}, we bound the resulting terms as follows. We begin with the bounds on the terms involving $\nabla u$.
Denoting by $\E^+$ and $\E^-$ the two polygons in $\taun$ with $\e \in \mathcal{E}^{\E^+} \cap \mathcal{E}^{\E^-}$,
owing to the trace inequality~\eqref{trace inequality} and the inequality~\eqref{Poincare Friedrich full}
for any $\wEDiamond \in \PW (\E^\pm)$ and $(\mathbf{c}^\pm)_i=\frac{1}{\vert \E^\pm \vert} \int_{\E^\pm} \nabla(u-\wEDiamond) \, \dx$, $i=1,2$, it holds
\begin{equation*} 
\begin{split}
\sum_{\e \in \EnI}   \Vert \nabla u \cdot \normale - \Pie (\nabla u \cdot \normale) \Vert^2_{0,\e} 
&\le \sum_{\e \in \EnI} \Vert (\nabla u - \nabla \wEDiamond  - \textbf{c}^\pm)\cdot \normale \Vert^2_{0,\e}  \\
& \le \sum_{\E \in \taun} \Vert (\nabla u -\nabla \wEDiamond -\textbf{c}^\pm) \cdot \normale \Vert_{0,\partial \E}^2 \\
&\lesssim \sum_{\E \in \taun} \hE \vert u  - \w \vert^2_{2,\E}.
\end{split}
\end{equation*}
For the terms involving $\delta_\h$, we take ${c}^\pm=\frac{1}{\vert \E^\pm \vert} \int_{\E^\pm} \deltan^{\pm} \, \dx$, follow the computations in~\eqref{Poincare trace trick}, and obtain
\begin{equation*}
\Vert \deltan^{\pm}   - {c}^{\pm} \Vert^2_{0,\e}
\lesssim \h \Vert \deltan^{\pm} \Vert^2_{1,\k,\E^{\pm}},
\end{equation*}
Thus, a bound on the nonconformity term $R_5$ is given by
\begin{equation} \label{bound R_5}
\vert R_5 \vert = \left\vert \Nn (u, \deltan)  \right\vert \lesssim \h \vert u - \uPW \vert_{2,\taun} \Vert \deltan \Vert_{1,\k,\taun} \quad \forall \uPW \in \PWtaun.
\end{equation}
Collecting~\eqref{bound R_1}, \eqref{bound R_2}, \eqref{bound R_3}, \eqref{bound R_4}, and~\eqref{bound R_5} \eqref{Reb bound five terms}, we get
\begin{equation} \label{bound I}
\begin{split}
I = \Re \left[ \bn(\deltan,\deltan) \right]
& \lesssim \{ ( \k\h+\gamma_h+1) \Vert u - \uPW \Vert_{1,\k,\taun} + (\k\h+\gamma_h) \Vert u - \uI \Vert_{1,\k,\taun} \\
& \quad \quad + \h^{\frac{1}{2}} \Vert \g- \Pie \g \Vert_{0,\partial \Omega} + \h \vert u - \uPW \vert_{2, \taun} \} \Vert \deltan \Vert_{1,\k,\taun}.
\end{split}
\end{equation}
\textit{Step~3: Estimate of the term~$II$ in~\eqref{applying Garding}:}
By using simple algebra and the definitions of $\deltan$ and the norm $\lVert \cdot \rVert_{1,\k,\taun}$, we obtain
\begin{equation} \label{bound II}
\begin{split}
II & = 2\k^2 \Vert \deltan \Vert^2_{0,\Omega}  
= 2\k^2 \Vert \un - \uI \Vert_{0,\Omega} \Vert \deltan \Vert_{0,\Omega}
\le  2\k^2 \left\{ \Vert u - \un \Vert_{0,\Omega} + \Vert u - \uI \Vert_{0,\Omega}  \right\} \Vert \deltan \Vert_{0,\Omega} \\
& \le 2 \left\{ \k \Vert u - \un \Vert_{0,\Omega} + \Vert u - \uI \Vert_{1,\k,\taun} \right\} \Vert \deltan \Vert_{1,\k,\taun}.
\end{split}
\end{equation}
We plug~\eqref{bound I} and~\eqref{bound II} into~\eqref{applying Garding} and divide by $\Vert \deltan \Vert_{1,\k,\taun}$, deducing
\begin{equation} \label{abstract analysis before duality}
\begin{split}
\alpha_h \Vert \un - \uI \Vert_{1,\k,\taun} 	
& \lesssim (\k\h+\gamma_h+1) \Vert u - \uPW \Vert_{1,\k,\taun} + (\k\h+\gamma_h + 1) \Vert u - \uI \Vert_{1,\k,\taun} \\
& \quad + h^{\frac{1}{2}} \Vert \g- \Pie\g \Vert_{0,\partial \Omega} + \h \vert u - \uPW \vert_{2, \taun} + \k \Vert u - \un\Vert_{0,\Omega}.
\end{split}
\end{equation}
\textit{Step~4: Estimate of~$\lVert u-\un \rVert_{0,\Omega}$:}
We consider the auxiliary dual problem: find $\psi$ such that
\begin{equation} \label{dual problem}
\left\{
\begin{alignedat}{2}
-\Delta \psi - \k^2\psi &= u - \un  &&\quad \text{in } \Omega\\
\nabla \psi \cdot \n_\Omega - \im\k \psi&=0  &&\quad \text{on } \partial \Omega.
\end{alignedat}
\right.
\end{equation}
The convexity of $\Omega$ and \cite[Proposition 8.1.4]{melenk_phdthesis} imply that the solution $\psi$ to the weak formulation of~\eqref{dual problem} belongs to $H^2(\Omega)$ and that the stability bounds 
\begin{equation} \label{Melenk a priori}
\Vert \psi \Vert _{1,\k,\taun} \le \dOmega \Vert u - \un \Vert_{0,\Omega},\quad \quad \vert \psi \vert_{2,\Omega} \lesssim (1+\dOmega\k) \Vert u - \un \Vert_{0,\Omega}
\end{equation}
are valid, with $\dOmega$ being a positive universal constant depending only on  $\Omega$.

In addition, for all $\E \in \taun$, there exists $\psiE \in \PWE$ such that, see \cite[Propositions 3.12 and 3.13]{GHP_PWDGFEM_hversion},
\begin{equation} \label{best approx psi-psiPW}
\lVert \psi-\psiE \rVert_{0,\E} \lesssim \hE^2 (\vert \psi \vert_{2,\E} + \k^2 \Vert \psi \Vert_{0,\E} ), \quad \lvert \psi-\psiE \rvert_{1,\E} \lesssim \hE (\k \hE+1) (\vert \psi \vert_{2,\E} + \k^2 \Vert \psi \Vert_{0,\E} ),
\end{equation}
where the hidden constants depend only on the shape of the element $\E$ and on $\p$.

Hence, combining~\eqref{best approx psi-psiPW} with~\eqref{Melenk a priori}, there exists~$\psiPW \in \PWtaun$ such that
\begin{equation} \label{pw best approximation}
\Vert \psi - \psiPW \Vert_{1,\k,\taun} \lesssim \h(1+\h\k)(1+\dOmega\k) \Vert u - \un \Vert_{0,\Omega} =: \varsigma(\k,\h) \Vert u - \un \Vert_{0,\Omega},
\end{equation}
where the hidden constant is independent of $\h$, $\k$, and $\psi$.

Besides, thanks to Theorem \ref{theorem best approximation result}, together with~\eqref{Melenk a priori} and~\eqref{best approx psi-psiPW},
defining the ``interpolant'' $\psiI$ of~$\psi$ as in~\eqref{definition TVEM interpolant},
\begin{equation} \label{Trefftz best approximation}
\begin{split}
\Vert \psi 	- \psiI \Vert_{1,\k,\taun} \le \cBA(\k\h) \lVert \psi - \psiPW \rVert_{1,\k,\taun} 
\lesssim \vartheta(\k,\h) \Vert u - \un \Vert_{0,\Omega},
\end{split}
\end{equation}
where $\cBA(\k\hE)$ and $\vartheta(\k,\h)$ are defined in~\eqref{def cBA} and~\eqref{definition beta}, respectively.

From the definition of the dual problem~\eqref{dual problem}, integrating by parts and using the definition of $\Nn(\cdot,\cdot)$ in~\eqref{definition Nn}, we get
\begin{equation} \label{bound L2 4 terms}
\begin{split}
&\Vert u - \un \Vert^2_{0,\Omega}	 = \sum_{\E \in \taun} \int_\E (-\Delta \psi - \k^2\psi) \overline{(u - \un)} \, \dx  \\
							& = \sum_{\E \in \taun} \left(   \int_\E \left( \nabla \psi \cdot \overline{\nabla(u - \un)} - \k^2 \psi \overline{(u-\un)}  \right) \, \dx - \int_{\partial \E} (\nabla \psi \cdot \nE) \overline{(u - \un)} \, \ds \right) \\
							& = \sum_{\E \in \taun} \aE(\psi,u-\un) - \Nn(\psi, u - \un) - \int_{\partial \Omega} (\nabla \psi \cdot \n_\Omega) \overline{(u-\un)}\, \ds  \\
							& = \sum_{\E \in \taun} \aE(\psi-\psiI,u-\un) + \sum_{\E \in \taun} \aE(\psiI,u-\un) - \Nn(\psi, u - \un) \\
							&\quad - \im\k \int_{\partial \Omega} \psi \overline{(u-\un)} \, \ds  =: S_1 + S_2 + S_3 + S_4.
\end{split}
\end{equation}
Hence, we need to bound the four terms on the right-hand side of~\eqref{bound L2 4 terms}.
We begin with $S_1$. Using the continuity of the continuous local sesquilinear forms, together with~\eqref{Trefftz best approximation}, we have
\begin{equation} \label{bound (1)}
\begin{split}
\vert S_1 \vert = \bigg| \sum_{\E \in \taun} \aE(\psi-\psiI,u-\un) \bigg| 	& \le \Vert \psi - \psiI \Vert_{1,\k,\taun} \Vert u - \un \Vert_{1,\k,\taun} \\
														& \lesssim \vartheta(\k,\h) \Vert u - \un \Vert_{0,\Omega} \Vert u - \un \Vert_{1,\k,\taun}.\\
\end{split}
\end{equation}
The nonconformity term $S_3$ can be bounded analogously as the term $R_5$ in~\eqref{bound R_5}. By taking the special choice~$w^{\taun}={0}$ and using~\eqref{Melenk a priori}, we arrive at
\begin{equation} \label{bound (3)}
|S_3| = |\Nn(\psi, u - \un)| \lesssim \h(1+\dOmega \k) \Vert u - \un \Vert_{0,\Omega} \Vert u - \un \Vert_{1,\k,\taun}.
\end{equation}
It remains to control the terms $S_2$ and $S_4$. For $S_2$, we observe that using the identity~\eqref{exact discrete method} and taking the complex conjugated of~\eqref{HH problem weak discrete formulation}, with the definitions~\eqref{definition bn} and~\eqref{discrete rhs}, give
\[
\begin{split}
S_2 & = \sum_{\E \in \taun} \aE(\psiI, u - \un) 
= \sum_{\E \in \taun} \overline{\aE(u,\psiI)} -\sum_{\E \in \taun} \aE(\psiI,\un)\\
& = \overline{\Nn(u,\psiI)} + \int_{\partial \Omega} \overline{\g} \psiI \, \ds + \im\k \int_{\partial \Omega} \overline{u} \psiI \, \ds + \sum_{\E \in \taun} \left\{ -\anE(\psiI, \un) + \anE(\psiI,\un) - \aE(\psiI, \un) \right\}  \\
& = \overline{\Nn(u,\psiI)} + \int_{\partial \Omega} \overline{\g} (\psiI - \Pib \psiI) \, \ds + \im\k \int_{\partial \Omega} \overline{(u - \Pib \un)} \psiI \, \ds \\
&\quad \quad + \sum_{\E\in \taun} \left\{ \anE(\psiI,\un) - \aE(\psiI, \un)  \right\}.
\end{split}
\]
We deduce
\begin{equation} \label{bound (2)+(4)}
\begin{split}
S_2+S_4 & = \overline {\Nn (u,\psiI)} + \int_{\partial \Omega}  \overline{\g} (\psiI - \Pib \psiI) \, \ds + \im\k \left( \int_{\partial \Omega} \psiI \overline{(u-\Pib \un)} \, \ds - \int_{\partial \Omega} \psi \overline{(u-\un)} \, \ds \right)  \\
& \quad + \sum_{\E \in \taun} \left\{ \anE(\psiI, \un) - \aE(\psiI, \un)  \right\} =: T_1 + T_2 + T_3 + T_4.
\end{split}
\end{equation}
The term $T_1$ can be bounded by using~\eqref{bound R_5}, \eqref{Melenk a priori}, and~\eqref{Trefftz best approximation}:
\begin{equation} \label{bound T_1}
\begin{split}
\vert T_1 \vert &= |\overline{\Nn (u,\psiI)}| 
\le \h \vert u - \uPW \vert_{2, \taun} \Vert \psiI \Vert_{1,\k,\taun} \\
&\le \h \vert u - \uPW \vert_{2, \taun} \left( \Vert \psi \Vert_{1,\k,\taun} + \Vert \psi-\psiI \Vert_{1,\k,\taun} \right) \\
&\lesssim \h (1 + \vartheta(\k,\h)) \vert u - \uPW \vert_{2,\taun} \Vert u - \un \Vert_{0,\Omega}
\end{split}
\end{equation}
for any $\uPW \in \PWtaun$.

For $T_2$, we observe that, with the definition of the projector $\Pie$ given in~\eqref{projector edge L2}, it follows
\[
\vert T_2 \vert =\left\vert\int_{\partial \Omega} \g \overline{(\psiI - \Pib \psiI)} \, \ds \right\vert = \left\vert\int_{\partial \Omega} (\g- \Pib \g) \overline{(\psiI - c)} \, \ds \right\vert,
\]
where $c$ is any edgewise complex constant.

By doing similar computations as in~\eqref{initial bound R_3} and~\eqref{Poincare trace trick}, and employing also~\eqref{Melenk a priori} and~\eqref{Trefftz best approximation},  we get
\begin{equation} \label{bound T_2}
\begin{split}
\vert T_2 \vert &\lesssim \h^{\frac{1}{2}} \Vert \g - \Pib \g \Vert_{0,\partial \Omega} \Vert \psiI \Vert_{1,\k,\taun} 
\le \h^{\frac{1}{2}} \Vert \g - \Pib \g \Vert_{0,\partial \Omega} \left( \Vert \psi \Vert_{1,\k,\taun} + \Vert \psi-\psiI \Vert_{1,\k,\taun} \right) \\
&\le \h^{\frac{1}{2}} (1 + \vartheta(\k,\h)) \Vert \g - \Pib \g \Vert_{0,\partial \Omega} \Vert u - \un \Vert_{0,\Omega}.
\end{split}
\end{equation}
The term $T_4$ can be bounded using the plane wave consistency property~\eqref{pw consistency}, the continuity of the sesquilinear forms $\an(\cdot,\cdot)$ and $\aE(\cdot,\cdot)$, and the approximation estimates~\eqref{pw best approximation} and~\eqref{Trefftz best approximation}:
\begin{equation} \label{bound T_4}
\begin{split}
\vert T_4 \vert
&= \left| \sum_{\E \in \taun} \left\{ \anE(\un,\psiI) - \aE(\un,\psiI)  \right\} \right| \\
&\le \sum_{\E \in \taun} \left| \anE(\un-\uPW , \psiI - \psiPW) - \aE(\un - \uPW, \psiI - \psiPW) \right|\\
& \le (\gamma_h+1) \Vert \un - \uPW \Vert_{1,\k,\taun} \Vert \psiI - \psiPW \Vert_{1,\k,\taun}  \\
& \le (\gamma_h+1) \left\{ \Vert u - \uPW \Vert_{1,\k,\taun} + \Vert u- \un \Vert_{1,\k,\taun} \right\}  \left\{ \Vert \psi - \psiI \Vert_{1,\k,\taun} + \Vert \psi- \psiPW \Vert_{1,\k,\taun} \right\} \\
& \lesssim (\gamma_h+1) \left\{ \Vert u - \uPW \Vert_{1,\k,\taun} + \Vert u- \un \Vert_{1,\k,\taun} \right\} \{ \vartheta(\k,\h)+\varsigma(\k,\h) \} \Vert u - \un \Vert_{0,\Omega}
\end{split}
\end{equation}
for all $\uPW, \psiPW \in \PWtaun$.

Finally, we bound $T_3$. We compute
\begin{equation*}
\begin{split}
\vert T_3 \vert &= \k \left\vert \int_{\partial \Omega} \psi \overline{(u-\un)} \, \ds - \int_{\partial \Omega} \psiI \overline{(u-\Pib \un)} \, \ds \right\vert \\
&= \k \left\vert \int_{\partial \Omega} (\psi - \psiI) \overline{(u-\un)} \, \ds - \int_{\partial \Omega} \psiI \overline{(\un - \Pib \un)} \, \ds \right\vert.
\end{split}
\end{equation*}
Using the definitions of $\psiI$ as in~\eqref{definition TVEM interpolant} and of $\Pib$ in~\eqref{projector boundary L2}, we obtain
\begin{equation} \label{first bound T_3}
\begin{split}
\vert T_3 \vert & = \k \left\vert \int_{\partial \Omega} (\psi-\psiI) \overline{(u-\un - \Pib(u-\un))} \, \ds - \int_{\partial \Omega} (\psiI - \Pib \psiI) \overline{(\un - \Pib \un)} \, \ds \right\vert \\
& = \k \bigg| \int_{\partial \Omega} (\psi-\psiI) \overline{(u-\un - \Pib(u-\un))} \, \ds 
- \int_{\partial \Omega} (\psiI - \Pib \psiI) \overline{(\un-u)} \, \ds  \\
& \quad \quad - \int_{\partial \Omega} (\psiI - \Pib \psiI) \overline{(u - \Pib u)}  \, \ds \bigg| \\
& =: \k \vert T_3^A - T_3^B - T_3^C \vert \le \k \left( \vert T_3^A\vert + \vert T_3^B\vert +\vert T_3^C\vert  \right).
\end{split}
\end{equation}
We bound the three terms on the right-hand side of~\eqref{first bound T_3} with tools analogous to those employed so far.
The term $T_3^A$ can be estimated using the Cauchy-Schwarz inequality, the trace inequality~\eqref{trace inequality},
the definition of $\psiI$, the Poincar\'{e}-Friedrichs inequality~\eqref{Poincare inequality}, and the identity~\eqref{bound u-Piu} with $\wE=0$:
\begin{equation} \label{bound Ttilde1}
\begin{split}
\vert T_3^A \vert 	& = \left\vert  \int_{\partial \Omega} (\psi - \psiI) \overline{(u-\un - \Pib(u-\un))} \, \ds \right\vert \le \Vert \psi - \psiI \Vert_{0,\partial \Omega} \Vert u-\un - \Pib(u-\un) \Vert_{0,\partial \Omega} \\
& \lesssim \h \lVert \psi - \psiI \rVert_{1,\k,\taun} \lVert u -\un \rVert_{1,\k,\taun}.
\end{split}
\end{equation}
For $T_3^B$, we can do analogous computations as in~\eqref{initial bound R_3} and~\eqref{Poincare trace trick}, getting
\begin{equation} \label{bound Ttilde2}
\begin{split}
\vert T_3^B \vert 	& = \left\vert  \int_{\partial \Omega} (\psiI - \Pib \psiI) \overline{(u-\un)} \, \ds \right\vert 
\lesssim \h \lVert \psiI - \psiPW \rVert_{1,\k,\taun} \lVert u - \un \rVert_{1,\k,\taun} \\
& \le \h (\lVert \psi - \psiI \rVert_{1,\k,\taun} + \lVert \psi - \psiPW \rVert_{1,\k,\taun}) \lVert u - \un \rVert_{1,\k,\taun} \quad \forall  \psiPW \in \PWtaun.
\end{split}
\end{equation}
The term $T_3^C$ is bounded by using~\eqref{bound u-Piu}:
\begin{equation} \label{bound Ttilde3}
\begin{split}
\vert T_3^C \vert 	& = \left\vert  \int_{\partial \Omega} (\psiI - \Pib \psiI) \overline{(u - \Pib u)} \, \ds  \right\vert 
\lesssim \h \lVert \psiI - \psiPW \rVert_{1,\k,\taun} \lVert u - \uPW \rVert_{1,\k,\taun} \\
& \le \h (\lVert \psi - \psiI \rVert_{1,\k,\taun} + \lVert \psi - \psiPW \rVert_{1,\k,\taun}) \lVert u - \uPW \rVert_{1,\k,\taun} \quad \forall \uPW,\, \psiPW \in \PWtaun.
\end{split}
\end{equation}
Plugging~\eqref{bound Ttilde1}, \eqref{bound Ttilde2}, and~\eqref{bound Ttilde3} in~\eqref{first bound T_3}, and using the approximation properties~\eqref{pw best approximation} and~\eqref{Trefftz best approximation}, yield
\begin{equation} \label{bound T_3}
\begin{split}
\vert T_3 \vert &\lesssim  \k\h (\lVert \psi - \psiPW \rVert_{1,\k,\taun} + \lVert \psi - \psiI \rVert_{1,\k,\taun}) \left( \lVert u - \uPW \rVert_{1,\k,\taun} + \lVert u - \un \rVert_{1,\k,\taun}\right)  \\
&\lesssim \k\h \left(  \lVert u - \uPW \rVert_{1,\k,\taun} + \lVert u - \un \rVert_{1,\k,\taun} \right) (\varsigma(\k,\h)+\vartheta(\k,\h)) \Vert u - \un\Vert_{0,\Omega}.
\end{split}
\end{equation}
Collecting and inserting~\eqref{bound T_1}, \eqref{bound T_2}, \eqref{bound T_4}, and~\eqref{bound T_3} into~\eqref{bound (2)+(4)}, we obtain the following bound:
\begin{equation} \label{final bound (2)+(4)}
\begin{split}
|S_2+S_4| 
&\lesssim \big\{ (1 + \vartheta(\k,\h)) (\h \vert u - \uPW \vert_{2,\taun} + \h^{\frac{1}{2}} \Vert \g - \Pie \g \Vert_{0,\partial \Omega})  \\
&\quad +(\gamma_h+1+\k\h) (\varsigma(\k,\h)+\vartheta(\k,\h)) \left[ \Vert u - \uPW \Vert_{1,\k,\taun} + \Vert u- \un \Vert_{1,\k,\taun} \right] \big\} \Vert u - \un \Vert_{0,\Omega}
\end{split}
\end{equation}
for all $\uPW \in \PWtaun$.

Plugging next~\eqref{bound (1)}, \eqref{bound (3)}, and~\eqref{final bound (2)+(4)} into~\eqref{bound L2 4 terms}, and dividing by $\Vert u - \un \Vert_{0,\Omega}$, we arrive at
\begin{equation} \label{final bound L2 4 terms}
\begin{split}
\Vert u - \un \Vert_{0,\Omega}
&\lesssim (1 + \vartheta(\k,\h)) (\h \vert u - \uPW \vert_{2,\taun} + \h^{\frac{1}{2}} \Vert \g - \Pie \g \Vert_{0,\partial \Omega})  \\
& \quad +(\gamma_h+1+\k\h) (\varsigma(\k,\h)+\vartheta(\k,\h)) \Vert u - \uPW \Vert_{1,\k,\taun} \\
& \quad +\left\{(\gamma_h+1+\k\h) (\varsigma(\k,\h)+ \vartheta(\k,\h)) + \h(1+\dOmega\k) \right\} \Vert u- \un \Vert_{1,\k,\taun}
\end{split}
\end{equation}
for all $\uPW \in \PWtaun$.\\
\textit{Step~5: Conclusion}:
We plug~\eqref{final bound L2 4 terms} in \eqref{abstract analysis before duality} and~\eqref{abstract analysis before duality} in~\eqref{triangular inequality in abstract analysis}, obtaining
\begin{equation} \label{final bound u-un}
\begin{split}
\Vert u - \un \Vert_{1,\k,\taun} 	
& \lesssim \frac{(\k\h+\gamma_h+1)(1 +  \k  (\varsigma(\k,\h)+\vartheta(\k,\h)))}{\alpha_h} \Vert u - \uPW \Vert_{1,\k,\taun} \\
&+ \left(\frac{(\k\h+\gamma_h+1)}{\alpha_h}+1 \right) \Vert u - \uI \Vert_{1,\k,\taun}  
+ \frac{(\k (1 + \vartheta(\k,\h)) + 1) \h}{\alpha_h} \vert u - \uPW \vert_{2,\taun} \\
& + \frac{(\k (1 + \vartheta(\k,\h))+1) \h^{\frac{1}{2}}}{\alpha_h} \Vert \g - \Pie \g \Vert_{0,\partial \Omega}\\
& + \frac{\k\left\{(\gamma_h+1+\k\h) (\varsigma(\k,\h) + \vartheta(\k,\h)) + \h(1+\dOmega \k) \right\}}{\alpha_h} \Vert u- \un \Vert_{1,\k,\taun}
\end{split}
\end{equation}
for all plane waves $\uPW \in \PWtaun$, where $\varsigma(\k,\h)$ and $\vartheta(\k,\h)$ are given in~\eqref{definition beta}.

Assuming that $\k^2 \h$ is sufficiently small, for instance, having set $\ctilde$ the hidden constant in~\eqref{final bound u-un},
\begin{equation} \label{h small condition}
\ctilde \, \frac{\k\left\{(\gamma_\h+1+\k\h) (\varsigma(\k,\h) + \vartheta(\k,\h)) + \h(1+\dOmega\k) \right\}}{\alpha_h} \le \frac{1}{\nu},
\end{equation}
for some $\nu>1$, we can bring the last term on the right-hand side of~\eqref{final bound u-un} to the left-hand side and obtain, further using~\eqref{best approximation u-uI}, the desired bound~\eqref{abstract error estimate}. 
\end{proof}

\subsection{\textit{A priori} error bounds} \label{subsection a priori error bound}
From Theorem \ref{theorem abstract theory}, we deduce \textit{a priori} error bounds in terms of $\h$. 
The best approximation terms with respect to plane waves on the right-hand side of~\eqref{abstract error estimate}, namely $\Vert u - \uPW \Vert_{1,\k,\taun}$ and $\vert u - \uPW \vert_{2,\taun}$,
can be bounded using Theorem \ref{theorem local h-best approximation plane waves}.
A bound for the third term, namely~$\Vert \g - \Pib \g \Vert_{0,\partial \Omega}$, is given in the following proposition.

\begin{prop} \label{prop proj error}
Let $\taun$ satisfy the assumptions (\textbf{G1})-(\textbf{G3}) and let $\{ \dl \}_{\ell=1,\dots,p}$, $\p = 2q+1$, $q \in \N_{\ge 2}$, be a given set of plane wave directions fulfilling the assumption (\textbf{D1}). 
Assuming that $\h$ is sufficiently small, see~\eqref{h small condition again} below, and given $\g$ defined on $\partial \Omega$ with $\g_\e:=\g_{|_\e} \in H^{s-\frac{1}{2}}(e)$ for all $\e \in \Enb$ and for some $s \in \R_{\ge 1}$, we have
\begin{equation*}
\Vert \g - \Pib \g \Vert_{0,\partial \Omega} 
\lesssim e^{\left( \frac{7}{4}-\frac{3}{4}\rho_{\max} \right)\sigma(\k\h)} \left( 1+[\sigma (k\h)]^{q+9} \right) \h^{\zeta+\frac{1}{2}} \sum_{e \in \Enb} \lVert G \rVert_{\zeta+1,\k,\De},
\end{equation*}
where $\zeta:=\min(q,s)$, $\Pib$ is defined in~\eqref{projector boundary L2}, the constant $\sigma>1$ with $\sigma \approx 1$, and $G$ and $\rho_{\max}$ are set in~\eqref{auxiliary problem} and~\eqref{rhomax} below, respectively. 
\end{prop}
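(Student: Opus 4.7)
The plan is to exploit the edgewise structure of the projector $\Pib$ to reduce the estimate to a sum of single-edge bounds, and then, on each edge, to lift the Dirichlet datum $\g_\e$ to a genuine Helmholtz solution on a small shape-regular neighborhood of the edge so that Theorem~\ref{theorem local h-best approximation plane waves} becomes applicable.

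First I would use the fact that $\Pib$ defined in~\eqref{projector boundary L2} acts edgewise as the $L^2(\e)$-orthogonal projection onto $\PWc(\e)$ to write
\[
\Vert \g - \Pib \g \Vert^2_{0,\partial \Omega} = \sum_{\e \in \Enb} \Vert \g_\e - \Pie \g_\e \Vert^2_{0,\e} \le \sum_{\e \in \Enb} \Vert \g_\e - \we \Vert^2_{0,\e}
\]
for any choice of $\we \in \PWc(\e)$, so that it suffices to exhibit one good plane wave per boundary edge. Next I would associate to each $\e \in \Enb$ a star-shaped auxiliary domain $D_\e$ containing $\e$ on its boundary, with diameter comparable to $\he$ and star-shape constant bounded below by a parameter $\rho_{\max}$; this is where the auxiliary problem of the statement will live. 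I would then take $G$ as the solution of a local Helmholtz problem on $D_\e$ with Dirichlet (or impedance) data compatible with $\g_\e$ on $\e$ and smooth data on $\partial D_\e \setminus \e$, so that $G \in H^{\zeta+1}(D_\e)$ and, in particular, $G$ belongs to the kernel of the Helmholtz operator on $D_\e$.

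With such a $G$ in hand, Theorem~\ref{theorem local h-best approximation plane waves} applied on $D_\e$ with $L=\zeta$ provides a plane wave $\we$ satisfying, for $j\in\{0,1\}$,
\[
\Vert G - \we \Vert_{j,\k,D_\e} \lesssim \cPW(\sigma \k h)\, \he^{\zeta+1-j}\, \Vert G \Vert_{\zeta+1,\k,D_\e},
\]
with $\sigma>1$ accounting for the slight enlargement from $\he$ to $\mathrm{diam}(D_\e)$. Combining this with the trace inequality~\eqref{trace inequality} applied to $G-\we$ on $D_\e$ and restricted to $\e \subset \partial D_\e$, namely
\[
\Vert G - \we \Vert^2_{0,\e} \le \CT \bigl( \he^{-1}\Vert G-\we\Vert^2_{0,D_\e} + \he \vert G-\we\vert^2_{1,D_\e} \bigr),
\]
both contributions on the right-hand side yield the same $\he^{2\zeta+1}$ scaling, whence
\[
\Vert \g_\e - \we \Vert_{0,\e} \lesssim \cPW(\sigma \k h)\, \he^{\zeta+\frac{1}{2}}\, \Vert G \Vert_{\zeta+1,\k,D_\e}.
\]
Squaring, summing over $\e \in \Enb$, using $\he \le h$, the elementary bound $\bigl(\sum a_i^2\bigr)^{1/2}\le \sum a_i$, and the explicit form of $\cPW$ from~\eqref{cPW definition} with $\rho$ replaced by $\rho_{\max}$ would then yield the claimed estimate.

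The main obstacle I expect is the construction of the lifting $G$: $D_\e$ must be shape-regular uniformly in $\e$ so that the constants in Theorem~\ref{theorem local h-best approximation plane waves} and in the trace inequality remain controlled, the smallness assumption on $\h$ must be enough to exclude $\k^2$ from the Dirichlet--Laplace spectrum of every $D_\e$, guaranteeing well-posedness of the auxiliary problem, and the $H^{\zeta+1}(D_\e)$ stability of $G$ in terms of the $H^{\zeta-\frac{1}{2}}(\e)$ norm of $\g_\e$ must be tracked with constants uniform in $\e$. Once this lifting is in place, the remaining steps --- edgewise decomposition, best approximation property of $\Pie$, trace inequality, and plane wave approximation --- combine in a routine way.
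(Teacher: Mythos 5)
Your proposal follows essentially the same route as the paper: an edgewise reduction via the best-approximation property of $\Pie$, a lifting of $\g_\e$ to a Helmholtz solution $G$ on a uniformly shape-regular auxiliary domain $\De$ (the paper constructs $\De$ with $C^\infty$ boundary by convolution smoothing, extends $\g_\e$ to $\partial\De$, and excludes $\k^2$ from the Dirichlet spectrum exactly as you anticipate), followed by Theorem~\ref{theorem local h-best approximation plane waves} and the trace inequality. The only cosmetic difference is that the paper subtracts the mean value $c_e$ and invokes the Poincar\'e--Friedrichs inequality so that only the $H^1$-seminorm approximation estimate is needed, whereas you use both the $j=0$ and $j=1$ estimates directly; note also that the regularity $G\in H^s(\De)$ obtained from $\g_\e\in H^{s-\frac12}(\e)$ actually yields $\zeta=\min(q,s-1)$ in the paper's proof, one order less than you (and the proposition's statement) assert.
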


\begin{proof}
Associated with every boundary edge $e \in \Enb$, we consider a domain $D_e$ with $C^\infty$-boundary and diameter $\h_{\De}=\sigma\h$, where~$\sigma>1$ is such that~$\hDe \approx \h$, and~$\De$ satisfies
\begin{itemize}
\item $e \in \partial \De$;
\item there exist $\rho_{\De} \in (0,\frac{1}{2}]$ and $0<\rho_{0,\De} \le \rho_{\De}$, such that the ball $B_{\rho_{\De} \hDe}$ is contained in $\De$, and $\De$ is star-shaped with respect to $B_{\rho_{0,\De} \hDe}$;
\item it holds that
\begin{equation} \label{h small condition again}
\text{$\k^2$ is not a Dirichlet-Laplace eigenvalue in $\De$},
\end{equation}
which means that $\k^2$ fulfils the counterpart of the condition~\eqref{condition for Dirichlet-Laplace} on $\De$.
\end{itemize}
A graphical example of $\De$ with smooth boundary is provided in Figure \ref{fig:domainde}.
The construction of such domains is based on convolution techniques, as done in~\cite{smoothing_polygons}.
\begin{figure}[h]
\begin{center}
\begin{tikzpicture}
\draw[-,orange,line width=2pt] ({3*cos(-36)},{3*sin(-36)}) -- ({3*cos(36)},{3*sin(36)});
\draw[-,line width=2pt] ({3*cos(36)},{3*sin(36)}) -- ({3*cos(108)},{3*sin(108)});
\draw[-,line width=2pt] ({3*cos(108)},{3*sin(108)}) -- ({3*cos(180)},{3*sin(180)});
\draw[-,line width=2pt] ({3*cos(180)},{3*sin(180)}) -- ({3*cos(-108)},{3*sin(-108)});
\draw[-,line width=2pt] ({3*cos(-108)},{3*sin(-108)}) -- ({3*cos(-36)},{3*sin(-36)});
\draw[dashed,blue,line width=2pt] ({3*cos(-36)-1},{3*sin(-36)}) -- ({3*cos(36)-1},{3*sin(36)});
\draw [dashed,blue,line width=2pt,domain=0:180] plot ({3*cos(-36)-0.5+0.5*cos(\x)}, {3*sin(36)+0.5*sin(\x)}); 
\draw [dashed,blue,line width=2pt,domain=180:360] plot ({3*cos(-36)-0.5+0.5*cos(\x)}, {3*sin(-36)+0.5*sin(\x)}); 
\coordinate[label=left:\Large ${K_e}$] (Ke) at (0,0);
\coordinate[label=left:\Large $\textcolor{blue}{\De}$] (De) at (1.2,1);
\coordinate[label=right:\Large $\textcolor{orange}{e}$] (e) at ({3*cos(36)+0.15},0);
\end{tikzpicture}
\end{center}
\caption{A possible construction for the domain $\De$ with smooth boundary, given a boundary edge $e \in \Enb \cap K_e$, for some polygon $K_e$ belonging to a mesh $\taun$.}
\label{fig:domainde}
\end{figure}
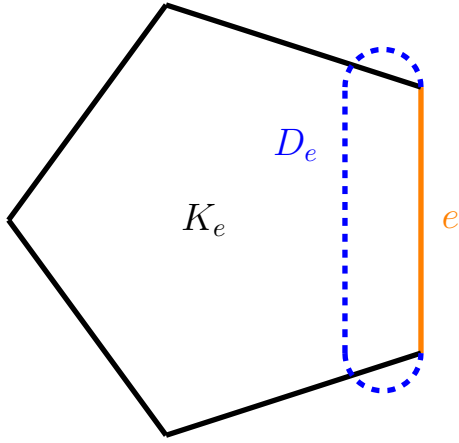

Note that the requirement on $\sigma$ guarantees a uniformly bounded overlapping of the collection of extended domains $\De$ associated with all the boundary edges $\e\in \Enb$.
More precisely,  there exists $N \in \N$ such that, for all $\x \in \R^2$, $\x$ belongs to the intersection of at most $N$ domains $\De$, $e \in \Enb$.
Owing to the smoothness of $\partial \De$, $e \in \Enb$, it is possible to extend $g_e$ to an $H^{s-\frac{1}{2}}(\partial \De)$ function, following e.g. \cite[Sect. 5.4]{evansPDE}, which we denote by~$\widetilde{g_e}$.
Note that~$\widetilde{\g_\e}_{|_\e}=\g_\e$.

Next, we consider the Helmholtz problem
\begin{equation} \label{auxiliary problem}
\left\{
\begin{alignedat}{2}
-\Delta G - \k^2 G &= 0  &&\quad \text{in } \De\\
G&=\widetilde{g_e}  &&\quad \text{on } \partial \De.
\end{alignedat}
\right.
\end{equation}
Well-posedness follows from the fact that $\k^2$ is not a Dirichlet-Laplace eigenvalue in $\De$, see~\eqref{h small condition again}.
Denoting by $\gamma^{-1}$ the continuous right-inverse trace operator~\cite[Theorem 3.37]{mclean2000strongly} and introducing $G_0:=G-\gamma^{-1} \widetilde{g_e}$, we can rewrite~\eqref{auxiliary problem} as a Helmholtz problem with zero Dirichlet boundary conditions:
\begin{equation*}
\left\{
\begin{alignedat}{2}
-\Delta G_0 - \k^2 G_0 &= f_0  &&\quad \text{in } \De\\
G_0&=0  &&\quad \text{on } \partial \De,
\end{alignedat}
\right.
\end{equation*}
with right-hand side $f_0:=(\Delta+\k^2) (\gamma^{-1} \widetilde{g_e}) \in H^{s-2}(\De)$.

Standard regularity theory \cite[Sect. 6.3]{evansPDE} implies $G_0 \in H^{s}(\De)$ and therefore $G \in H^{s}(\De)$. Then, by using the definition of the projector $\Pib$ in~\eqref{projector boundary L2} on every edge $e \in \Enb$, we obtain
\begin{equation*}
\Vert \g - \Pib \g \Vert_{0,\e} \le \Vert g_e - \wDe - c_e \Vert_{0,\e} \quad \forall \wDe \in \PWDe,\, \forall c_e \in \mathbb C.
\end{equation*}
By applying the trace inequality~\eqref{trace inequality}, selecting $c_e=\frac{1}{\vert \De \vert} \int_{\De} (G-w^{\De}) \, \dx$, and using the Poincar\'e inequality~\eqref{Poincare Friedrich full}, we get
\[
\Vert \g - \Pib \g \Vert_{0,\e}
\le \CT^{\frac{1}{2}} (\CP^2+1)^{\frac{1}{2}} \h^{\frac{1}{2}} \vert G- \wDe \vert_{1,\De} \quad \forall \wDe \in \PWDe.
\]
For $s=1$, this can be bounded by simply taking $\wDe=0$. Provided that $s \in \mathbb R_{>1}$, we can use Theorem \ref{theorem local h-best approximation plane waves} to get
\begin{equation*} 
\Vert \g - \Pib \g \Vert_{0,\e} 
\lesssim e^{\left( \frac{7}{4}-\frac{3}{4}\rho_{\De} \right)\k\hDe} \left( 1+(k\hDe)^{q+9} \right) \hDe^{\zeta+\frac{1}{2}} \lVert G \rVert_{\zeta+1,\k,\De},
\end{equation*}
where $\zeta:=\min(q,s-1)$, and the hidden constant is independent of $\k$, $\hDe$, and $G$. Defining
\begin{equation} \label{rhomax}
\rho_{\max}:=\max_{\De} \rho_{\De},
\end{equation} 
and summing over all edges $e \in \Enb$ give the desired result.
\end{proof}

The following theorem states the \textit{a priori} error estimate associated with the method~\eqref{HH problem weak discrete formulation}.

\begin{thm}
Let $u \in H^{s+1}(\Omega)$, $s \in \R_{\ge 1}$, be the exact solution to~\eqref{HH problem weak formulation}. Under the same assumptions as in Theorem \ref{theorem abstract theory} and Proposition \ref{prop proj error}, the following \textit{a priori} error bound is valid: 
\begin{equation*} 
\begin{split}
\Vert u - \un \Vert_{1,\k,\taun} 
&\lesssim \cPW (\k\h) \h^{\zeta_{1,2}} \left(\aleph_1(\k,\h) + \aleph_2(\k,\h)\right) \lVert u \rVert_{\zeta_{1,2}+1,\k,\taun} \\
& \quad + \h^{\zeta_3+1} \, \aleph_2(\k,\h) e^{\left( \frac{7}{4}-\frac{3}{4}\rho_{\max} \right)\sigma(\k\h)} \left( 1+[\sigma (k\h)]^{q+9} \right) \sum_{e \in \Enb} \lVert G \rVert_{\zeta_3+1,\k,\De},
\end{split}
\end{equation*}
where $\zeta_{1,2}:=\min(q,s)$, $\zeta_3:=\min(q,s-1)$, the constants $\cPW(\k\h)$, $\aleph_1(\k,\h)$, and $\aleph_2(\k,\h)$ are defined in~\eqref{cPW definition} and~\eqref{alephs}, respectively, and where $\rho_{\max}$, $\sigma$, $G$, and $\De$ are constructed in Proposition \ref{prop proj error}.
\end{thm}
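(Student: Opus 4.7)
The plan is to combine three ingredients that are already in place: the abstract error estimate of Theorem~\ref{theorem abstract theory}, the plane wave $\h$-version best approximation of Theorem~\ref{theorem local h-best approximation plane waves}, and the boundary projection bound of Proposition~\ref{prop proj error}. As the starting point, I would invoke Theorem~\ref{theorem abstract theory} directly, which produces
\begin{equation*}
\Vert u - \un \Vert_{1,\k,\taun} \lesssim \aleph_1(\k,\h) \Vert u - \uPW \Vert_{1,\k,\taun} + \h\,\aleph_2(\k,\h) \vert u - \uPW \vert_{2,\taun} + \h^{\frac{1}{2}}\,\aleph_2(\k,\h) \Vert \g - \Pib \g \Vert_{0,\partial \Omega}
\end{equation*}
for every $\uPW \in \PWtaun$. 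It then remains to bound each of the three terms on the right-hand side by a concrete, explicit rate in $\h$.

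For the first two terms, I would exploit that $u$ is a Helmholtz solution in each $\E \in \taun$ and belongs to $H^{s+1}(\E)$. Applying Theorem~\ref{theorem local h-best approximation plane waves} element by element with $L = \zeta_{1,2} = \min(q,s)$ produces a single plane wave $\wE \in \PWE$ such that
\begin{equation*}
\lVert u - \wE \rVert_{j,\k,\E} \le \cPW(\k\hE)\, \hE^{\zeta_{1,2}+1-j}\, \lVert u \rVert_{\zeta_{1,2}+1,\k,\E}
\end{equation*}
for all $0 \le j \le \zeta_{1,2}$ simultaneously. Choosing $j=1$ and summing over $\E \in \taun$ controls $\Vert u - \uPW \Vert_{1,\k,\taun}$ by $\cPW(\k\h)\,\h^{\zeta_{1,2}}\,\lVert u \rVert_{\zeta_{1,2}+1,\k,\taun}$, while choosing $j=2$ (and using $\lvert v \rvert_{2,\E} \le \lVert v \rVert_{2,\k,\E}$) bounds $\vert u - \uPW \vert_{2,\taun}$ by $\cPW(\k\h)\,\h^{\zeta_{1,2}-1}\,\lVert u \rVert_{\zeta_{1,2}+1,\k,\taun}$. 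The extra $\h$ factor multiplying the second term restores the uniform $\h^{\zeta_{1,2}}$ scaling, so the two contributions combine into the first summand of the claimed bound, with the prefactor $\aleph_1(\k,\h) + \aleph_2(\k,\h)$.

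For the remaining term I would invoke Proposition~\ref{prop proj error}. Because $u \in H^{s+1}(\Omega)$, the trace theorem applied to $\g = \nabla u \cdot \mathbf n_\Omega + \im\k\,u$ yields $\g_e \in H^{s-\frac12}(\e)$ on every boundary edge, so the proposition applies with the same $s$ and produces the factor $\h^{\zeta_3+\frac12}$ with $\zeta_3 = \min(q,s-1)$; multiplying by the $\h^{1/2}\,\aleph_2(\k,\h)$ prefactor of the abstract bound gives precisely the second summand of the statement. There is no genuinely hard analytical step here: the substantive work sits in Theorem~\ref{theorem abstract theory} (duality and the discrete G\r arding inequality), in Theorem~\ref{theorem local h-best approximation plane waves}, and in Proposition~\ref{prop proj error}. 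The only real point requiring attention is a mild bookkeeping issue, namely using the \emph{same} approximant $\wE$ in the $j=1$ and $j=2$ estimates (which Theorem~\ref{theorem local h-best approximation plane waves} grants) and absorbing the local factors $\cPW(\k\hE)$ into the global $\cPW(\k\h)$ by monotonicity under assumption~(\textbf{G1}).
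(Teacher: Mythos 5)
Your proposal is correct and follows exactly the route of the paper's own (very terse) proof: invoke the abstract estimate of Theorem~\ref{theorem abstract theory}, bound the two plane wave best approximation terms elementwise via Theorem~\ref{theorem local h-best approximation plane waves} with $L=\zeta_{1,2}$ (using $j=1$ and $j=2$ with the same approximant), and control the boundary term via Proposition~\ref{prop proj error}. The bookkeeping you supply (the extra factor $\h$ restoring the $\h^{\zeta_{1,2}}$ rate for the $\vert\cdot\vert_{2,\taun}$ term, and the product $\h^{\frac12}\cdot\h^{\zeta_3+\frac12}$ giving $\h^{\zeta_3+1}$) matches the stated result.
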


\begin{proof}
The assertion follows directly by combining the abstract error estimate~\eqref{abstract error estimate} in Theorem~\ref{theorem abstract theory} with best approximation estimates.
More precisely, the first and second terms on the right-hand side of~\eqref{abstract error estimate} can be bounded by means of Theorem~\ref{theorem local h-best approximation plane waves}.
For the third term, Theorem~\ref{prop proj error} can be applied.
\end{proof}

\section{A numerical experiment} \label{section numerical results}
Having recalled that the implementation details and a wide number of numerical experiments will be presented in~\cite{TVEM_Helmholtz_num}, we test the convergence of the method~\eqref{HH problem weak discrete formulation} on a test case with domain
$\Omega=(0,1)^2$ and exact solution
\begin{equation} \label{exact solution}
u(\xbold) 
 = H_0^{(1)}(k \vert \xbold - \xbold_0 \vert), \quad \quad \xbold_0=(-0.25,0),
\end{equation}
where $ H_0^{(1)}$ denotes the $0$-th order Hankel function of the first kind, see \cite[Chapter 9]{AbramowitzStegun_handbook}.
Notice that $u$ 
 is analytic in $\Omega$.

We aim at studying the relative $L^2$ error
\begin{equation} \label{exact L2 error}
\frac{\Vert u - \un \Vert_{0,\Omega}} {\Vert u \Vert_{0,\Omega}},
\end{equation}
$\un$ being the solution to~\eqref{HH problem weak discrete
  formulation} when  approximating~$u$.

As $\un$ is not available in closed form, the quantity~\eqref{exact L2 error} is not computable.
We consider instead the following quantity:
\begin{equation} \label{computable error}
\frac{\Vert u - \Pi_\p u_h \Vert_{0,\Omega}}{\Vert u \Vert_{0,\Omega}},
\end{equation}
where we recall that~$\Pi_\p$, which is defined piecewise as the projector~$\Pip$ in~\eqref{projector aK}, is explicitly computable via the degrees of freedom.

Besides, we employ the following explicit local stabilization forms:
\begin{equation} \label{D-recipe}
\SE(\un, \vn) = \sigma \sum_{\e \in \EE} \sum_{\ell \in \Je} \aE (\Pip \varphi_{\e,\ell}, \Pip \varphi_{\e,\ell}) \, \dof_{\e,\ell} (\un) \overline{\dof_{\e,\ell} (\vn)} \quad \forall \un,\,\vn \in \VE,\; \forall \E \in \taun
\end{equation}
for some positive constant~$\sigma$.

The reason why we employ such~$\SE(\cdot, \cdot)$, rather than the more standard choice
\begin{equation} \label{dofi dofi}
\widetilde S^\E(\un, \vn) = \sum_{\e \in \EE} \sum_{\ell \in \Je} \dof_{\e,\ell} (\un) \overline{\dof_{\e,\ell} (\vn)} \quad \forall \un,\,\vn \in \VE,\; \forall \E \in \taun,
\end{equation}
introduced in~\cite{VEMvolley} for the Poisson problem, is that we aim at satisfying~\eqref{equations 34 and 35} with e.g.~$\alphah =1$.
In particular, given $\{\varphi_{\e,\ell}\}_{\e\in \EE,\, \ell\in \Je}$ the canonical basis~\eqref{local basis} on an element~$\E$, we want that
\[
\aE(\varphi_{\e,\ell}, \varphi_{\e,\ell}) \le \SE(\varphi_{\e,\ell}, \varphi_{\e,\ell})= \sigma\aE(\Pip \varphi_{\e,\ell}, \Pip \varphi_{\e,\ell}) \le \gamma_\h \Vert \varphi_{\e,\ell} \Vert_{1,\k,\E} ^2 \quad \forall \e\in \EE, \, \forall \ell\in \Je.
\]
In standard polynomial based VEM, a careful choice of the degrees of freedom guarantees that the energy of the basis functions scales like~$1$.
This is not the case in our setting, since the method hinges upon plane wave spaces.
Therefore, the standard choice~\eqref{dofi dofi} is corrected here by inserting the factor~$\aE(\Pip \varphi_{\e,\ell}, \Pip \varphi_{\e,\ell})$ as in~\eqref{D-recipe}, which mimics~$\aE(\varphi_{\e,\ell}, \varphi_{\e,\ell})$.
This approach is inspired by the so called diagonal recipe \cite{VEM3Dbasic, fetishVEM, fetishVEM} employed in the original VEM for the Poisson problem.
In the numerical experiments below, we fix~$\sigma=1$.

We highlight that the \ncTVEM, as presented in Section \ref{section definition local space}, suffers of strong ill-conditioning.
This is essentially because the basis functions become close to be linearly dependent as the edges of the
elements shrink or as the number of plane waves grows.
In order to overcome this drawback, we employ an edgewise or\-tho\-go\-na\-li\-za\-tion-and-fil\-te\-ring procedure.
Such procedure, described in detail in~\cite{TVEM_Helmholtz_num}, leads to a reduction of the number of degrees of freedom.
Here, we limit ourselves to mention that this strategy makes the method applicable for the same range of parameters as other plane wave methods; moreover, it speeds up the convergence rate of the method in terms of the number of degrees of freedom.

We test the method on a sequence of Voronoi-Lloyd meshes, obtained using
the algorithm in \cite{paulinotestnumericipolygonalmeshes}, with
decreasing mesh size; see Figure \ref{figure Voronoi meshes}. Note that such meshes are not nested by construction.
\begin{figure}[h]
\begin{center}
\includegraphics[width=0.4\textwidth]{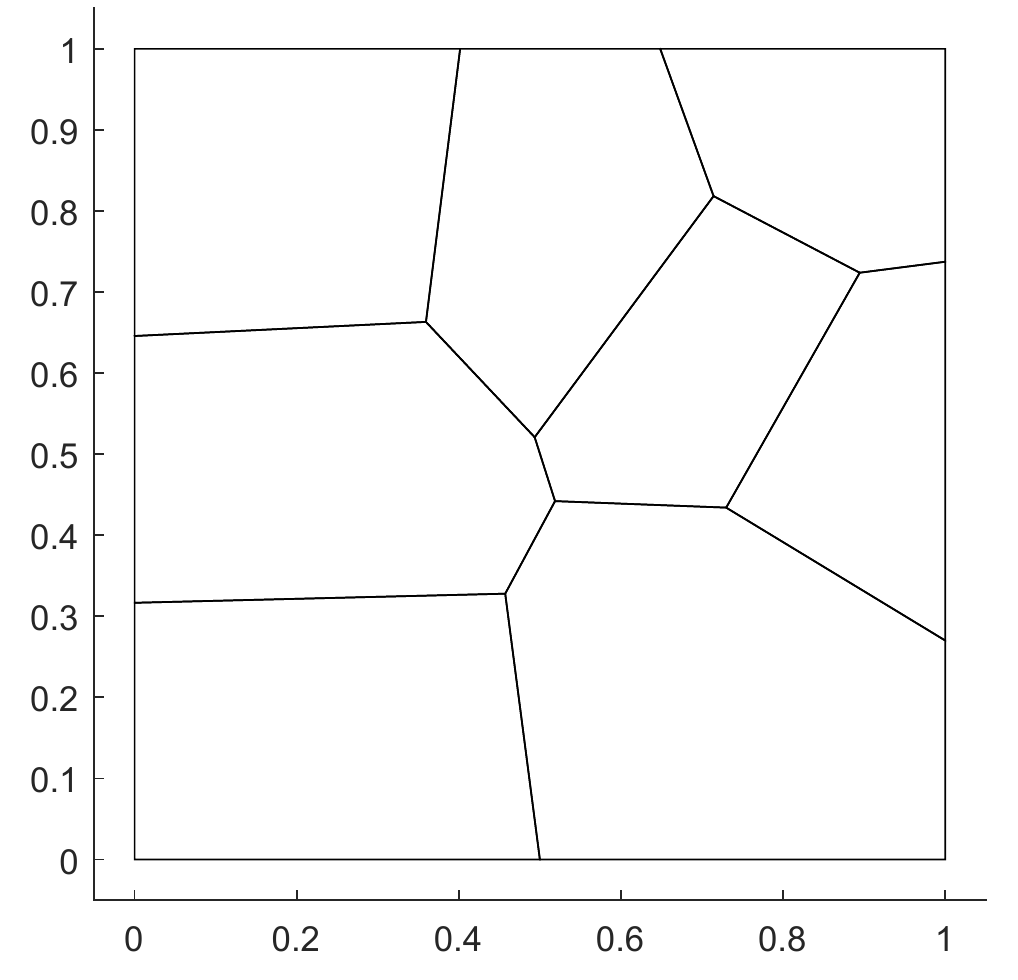}
\hspace{0.5cm}
\includegraphics[width=0.4\textwidth]{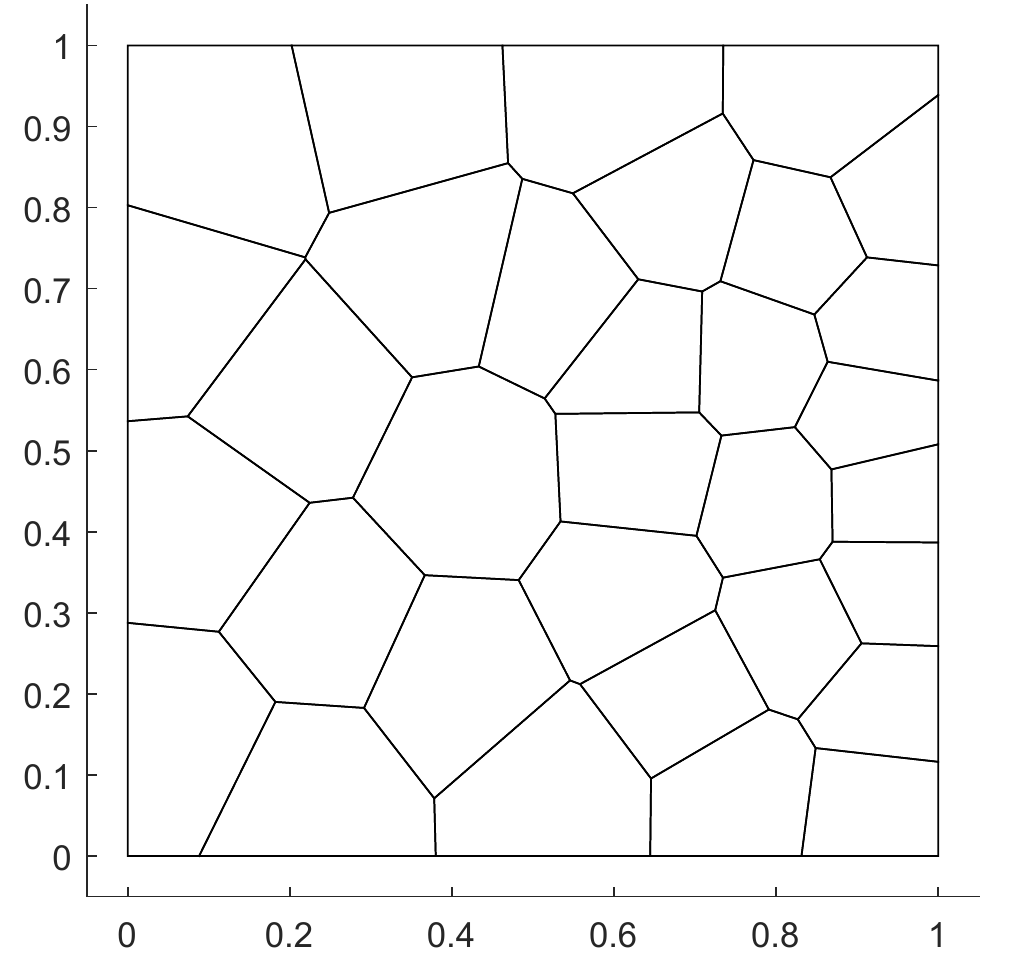}
\end{center}
\caption{Examples of Voronoi meshes with $8$ and $32$ elements.}
\label{figure Voronoi meshes} 
\end{figure}

In Figure \ref{figure numerical
  results 2}, 
we plot the computable quantity~\eqref{computable error}
obtained by employing $\p = 2q+1$ plane waves in the definition of the
local spaces $\PWE$ in~\eqref{plane wave bulk space}, with $q=4$ and 7, against the inverse of the product $\h\k$. 
We tested the method for different wave numbers~$\k$.

\begin{figure}[h]
\begin{center}
\begin{subfigure}[b]{0.48\textwidth}  
\includegraphics[width=1\textwidth]{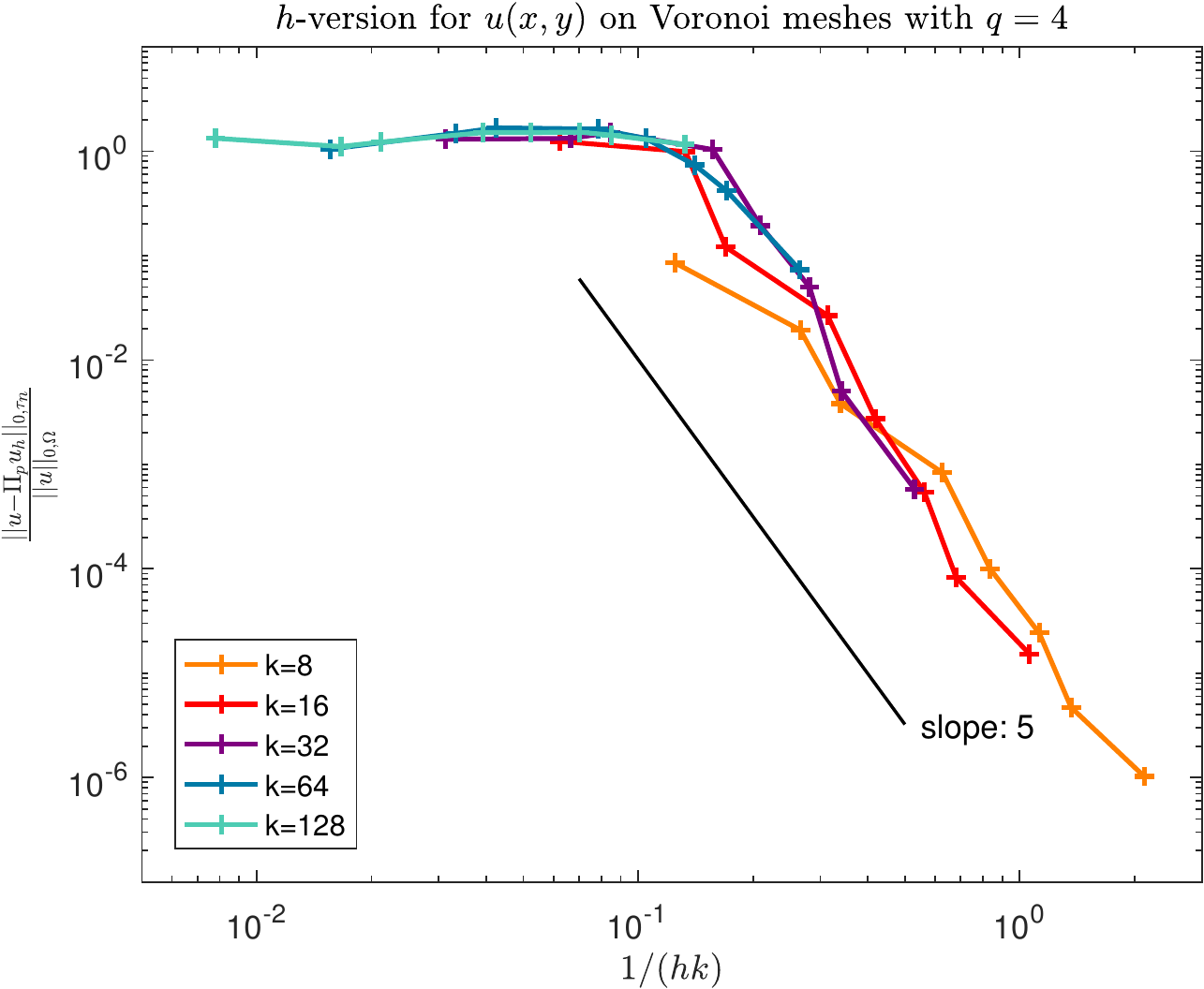}
\end{subfigure}
\hfill
\begin{subfigure}[b]{0.48\textwidth}   
\includegraphics[width=1\textwidth]{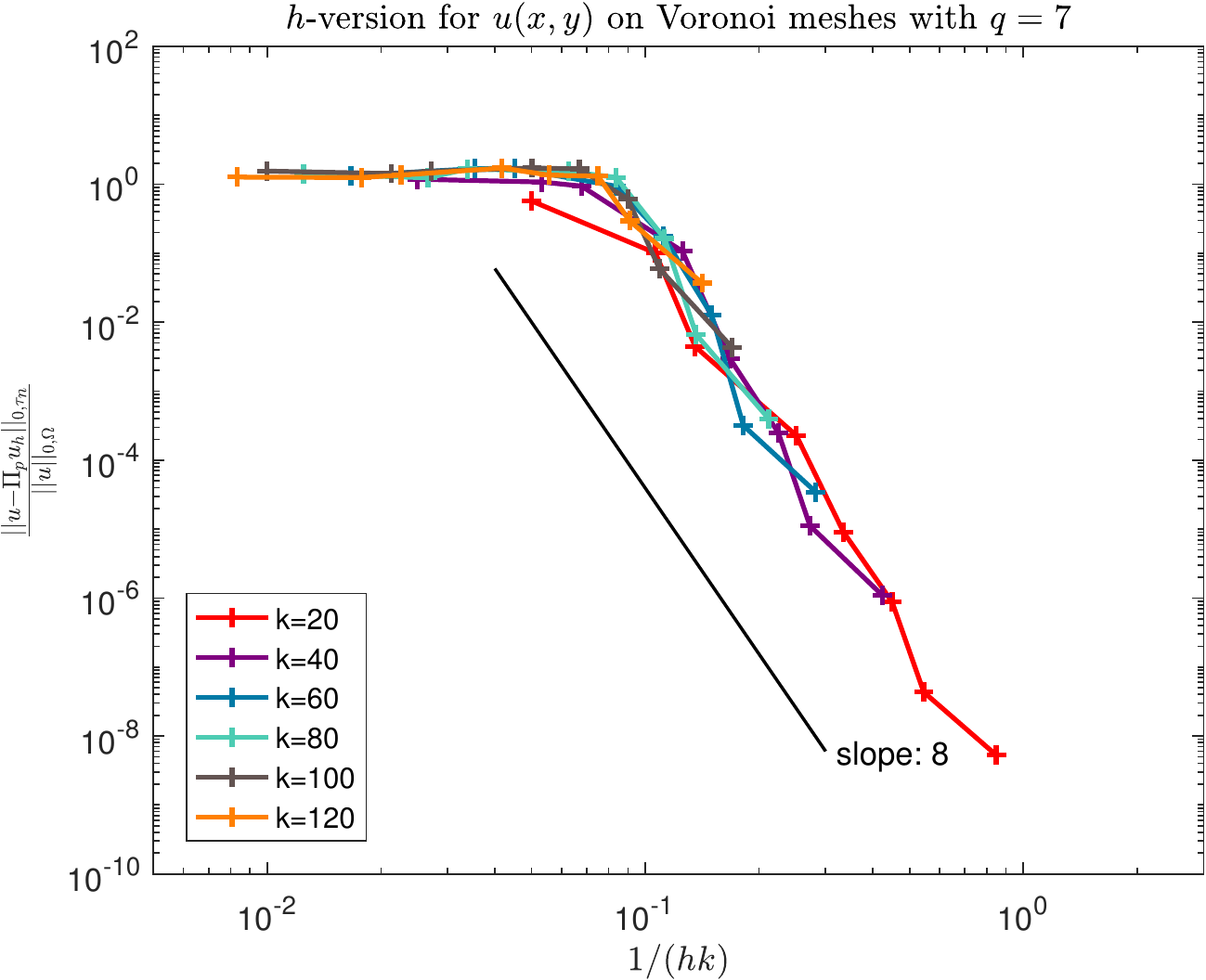}
\end{subfigure}
\end{center}
\caption{$\h$-convergence of the nonconforming Trefftz-VEM on the Voronoi-Lloyd meshes for different values of the wave number~$\k$.
The computable quantity~\eqref{computable error} is plotted against the inverse of the product $\h\k$. Local spaces with~$\p=9$ (left) and~$\p=15$ (right) plane waves are used.}
\label{figure numerical results 2}
\end{figure}

Firstly, we notice that the error curves are not precisely straight; this is due to the fact that the Voronoi
meshes (even after some Lloyd iterations) contain elements with substantially different size. We remark that similar tests on Cartesian meshes led to straight lines.
Moreover, the decreasing behavior stops once the product $\h\k$ becomes ``too small'', as compared to $\p$; this can be traced back to the ill-conditioning of the
plane wave basis (similar results are obtained employing plane wave-discontinuous Galerkin methods, see~\cite{GHP_PWDGFEM_hversion}).
The tests indicate algebraic convergence rate of order $\mathcal{O}((hk)^{q+1})$ (see also Figure \ref{figure numerical results 3}) and, as typical of plane wave based methods, a delayed onset of convergence for higher values of $k$.

\begin{figure}[h]
\begin{center}
\includegraphics[width=0.5\textwidth]{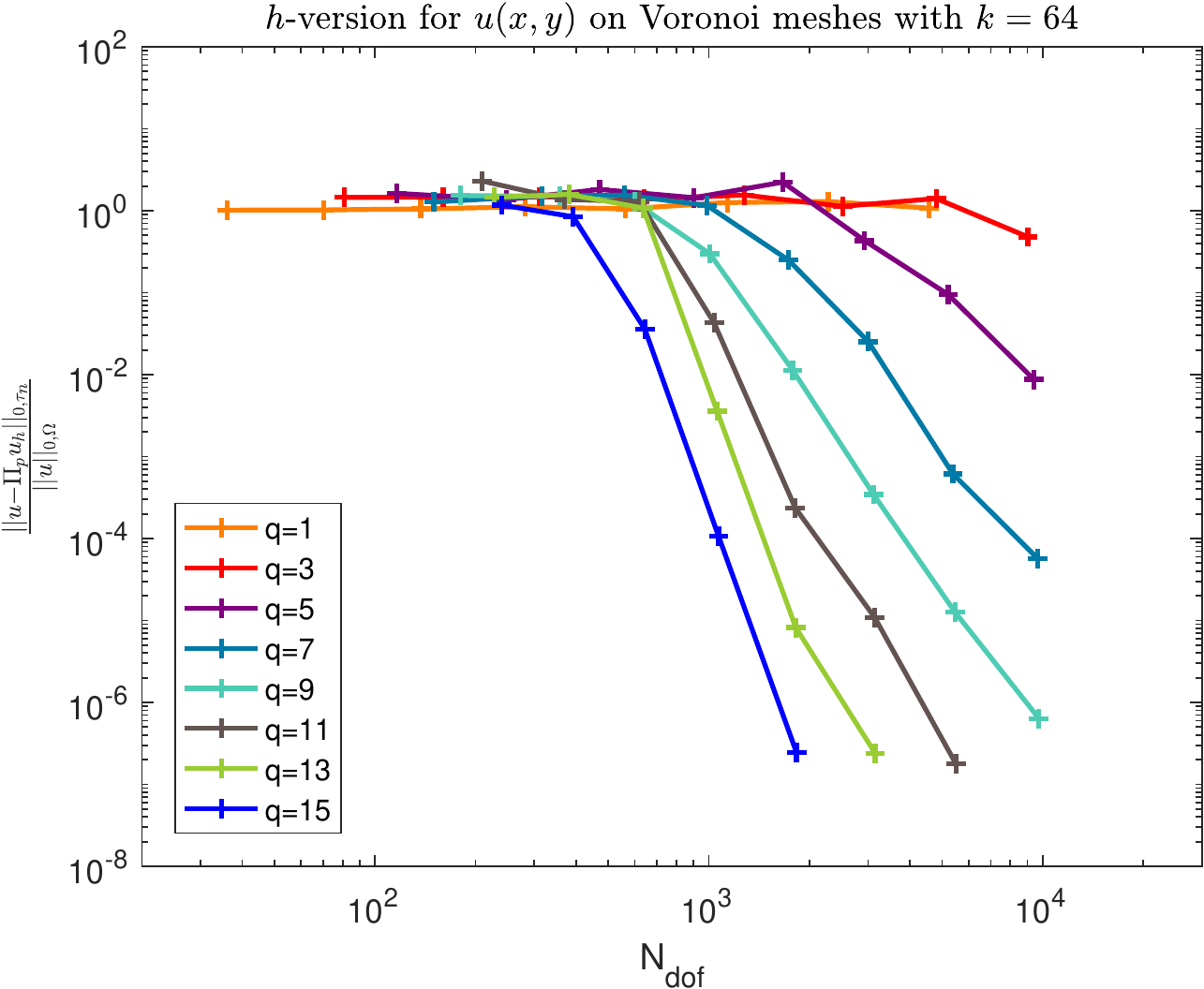}
\end{center}
\caption{$\h$-convergence of the nonconforming Trefftz-VEM on the Voronoi-Lloyd meshes for different values of $q$ and fixed wave number $k=64$. The computable quantity~\eqref{computable error} is plotted against the number of degrees of freedom.}
\label{figure numerical results 3}
\end{figure}

\section{Conclusions} \label{section conclusions}
In the present work, we have introduced a nonconforming Trefftz virtual element method based on polygonal grids for the two dimensional Helmholtz problem.
We do not hereby use fully discontinuous trial and test functions, but rather the jumps across interfaces are imposed to zero in a nonconforming sense.
In addition, local spaces are Trefftz, which implies that less degrees of freedom are needed for reaching a certain accuracy, as compared to standard polynomial based methods.
The definition of the \ncTVEM\,requires computable projectors, which map functions in the local approximation spaces into plane wave spaces, and computable stabilizations, which guarantee a discrete G\r arding inequality.
Importantly, only degrees of freedom on the mesh interfaces are used.

The construction of nonconforming Trefftz-VE spaces is based on the following strategy, which generalizes that of nonconforming harmonic VE spaces of \cite{conformingHarmonicVEM}, and which we deem can be extended to other (linear) settings:
\begin{enumerate}
\item on each element $\E\in \taun$, one introduces local discrete spaces $\VE$ made of implicitly defined functions in the kernel of the target differential operator and whose traces on each element edge are defined
such that $\VE$ contains a finite dimensional space, say, $W(\E)$, with good approximation properties and whose functions are available in closed form;
\item the degrees of freedom are defined so that the global trial and test spaces $V_\h$ can be built in a nonconforming fashion, and so that 
the best approximation error for functions in $V_\h$ is bounded by the best approximation error for functions in $\Pi_{\E\in \taun} W(\E)$.
\end{enumerate}
We underline that, differently from Trefftz discontinuous methods, the
\ncTVEM\,allows to recover information on the solution over the mesh skeleton, via edge projection operators.
Moreover, differently from standard partition of unity methods, its construction neither requires explicit knowledge of basis functions nor quadrature formulas.
The implementation of the \ncTVEM\,is described in~\cite{TVEM_Helmholtz_num}, together with an extensive discussion of its numerical performance, and a comparison with other methods in the literature.

\section*{Acknowledgements}
The authors have been funded by the Austrian Science Fund (FWF) through the project F 65 (L.M. and I.P.) and the project P 29197-N32 (I.P. and A.P.), and by the Vienna Science and Technology Fund (WWTF) through the project MA14-006 (I.P.).\\
The authors wish to thank Andrea Moiola (University of Pavia) for fruitful discussions on the improvement of the conditioning of plane wave based methods via a local reduction of degrees of freedom.

{\footnotesize
\bibliography{bibliogr}

\begin{thebibliography}{10}

\bibitem{AbramowitzStegun_handbook}
M.~Abramowitz and I.~A. Stegun.
\newblock {\em Handbook of {M}athematical {F}unctions with {F}ormulas,
  {G}raphs, and {M}athematical {T}ables}.
\newblock 1964.

\bibitem{VEM_fullync_biharmonic}
P.~F. Antonietti, G.~Manzini, and M.~Verani.
\newblock The fully nonconforming virtual element method for biharmonic
  problems.
\newblock {\em Math. Models Methods Appl. Sci.}, 28(02):387--407, 2018.

\bibitem{nonconformingVEMbasic}
B.~Ayuso, K.~Lipnikov, and G.~Manzini.
\newblock The nonconforming virtual element method.
\newblock {\em ESAIM Math. Model. Numer. Anal.}, 50(3):879--904, 2016.

\bibitem{BabuskaMelenk_PUMintro}
I.~Babu{\v{s}}ka and J.~M. Melenk.
\newblock The partition of unity finite element method: basic theory and
  applications.
\newblock {\em Comput. Methods Appl. Mech. Engrg.}, 139(1-4):289--314, 1996.

\bibitem{banuelos1994brownian}
R.~Ba{\~n}uelos and T.~Carroll.
\newblock Brownian motion and the fundamental frequency of a drum.
\newblock {\em Duke Math. J.}, 75(3):575--602, 1994.

\bibitem{VEMvolley}
L.~Beir{\~a}o~da Veiga, F.~Brezzi, A.~Cangiani, G.~Manzini, L.D. Marini, and
  A.~Russo.
\newblock Basic principles of virtual element methods.
\newblock {\em Math. Models Methods Appl. Sci.}, 23(01):199--214, 2013.

\bibitem{hitchhikersguideVEM}
L.~Beir{\~a}o~da Veiga, F.~Brezzi, L.D. Marini, and A.~Russo.
\newblock The hitchhiker's guide to the virtual element method.
\newblock {\em Math. Models Methods Appl. Sci.}, 24(8):1541--1573, 2014.

\bibitem{hpVEMbasic}
L.~Beir{\~a}o~da Veiga, A.~Chernov, L.~Mascotto, and A.~Russo.
\newblock Basic principles of $hp$ virtual elements on quasiuniform meshes.
\newblock {\em Math. Models Methods Appl. Sci.}, 26(8):1567--1598, 2016.

\bibitem{VEM3Dbasic}
L.~Beir{\~a}o~da Veiga, F.~Dassi, and A.~Russo.
\newblock High-order virtual element method on polyhedral meshes.
\newblock {\em Comput. Math. Appl.}, 74(5):1110--1122, 2017.

\bibitem{BLV_StokesVEMdivergencefree}
L.~Beir{\~a}o~da Veiga, C.~Lovadina, and G.~Vacca.
\newblock Divergence free virtual elements for the {S}tokes problem on
  polygonal meshes.
\newblock {\em ESAIM Math. Model. Numer. Anal.}, 51(2):509--535, 2017.

\bibitem{Berrone-VEM}
M.F. Benedetto, S.~Berrone, S.~Pieraccini, and S.~Scial\`o.
\newblock The virtual element method for discrete fracture network simulations.
\newblock {\em Comput. Meth. Appl. Mech. Engrg.}, 280:135--156, 2014.

\bibitem{bramble1962bounds}
J.~Bramble and L.~Payne.
\newblock Bounds in the {N}eumann problem for second order uniformly elliptic
  operators.
\newblock {\em Pac. J. Math.}, 12(3):823--833, 1962.

\bibitem{brenner2003poincare}
S.~C. Brenner.
\newblock Poincar{\'e}--{F}riedrichs {i}nequalities for {p}iecewise ${H}^1$
  {f}unctions.
\newblock {\em SIAM J. Numer. Anal.}, 41(1):306--324, 2003.

\bibitem{BrennerSung_VEMsmallfaces}
S.~C. Brenner and L.-Y.. Sung.
\newblock Virtual element methods on meshes with small edges or faces.
\newblock {\em Math. Models Methods Appl. Sci.}, 268(07):1291--1336, 2018.

\bibitem{CGM_nonconformingStokes}
A.~Cangiani, V.~Gyrya, and G.~Manzini.
\newblock The nonconforming virtual element method for the {S}tokes equations.
\newblock {\em SIAM J. Numer. Anal.}, 54(6):3411--3435, 2016.

\bibitem{cangianimanzinisutton_VEMconformingandnonconforming}
A.~Cangiani, G.~Manzini, and O.~J. Sutton.
\newblock Conforming and nonconforming virtual element methods for elliptic
  problems.
\newblock {\em IMA J. Numer. Anal.}, 37(3):1317--1354, 2016.

\bibitem{cessenatdespres_basic}
O.~Cessenat and B.~Despres.
\newblock Application of an ultra weak variational formulation of elliptic
  {PDE}s to the two-dimensional {H}elmholtz problem.
\newblock {\em SIAM J. Numer. Anal.}, 35(1):255--299, 1998.

\bibitem{conformingHarmonicVEM}
A.~Chernov and L.~Mascotto.
\newblock The harmonic virtual element method: stabilization and exponential
  convergence for the {L}aplace problem on polygonal domains, 2018.
\newblock doi: \url{https://doi.org/10.1093/imanum/dry038}.

\bibitem{wavebasedmethod_overview}
E.~Deckers, O.~Atak, L.~Coox, R.~D'Amico, H.~Devriendt, S.~Jonckheere, K.~Koo,
  B.~Pluymers, D.~Vandepitte, and W.~Desmet.
\newblock The wave based method: {A}n overview of 15 years of research.
\newblock {\em Wave Motion}, 51(4):550--565, 2014.

\bibitem{smoothing_polygons}
C.~L. Epstein and M.~O'Neil.
\newblock Smoothed corners and scattered waves.
\newblock {\em SIAM J. Sci. Comput.}, 38(5):A2665--A2698, 2016.

\bibitem{evansPDE}
L.~C. Evans.
\newblock {\em Partial {D}ifferential {E}quations}.
\newblock American {M}athematical {S}ociety, 2010.

\bibitem{farhat2001discontinuous}
C.~Farhat, I.~Harari, and L.~P. Franca.
\newblock The discontinuous enrichment method.
\newblock {\em Comput. Methods Appl. Mech. Engrg.}, 190(48):6455--6479, 2001.

\bibitem{gagliardo1957}
E.~Gagliardo.
\newblock Caratterizzazioni delle tracce sulla frontiera relative ad alcune
  classi di funzioni in n variabili.
\newblock {\em Rend. Sem. Mat. Univ. Padova}, 27(405):284--305, 1957.

\bibitem{gardini2018nonconforming}
F.~Gardini, G.~Manzini, and G.~Vacca.
\newblock The nonconforming virtual element method for eigenvalue problems.
\newblock \url{http://arxiv.org/abs/1802.02942}, 2018.

\bibitem{GHP_PWDGFEM_hversion}
C.~J. Gittelson, R.~Hiptmair, and I.~Perugia.
\newblock Plane wave discontinuous {G}alerkin methods: analysis of the
  $h$-version.
\newblock {\em ESAIM Math. Model. Numer. Anal.}, 43(2):297--331, 2009.

\bibitem{moiola2009approximation}
R.~Hiptmair, A.~Moiola, and I~Perugia.
\newblock Plane wave approximation of homogeneous {H}elmholtz solutions.
\newblock {\em Z. Angew. Math. Phys.}, 62(5):809, 2011.

\bibitem{TDGPW_pversion}
R.~Hiptmair, A.~Moiola, and I.~Perugia.
\newblock Plane wave discontinuous {G}alerkin methods for the 2{D} {H}elmholtz
  equation: analysis of the $p$-version.
\newblock {\em SIAM J. Numer. Anal.}, 49(1):264--284, 2011.

\bibitem{PWDE_survey}
R.~Hiptmair, A.~Moiola, and I.~Perugia.
\newblock A survey of {T}refftz methods for the {H}elmholtz equation.
\newblock In {\em Building bridges: connections and challenges in modern
  approaches to numerical partial differential equations}, pages 237--279.
  Springer, 2016.

\bibitem{ladeveze1978bounds}
J.~Ladev\`{e}ze and P.~Ladev\`{e}ze.
\newblock Bounds of the {P}oincar{\'e} constant with respect to the problem of
  star-shaped membrane regions.
\newblock {\em Z. Angew. Math. Phys.}, 29(4):670--683, 1978.

\bibitem{nc_VEM_NavierStokes}
X.~Liu and Z.~Chen.
\newblock The nonconforming virtual element method for the {N}avier-{S}tokes
  equations.
\newblock {\em Adv. Comput. Math.}, 2018.

\bibitem{MascottoPhDthesis}
L.~Mascotto.
\newblock {\em The $hp$ version of the {V}irtual {E}lement {M}ethod}.
\newblock PhD thesis, 2018.

\bibitem{fetishVEM}
L.~Mascotto.
\newblock Ill-conditioning in the virtual element method: stabilizations and
  bases.
\newblock {\em Numer. Methods Partial Differential Equations},
  34(4):1258--1281, 2018.

\bibitem{hpHarmonicVEMnc}
L.~Mascotto, I.~Perugia, and A.~Pichler.
\newblock Non-conforming harmonic virtual element method: $h$- and
  $p$-versions, 2018.
\newblock \url{https://doi.org/10.1007/s10915-018-0797-4}.

\bibitem{TVEM_Helmholtz_num}
L.~Mascotto, I.~Perugia, and A.~Pichler.
\newblock A nonconforming {T}refftz virtual element method for the {H}elmholtz
  problem: numerical aspects.
\newblock \url{https://arxiv.org/abs/1807.11237}, 2018.

\bibitem{mclean2000strongly}
W.~C.~H. McLean.
\newblock {\em Strongly {E}lliptic {S}ystems and {B}oundary {I}ntegral
  {E}quations}.
\newblock Cambridge {U}niversity {P}ress, 2000.

\bibitem{melenk_phdthesis}
M.~Melenk.
\newblock {\em On generalized {F}inite {E}lement {M}ethods}.
\newblock PhD thesis, University of Maryland, 1995.

\bibitem{MoiolaPhDthesis}
A.~Moiola.
\newblock {\em Trefftz-discontinuous {G}alerkin methods for time-harmonic wave
  problems}.
\newblock PhD thesis, ETH Z\"{u}rich, 2011.

\bibitem{monk1999least}
P.~Monk and D.-Q. Wang.
\newblock A least-squares method for the {H}elmholtz equation.
\newblock {\em Comput. Methods Appl. Mech. Engrg.}, 175(1-2):121--136, 1999.

\bibitem{VEMchileans}
D.~Mora, G.~Rivera, and R.~Rodr{\'i}guez.
\newblock A virtual element method for the {S}teklov eigenvalue problem.
\newblock {\em Math. Models Methods Appl. Sci.}, 25(08):1421--1445, 2015.

\bibitem{payne1960optimal}
L.~E. Payne and H.~F. Weinberger.
\newblock An optimal {P}oincar{\'e} inequality for convex domains.
\newblock {\em Arch. Ration. Mech. Anal.}, 5(1):286--292, 1960.

\bibitem{Helmholtz-VEM}
I.~Perugia, P.~Pietra, and A.~Russo.
\newblock A plane wave virtual element method for the {H}elmholtz problem.
\newblock {\em ESAIM Math. Model. Numer. Anal.}, 50(3):783--808, 2016.

\bibitem{riou2008multiscale}
H.~Riou, P.~Ladeveze, and B.~Sourcis.
\newblock The multiscale {VTCR} approach applied to acoustics problems.
\newblock {\em J. Comput. Acoust.}, 16(04):487--505, 2008.

\bibitem{schatz1974observation}
A.~H. Schatz.
\newblock An observation concerning {R}itz-{G}alerkin methods with indefinite
  bilinear forms.
\newblock {\em Math. Comp.}, 28(128):959--962, 1974.

\bibitem{paulinotestnumericipolygonalmeshes}
C.~Talischi, G.~H. Paulino, A.~Pereira, and I.~F.~M. Menezes.
\newblock Poly{M}esher: a general-purpose mesh generator for polygonal elements
  written in {M}atlab.
\newblock {\em Struct. Multidiscip. Optim.}, 45(3):309--328, 2012.

\bibitem{Triebel}
H.~Triebel.
\newblock {\em Interpolation theory, function spaces, differential operators}.
\newblock North-Holland, 1978.

\bibitem{vacca2018h}
G.~Vacca.
\newblock An ${H}^1$-conforming virtual element for {D}arcy and {B}rinkman
  equations.
\newblock {\em Math. Models Methods Appl. Sci.}, 28(01):159--194, 2018.

\bibitem{zhao2016nonconforming}
J.~Zhao, S.~Chen, and B.~Zhang.
\newblock The nonconforming virtual element method for plate bending problems.
\newblock {\em Math. Models Methods Appl. Sci.}, 26(09):1671--1687, 2016.

\end{thebibliography}
}
\bibliographystyle{plain}

\end{document}